\theoremstyle{plain}
\newtheorem{thm}{Theorem}[section]
\theoremstyle{plain}
\newtheorem{lem}[thm]{Lemma}
\newtheorem{prop}[thm]{Proposition}
\theoremstyle{definition}
\newtheorem{defi}{Definition}[section]
\newtheorem{rem}{Remark}[section]
\newtheorem*{maintheorem*}{Main Theorem}
\newenvironment{Assumptions}
{
\setcounter{enumi}{0}

\begin{enumerate}}
{\end{enumerate} }
\newcommand{\R}{\ensuremath{\mathbb{R}}}
\newcommand{\goto}{\ensuremath{\rightarrow}}
\newcommand{\eps}{\ensuremath{\varepsilon}}
\numberwithin{equation}{section} \allowdisplaybreaks
\title[Dirichlet problem for stochastic degenerate parabolic-hyperbolic equation]{Homogeneous Dirichlet problem for degenerate parabolic-hyperbolic PDE driven by L\'evy noise}
\date{}
\author[S. R. Behera]{Soumya Ranjan Behera}
\address[Soumya Ranjan Behera] {\newline 
Department of Mathematics,
Indian Institute of Technology Delhi,
Hauz Khas, New Delhi, 110016, India.}
\email[] {maz198759@iitd.ac.in}
\author[A. K. Majee]{Ananta K. Majee}
\address[Ananta K. Majee]{\newline
Department of Mathematics,
Indian Institute of Technology Delhi,
Hauz Khas, New Delhi, 110016, India. }
\email[]{majee@maths.iitd.ac.in}
\keywords{Stochastic degenerate parabolic-hyperbolic equation; Dirichlet problem; Entropy solution; Doubling variables method; Young measure.}
\thanks{}
\begin{document}
\begin{abstract}
In this article, we study the homogeneous Dirichlet problem for a degenerate parabolic-hyperbolic PDE perturbed by L\'evy noise. In particular, we develop the well-posedness theory of entropy solution based on the Kru\v{z}kov's semi-entropy formulation. In comparison to the pioneered work by Bauzet et al. \cite[J. Funct. Anal. 266, (2014), 2503-2545]{Bauzet-2014}, concerning the existence and uniqueness of entropy solution for the Dirichlet problem for conservation laws driven by Brownian noise, our present analysis involves a simpler approach to obtain the global Kato's inequality.
\end{abstract}
\maketitle
\section{Introduction}
Let $(\Omega, \mathbb{P}, \mathcal{F}, \{\mathcal{F}_t\}_{t \ge 0})$ be a filtered probability space satisfying the usual hypothesis.
We consider the following non-linear degenerate stochastic partial differential equation:
\begin{equation}\label{eq:degenerate-SPDE}
    \begin{cases}
        \displaystyle du - \text{div}(F(u))\,dt - \Delta \Phi(u)\,dt  = \phi(u)\,dW(t) + \int_{E} \nu(u;z)\widetilde{N}(dz,dt), \quad \text{in}  \quad \Pi, \\
        u(0,x) = u_0 \quad \text{in}\quad  D \quad \text{and} \quad 
        u = 0 \quad \text{on} \quad \Sigma\,,
    \end{cases}
\end{equation}
where $D \subset \R^d$ is a bounded domain with Lipschitz boundary $\partial D$ if $d \ge 2,$ $T > 0$ is fixed, $\Pi = (0,T) \times D$, and $\Sigma = (0,T) \times \partial D$. In \eqref{eq:degenerate-SPDE}, $F: \R \rightarrow \R^d$ is a given flux function, $W(t)$ is a $\{\mathcal{F}_t\}_{t \ge 0}$-adapted one-dimensional continuous Brownian noise, and $\widetilde{N}(dz,dt) = N(dz,dt) - m(dz)dt$, where $N$ is a Poisson random measure on a $\sigma$-finite measure space $(E, \mathcal{E},m)$. Moreover, the noise coefficients $\phi(u)$ and $\nu(u;z)$ are real-valued functions defined on the domain $\R$ and $\R \times E$, respectively. Furthermore, $\Phi: \R \rightarrow \R$ is a given non-decreasing Lipschitz continuous function. The stochastic integral in the right-hand side of \eqref{eq:degenerate-SPDE} is defined in the It\^o-L\'evy sense. 
\subsection{Review of existing literature} Equations of type \eqref{eq:degenerate-SPDE} describe the convection-diffusion of ideal fluids, making them essential in various applications. For instance, \eqref{eq:degenerate-SPDE} is used to model the phenomenon of two or three-phase flows in porous media \cite{Karlsen-2000-LN} or sedimentation-consolidation processes \cite{Burger-1999}. Incorporating stochastic noise into the above physical model is entirely natural as it accounts for ambiguities surrounding particular physical quantities or external disturbances. In the absence of noise terms, \eqref{eq:degenerate-SPDE} becomes a deterministic degenerate parabolic–hyperbolic equation in a bounded domain. A number of authors have contributed since then, and we mention few of the works e.g, \cite{Otto-1996,Ammar-2006, Porretta-2003, Carrillo-1999, Mascia-2002,Sbihi-2015,Frid-2017} and references therein. The notion of entropy solution for homogeneous Dirichlet problem for deterministic conservation laws was first introduced by Otto in \cite{Otto-1996}, where he studied the problem in $L^\infty$-framework. The well-posedness analysis of entropy solution for non-homogeneous boundary condition was established by Carrillo et al. \cite{Ammar-2006}, whereas  Porretta and Vovelle \cite{Porretta-2003} delved into the problem in $L^1$-setting. The existence and uniqueness of entropy solution for non-linear degenerate PDE with homogeneous Dirichlet boundary condition was addressed by Carrillo \cite{Carrillo-1999} while the problem with non-homogeneous boundary condition was investigated in \cite{Mascia-2002}. We also refer to the works of Vallet in \cite{Vallet-2000} and \cite{Vallet-2005}, where the existence and uniqueness of the 
 measure-valued solution was established in the bounded domain. The well-posedness analysis for entropy solution of scalar conservation laws and anisotropic degenerate parabolic-hyperbolic PDE under mixed Dirichlet-Neumann boundary condition were studied in \cite{Sbihi-2015} and \cite{Frid-2017}, respectively. For the well-posedness analysis in the unbounded domain, we mention the works of 
\cite{dafermos, kruzkov, Volpert, Lions-1994} for scalar conservation laws and \cite{Maliki-2010, Chen-2005, 
Chen-2003, Hudjaev-1969} and their associated references for degenerate parabolic-hyperbolic PDEs.

\vspace{0.1cm}
 In the last decade, the study of conservation laws with stochastic forcing has attracted the attention of many researchers. In fact, Kim \cite{Kim} extended the Kru\v{z}kov well-posedness theory to one-dimensional balance laws driven by additive Brownian noise. However, one cannot directly apply the straightforward Kru\v{z}kov doubling variables technique to get uniqueness in case of multiplicative noise. Feng and Naurlart \cite{nualart:2008} addressed this issue by introducing the notion of strong stochastic entropy solution. They established the uniqueness of entropy solutions by recovering the additional information from the vanishing viscosity method, and existence was obtained by using  stochastic version of the compensated compactness method in one spatial dimension; see also Biswas et al. \cite{Majee-2014}. In \cite{Bauzet-2012}, Bauzet et al. introduced the notion of generalized entropy solution and established the well-posedness theory of entropy solution for multi-dimensional balance laws driven by Brownian noise. We refer to Chen et al. \cite{Chen:2012fk}, where the well-posedness of entropy solution was obtained in $L^p \cap BV$, via BV-framework. In \cite{Karlsen-2017}, the authors proved the well-posedness of entropy solution for stochastic hyperbolic conservation laws using Malliavin calculus---which simplifies some of the proofs of \cite{nualart:2008, Bauzet-2012}. Debusssche and Vovelle \cite{Vovelle2010} investigated the balance laws driven by Brownian noise in a multidimensional torus using the kinetic formulation framework; see also \cite{Vovelle-2018}. As a generalization of \cite{Bauzet-2012}, Bauzet et al. in \cite{Bauzet-2015} established the existence and uniqueness of entropy solution of \eqref{eq:degenerate-SPDE} in unbounded domain with $\nu = 0$ while Biswas et al. in \cite{Majee-2019} studied the problem with $\phi = 0$. Debusssche et al. \cite{Hofmanova-2016} 
established the well-posedness theory of kinetic solution for quasilinear degenerate parabolic PDEs driven by the Wiener process in periodic boundary condition. Very recently, Behera et al. \cite{SRB-2023} have established the well-posedness theory for renormalized entropy solution of degenerate parabolic-hyperbolic PDE perturbed by a multiplicative L\'evy noise with general $L^1$-data on $\R^d$.

\vspace{0.1cm}
The study of stochastic conservation laws in a bounded domain is sparser than those in the whole space. Bauzet et al. \cite{Bauzet-2014} established the existence and uniqueness of entropy solution for \eqref{eq:degenerate-SPDE} with $\Phi =0 =\nu$; see also \cite{Vallet-2009} for additive noise case. Recently, Kobayasi and Noboriguchi \cite{Kobayasi-2016} studied the stochastic balance laws with non-homogeneous Dirichlet boundary condition in a kinetic formulation framework. Very recently, the authors have developed the well-posedness theory for the kinetic solution of quasilinear degenerate parabolic-hyperbolic PDE driven by Brownian noise under non-homogeneous Dirichlet and mixed 
Dirichlet-Neumann boundary condition in \cite{Frid-2022} and \cite{Frid-2022a}, respectively.
\subsection{Aim and outline of the paper}
The entropy solution framework for the homogeneous Dirichlet problem for degenerate parabolic-hyperbolic PDE driven by L\'evy noise is addressed here for the first time. We introduce the definition of entropy solution for \eqref{eq:degenerate-SPDE} and prove the existence and uniqueness of entropy solution. As usual, the existence is obtained via classical viscosity arguments. Due to the degeneracy in the equation, one needs to get the point-wise convergence of the sequence of viscous solutions. To do so, we first prove the uniqueness of Young measure-valued limit processes of viscous solutions by using an appropriate version of classical Kru\v{z}kov's doubling the variables method in a bounded domain. In fact, the main arguments are based on obtaining the local and global Kato's inequalities, allowing comparison principle in $L^1$. To sum up the technicalities of the present article, we have:
\begin{itemize}
    \item [i)] Since the point-wise convergence of viscous approximations is needed, 
we compare any two weak solutions coming from artificial viscosity along with the Kru\v{z}kov's doubling the variables technique in a stochastic setting. %---which is motivated by the same issue of regularity, measurability, and control of stochastic integral imposed by stochastic perturbation.
Moreover, due to the lack of $H^2$-regularity of the viscous solution, one can't obtain the Kato's inequality (cf.~term $\mathcal{H}_6$ in Lemma \ref{lem:G7-H6}); we need to regularize the viscous solution. Since we are working in an arbitrary bounded domain, the regularization by the usual mollifier is not an admissible choice. We regularize the viscous solution with a modified version of space convolution; see \eqref{eq:mollification}. 
    \item [ii)] To obtain the global Kato's inequality, Bauzet et al. \cite{Bauzet-2014}  have used the following identity: 
    \begin{align*}
        ({\tt v} -{\tt u}^+)^+ + (-{\tt v} - {\tt u}^-)^+ = ({\tt v}- {\tt u})^+,
    \end{align*}
    where ${\tt u}$ and ${\tt v}$ are measure-valued entropy solution of the underlying problem. First, they estimated global Kato's inequality with respect to $({\tt v} -{\tt u}^+)$, and in the second step, global Kato's inequality for $(-{\tt v} - {\tt u}^-)$. To be more specific, in the first step, they used the following inequality to handle the terms related to the Laplacian of viscous solution.
    \begin{align}
        \Delta u j^\prime(u) \le \Delta j(u)\quad \text{in} \quad \mathcal{D}^\prime \quad \text{for} \quad j(s) = (k^+-s)^+, \label{inq:j-kato}
    \end{align}
    for any $k, s \in \R$. The presence of degenerate term $\Phi$ increases the technical difficulties if one tries to apply the method of \cite{Bauzet-2014}. However, our analysis is based on the identity
    \begin{align*}
        ({\tt v}^+ -{\tt u}^+)^+ + ({\tt u}^- - {\tt v}^-)^+ = ({\tt v}- {\tt u})^+, 
    \end{align*}
    in which we are able to handle the terms related to the Laplacian of viscous solution in a simpler manner without using \eqref{inq:j-kato} ~(see Lemma \ref{lem:G-11-H-10}). Needless to say, the change in our setup, as compared to Bauzet et al. \cite{Bauzet-2014}, forces us to revisit all the terms involved in the entropy inequalities generated from the doubling the variables method.
    \item[iii)] Like in deterministic case, we require a special function $\beta$ (cf.~Section \ref{sec:Technical Framework }) approximating the semi-Kru\v{z}kov entropy $x \mapsto x^+$. Since $\beta$ is asymmetric, to handle the terms associated with the degenerate term $\Phi$, the standard arguments, as done in the unbounded domain (see, \cite{Bauzet-2015} and \cite{Majee-2019}), will not be applicable. We use a different technique to tackle the degenerate term---which is discussed in detail in the proof of \eqref{eq:liminf-hat-G1}; see also Lemma \ref{lem:to-deal-with-degnerate-term}.
   \item[iv)] Using the partition of unity, Global Kato's inequality along with the local Kato's inequality and appropriate test function, we deduce the strong convergence of viscous approximations in $L^p_{\rm loc}(\Omega\times \Pi)$ for any $1\le p<2$---which then enable us to prove the existence of a unique entropy solution of the underlying problem. 
\end{itemize}

The remaining part of the article is structured as follows. We describe the technical framework and state the definition of entropy solution, the assumptions, and the main result in Section \ref{sec:Technical Framework }. Section \ref{sec:Well-posedness of Entropy Solution} is devoted to the proof of global Kato's inequality and the existence and uniqueness of entropy solution.

 %%%%%%%%%%%%%%%%%%%%%%%%%%%%%%%%%%%%%%%%%%%%%
\section{Technical Framework and Statement of The Main Result}\label{sec:Technical Framework }
In this article, we use the letter $C$ to denote various generic constants. We denote by $N_\omega^2(0,T, L^2(D))$ the space of predictable $L^2(D)$-valued processes. Let $\mathcal{D}(D)$ be the space of $C^\infty(D)$-functions with compact support in $D$. Then $\mathcal{D}^+(D)$ denotes the subset of non-negative elements of $\mathcal{D}(D).$ Let $\mathcal{M}^+$ be the set of non-negative convex functions $\beta$ in $C^{2,1}(\R)$, approximating the semi-Kru\v{z}kov entropies $x \mapsto x^+$ such that $\beta(x) = 0$ if $x \le 0$ and there exists $\xi > 0$ such that $\beta'(x) = 1$ if $ x > \xi$. Note that for any $\beta\in \mathcal{M}^+$, $\beta''$ has a compact support, and $\beta$ and $\beta'$ are Lipschitz-continuous functions. A typical element $\beta_\xi$ of $\mathcal{M}^+$ is given by
\begin{equation}
    \beta_{\xi}(0) = 0 \quad \text{and} \quad \beta_\xi'(r) = 
    \begin{cases}
     1  \quad &\text{if}~~ r > \xi, \\
     \frac{1 + \sin(\frac{\pi}{2\xi}(2r - \xi))}{2} \quad &\text{if}~~ 0 \le r \le \xi, \\
     0 &\text{if}~~ r < 0.
    \end{cases} 
    \notag
\end{equation}
Set $\mathcal{M}^-:=\{\Breve{\beta}(\cdot) := \beta(-\cdot), \beta \in \mathcal{M}^+\}$ and for the definition of entropy inequality, we denote
\begin{align*}
    &\mathcal{A}^{+}:= \big\{(k, \psi, \beta) \in \R \times \mathcal{D}^+(\R^{d+1}) \times \mathcal{M}^+, k < 0 \Rightarrow \psi \in \mathcal{D}^+([0,T] \times D)\big\}\,, \\
    &\mathcal{A}^{-}:= \big\{(k, \psi, \beta):  (-k, \psi, \Breve{\beta}) \in \mathcal{A}^+\big\} \quad \text{and} \quad \mathcal{A} = \mathcal{A}^{+} \cup \mathcal{A}^{-}. 
\end{align*}
Observe that, 
\begin{align}\label{inq:for-beta-xi}
    \big|\beta_\xi(r) - r^+\big| \le  C\xi, \quad \beta^\prime_{\xi}(\cdot) \le 1, \quad \text{and} \quad \beta_{\xi}^{\prime\prime}(r) \le \frac{C}{\xi}{\tt 1}_{\{|r| \le \xi\}}.
\end{align}
Let us denote 
\begin{align*}
    &\text{sgn}^{+}(x) = 
    \begin{cases}
    1 &\text{if}~~ x > 0\\
    0 &\text{else}
    \end{cases}\,, \quad   \text{sgn}^{-}(x) = 
    \begin{cases}
    1  &\text{if}~~ x < 0 \\
    0 &\text{else}
    \end{cases}\,, \quad   \text{sgn}(x) = \text{sgn}^{+}(x) + \text{sgn}^{-}(x).\\
    & F(a,b) = \text{sgn}(a-b)\big[F(a) - F(b)\big]; \quad F^{\pm}(a,b) = \text{sgn}^{\pm}(a-b)\big[F(a) - F(b)\big]\,,\\
     & \Phi(a,b) = \text{sgn}(a-b)\big[\Phi(a) - \Phi(b)\big]; \quad \Phi^{\pm}(a,b) = \text{sgn}^{\pm}(a-b)\big[\Phi(a) - \Phi(b)\big]\,,\\
    & F^\beta(a,b) = \int_{b}^a\beta'(r-b)F'(r)\,dr\,, \quad \Phi^\beta(a,b) = \int_b^a\beta'(r-b)\Phi'(r)\,dr\, \quad \text{for any} \quad \beta \in \mathcal{A}^+ \cup \mathcal{A}^-.
\end{align*}
For technical reasons, we introduce a slightly modified version of convolution that can be used to define a valid mollification technique up to the boundary for any bounded domain in $\R^d$ satisfying the uniform cone condition. For the definition of uniform cone condition, we refer to see \cite[Definition 2.1]{Blouza-SIAM-2001}, \cite{Girault-1999} and the references therein. 
\begin{rem}
    A bounded domain of $\R^d$ has a Lipschitz boundary if and only if it satisfies the uniform cone condition; see \cite[Theorem
1.2.2.2]{Grisvard-1985}.
\end{rem}
Let $\rho$ be a standard mollifier in $\R^d$. We may assume that $D \subset \R^d$ satisfies the uniform cone condition with just one cone equal to $\mathcal{C}$. Let $\theta_\mathcal{C}, e_\mathcal{C}$, and $h_\mathcal{C}$ be the cone's angle, outward unit axis vector, and height of the cone, respectively. For $\kappa > 0$, we define
\begin{align*}
    \eta(\kappa):= \kappa\sin\Big(\frac{\theta_\mathcal{C}}{2}\Big), \quad and \quad \rho_\kappa(y):= \frac{1}{\big(\eta(\kappa)\big)^d}\rho\Big(\frac{y}{\eta(\kappa)}\Big).
\end{align*}
Let $\kappa_\mathcal{C}:= \frac{h_\mathcal{C}}{1 + \sin\big(\frac{\theta_\mathcal{C}}{2}\big)}$ and ${\tt f} \in L^p(D)$. For all $0 < \kappa < \kappa_\mathcal{C}$ and for all $x \in \Bar{D}$, we define
\begin{align}\label{eq:mollification}
    {\tt f}^\kappa(x)= \int_{B(0, \eta(\kappa))}{\tt f}(x-\kappa e -y)\rho_\kappa(y)\,dy := \widetilde{{\tt f}} \ast_{\kappa, e} \rho_\kappa,
\end{align}
where $e$ be the unit vector in $\R^d$. In \eqref{eq:mollification}, ${\tt f}^\kappa = \widetilde{{\tt f}} \ast_{\kappa,e}  \rho_\kappa\big|_{\Bar{D}}$, where $\widetilde{{\tt f}}$ is the extension of ${\tt f}$ by $0$ to $\R^n$.  By uniform cone property, $x-\kappa e - B(0, \eta(\kappa)) \subset x + \mathcal{C} \subset D.$ Thus, ${\tt f}^\kappa(\cdot)$ is well-defined up-to boundary and ${\tt f}^\kappa \in C^\infty(\Bar{D}).$ Using similar lines of argument that are used in standard mollification by approximating ${\tt f}$ by a compactly supported mollifier and performing convolution translation on ${\tt f}$, one can obtain ${\tt f}^\kappa \rightarrow {\tt f}$ a.e. in D and for $1 \le p < \infty$, ${\tt f}^\kappa \rightarrow {\tt f}$ in $L^p(D)$ as $\kappa \rightarrow 0$, see \cite[Theorem 2.4]{Blouza-SIAM-2001} and \cite{Girault-1999} for thorough proofs. From now on-wards, we denote $ \widetilde{{\tt f}} \ast_{\kappa,e}  \rho_\kappa$ by ${\tt f} \ast \rho_\kappa$ for simplicity of notation.
\subsection{Entropy inequality and well-posedness result}
We aim to present the notion of entropy inequality, the properties satisfied by such a solution, and the main result of this paper. Replacing the deterministic chain rule with the It\^o-L\'evy chain rule is essential in our setup to achieve the entropy inequality. With \eqref{eq:mollification} in hand, one can invoke a similar argument as done in \cite[Theorem A.1]{Majee-2019} under cosmetic changes to obtain the generalized version of It\^o-l\'evy formula in a bounded domain. We omit the details and directly define the stochastic entropy solution for \eqref{eq:degenerate-SPDE}, the proof of which is very similar to the explanations provided in \cite[Section 2]{Bauzet-2015} and \cite[Section 2.1]{Majee-2019} and can be directly applied in a bounded domain. 
\begin{defi}\label{def-1}(Stochastic Entropy Solution) A stochastic process $u \in N_\omega^2(0,T, L^2(D))$ is said to be a stochastic entropy solution of \eqref{eq:degenerate-SPDE} if the following conditions hold:
\begin{itemize}
 \item [i)] For each $ T>0$, $ \underset{0\leq t\leq T}\sup\, \mathbb{E}\Big[\|u(t, \cdot)\|_{L^2(D)}^{2} \Big] < \infty$ and $G(u)\in L^2(\Omega\times(0,T); H^1(D))$, where $G(\cdot)$ is the associated Kirchoff's function of the given nonlinear diffusion $\Phi$, defined by $$G(x)=\int_0^x \sqrt{\Phi^\prime(r)}\,dr.$$
 \item[ii)] For all $(k, \psi, \beta) \in \mathcal{A}$,\quad  $\Lambda(k,\psi,\beta) \ge 0$\quad$\mathbb{P}$-a.s., where
\begin{align*}
\Lambda(k,\psi,\beta) &:= \int_{\Pi} \big\{ \beta(u-k)\partial_t \psi
-F^\beta(u,k)\nabla\psi + \Phi^\beta(u,k)\Delta\psi\big\}\,dx\,dt \notag \\ 
&+\int_{\Pi}\phi(u)\beta'(u-k)\psi\,dx\,dW(t)+
   \frac{1}{2} \int_{\Pi} \phi^2(u)\beta''(u-k)\psi\,dx\,dt \notag \\
 &+ \int_{\Pi}\int_{E}\big\{\beta\big(u-k +\nu(u;z)\big)- \beta(u-k)\big\}\psi\,\widetilde{N}(dz,dt)\,dx \notag \\
 &+  \int_{\Pi}\int_{E} \big\{\beta\big(u-k +\nu(u;z)\big)- \beta(u-k) -\nu(u;z)\beta^\prime(u-k)\big\}\psi\,m(dz)\,dx\,dt \notag \\
&- \int_{\Pi}\beta^{\prime\prime}(u-k)|\nabla G(u)|^2 \psi\,dx\,dt +\int_{D}\beta(u_0-k)\psi(0,x)\,dx\,.
\end{align*}
\end{itemize}
\end{defi}
The main goal of this article is to prove the existence and uniqueness of entropy solution for \eqref{eq:degenerate-SPDE} in the sense of Definition \ref{def-1}, and we need the following assumptions to achieve it.
\begin{Assumptions}
\item \label{A1}  The initial function $u_0$ is a deterministic function satisfying $\|u_0\|_{L^2(D)} < \infty$.
\item \label{A2}$F = (F_1,F_2, \cdot\cdot\cdot,F_d): \R \rightarrow \R^d$ is a Lipschitz continuous function with $F_k(0) =0$ for all $1 \le k \le d$.
\item \label{A3} $\Phi: \R \rightarrow \R$ is a non-decreasing Lipschitz continuous function with $\Phi(0) = 0.$ %Moreover, if $\phi$ and $\nu$ are not constant with respect to space variable $x$, then $t \mapsto \sqrt{\Phi'(t)}$ has a modulus of continuity $\omega_\Phi$ such that $\frac{\omega_\Phi(r)}{r^{\frac{2}{3}}} \rightarrow 0$ as $r \rightarrow 0.$
\item  \label{A4} $\phi(0)  = 0$ and $\phi: \R \rightarrow \R$ is a Lipschitz continuous function.
\item \label{A5} The space $E$ is of the form $\mathcal{O} \times \R^*$, where $\mathcal{O}$ is a subset of Euclidean space. The Borel measure $m$ on $E$ has the form $\mu \times \lambda$, where $\mu$ is the Radon measure on $\mathcal{O}$ and $\lambda$ is one-dimensional L\'evy measure. 
\item \label{A6}$\nu(0;z) = 0$ for all $z \in E$ and $\nu : \R \times E \rightarrow \R$ is a non-decreasing function in $\R$. There exist a positive constant $\lambda^\star$ and a non-negative function $g\in L^2(E,m)$ with $0 \le g(z) \le 1$ such that
\begin{align*}
    &|\nu(u;z)-\nu(v;z)| \le  \lambda^\star|u-v|g(z), \quad \forall\, u,v \in \R,\, z \in E.  
\end{align*}
\end{Assumptions} 
\begin{rem} In case of the unbounded domain, the Lipschitzness of $\nu$ with Lipschitz constant $0<\lambda^\star <1$, was enough to control the error terms associated with jump noise---giving the uniqueness of solutions; see \cite{Majee-2019}. However, in our setup, the asymmetric property of $\beta$ and the non-local nature of the It\^o-L\'evy formula force us to assume $\nu$, in addition to Lipschitz continuity and without any restriction of Lipschitz constant $\lambda^\star$,  to be non-decreasing in $\R$; see the proof of Lemma \ref{lem:G9-H8}.
\end{rem}
We now state the main result of this paper.
\begin{thm}(Existence and Uniqueness) Let the assumptions \ref{A1}-\ref{A6} be true. Then, there exists a unique entropy solution for \eqref{eq:degenerate-SPDE} in the sense of Definition \ref{def-1}. Moreover, if $u_1, u_2$ are entropy solutions of \eqref{eq:degenerate-SPDE} corresponding to the initial data $u_{0_1},u_{0_2} \in L^2(D)$ respectively, then for all $t \in (0,T)$, 
\begin{align*}
    \mathbb{E}\Big[\int_{D} (u_1(t) - u_2(t))^+\,dx\Big] \le \mathbb{E}\Big[\int_{D} (u_{0_1}-u_{0_2})^+\,dx\Big].
\end{align*}
\end{thm}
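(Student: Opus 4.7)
My plan is to follow the classical vanishing-viscosity plus Young-measure scheme, with the comparison machinery as the crux of both existence and uniqueness. I would regularize \eqref{eq:degenerate-SPDE} by adding $\varepsilon\Delta u$, obtaining via standard monotonicity/Galerkin arguments a unique variational solution $u^\varepsilon \in L^2(\Omega; C([0,T]; L^2(D))) \cap L^2(\Omega\times(0,T); H^1_0(D))$. An It\^o-L\'evy expansion of $\tfrac12\|u^\varepsilon\|_{L^2}^2$ yields $\varepsilon$-uniform bounds on $u^\varepsilon$ in $L^\infty(0,T;L^2(\Omega\times D))$ and on $\nabla G(u^\varepsilon)$ in $L^2(\Omega\times\Pi)$. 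Extracting a Young-measure limit along a subsequence produces a measure-valued entropy solution ${\tt u}$ of \eqref{eq:degenerate-SPDE}. Existence then reduces to strong compactness of $\{u^\varepsilon\}$, which will follow from the uniqueness step.

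The heart of the argument is the $L^1$-comparison between two measure-valued entropy solutions ${\tt u}$ and ${\tt v}$, carried out through Kru\v{z}kov's doubling of variables with the three devices sketched in the introduction. I would test the entropy inequality for ${\tt u}$ against a $(s,y)$-regularization of ${\tt v}$ by the modified convolution \eqref{eq:mollification}---the only admissible mollifier up to $\partial D$---and then do the symmetric exchange, with a convex $\beta\in\mathcal{M}^+$. The resulting identity splits into: (i) transport contributions, controlled by the Lipschitz flux; (ii) the degenerate diffusion term $\Phi^\beta$ and the parabolic dissipation $-\beta''|\nabla G|^2$, for which the standard symmetric cancellation of the unbounded-domain case is unavailable because $\beta$ is asymmetric, so a dedicated estimate exploiting the explicit structure of $\beta_\xi$ as $\xi\to 0$ is needed; (iii) the Brownian martingale and It\^o correction, handled by a stochastic Fubini argument; (iv) the Poisson jump terms, where the monotonicity of $\nu$ in $u$ (replacing the hypothesis $\lambda^\star<1$ from the unbounded-domain literature) combined with $\beta'\ge 0$ delivers the correct sign. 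Sending the doubling parameters together with $\xi\to 0$ yields a \emph{local} Kato inequality for $({\tt v}^+-{\tt u}^+)^+$, and the symmetric argument in $\mathcal{M}^-$ one for $({\tt u}^--{\tt v}^-)^+$. Summing the two via the identity $({\tt v}^+-{\tt u}^+)^+ + ({\tt u}^--{\tt v}^-)^+ = ({\tt v}-{\tt u})^+$ gives the global Kato inequality, the Dirichlet condition forcing the boundary traces to vanish, and hence the claimed $L^1$-contraction.

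Applying the comparison with identical data to two Young-measure limits coming from the viscous approximations shows that they coincide and are supported on a single atom; combined with the local Kato inequality and a partition-of-unity/cut-off argument, this upgrades Young-measure convergence to strong convergence $u^\varepsilon\to u$ in $L^p_{\loc}(\Omega\times\Pi)$ for $1\le p<2$, which is exactly what is needed to pass to the limit in the entropy inequality for $u^\varepsilon$ and conclude existence. The step I expect to be hardest is the joint handling in the doubling identity of the degenerate term $\Phi^\beta(u,k)\Delta\psi$ together with the dissipation $-\beta''(u-k)|\nabla G(u)|^2\psi$ when $\beta$ is asymmetric and no standard mollifier is available near $\partial D$: coordinating the convolution \eqref{eq:mollification}, the new decomposition of $({\tt v}-{\tt u})^+$, and the right $H^1$-regularization of the viscous solution so that all remainder terms either cancel or carry the correct sign is where I expect the most delicate bookkeeping.
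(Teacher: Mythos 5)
Your architecture (vanishing viscosity, Young measures, semi-Kru\v{z}kov entropies, the splitting identity \eqref{eq:idenity-1}, monotone $\nu$, partition of unity) matches the paper, but the core of your plan has a genuine gap: you propose to run the doubling of variables between two \emph{measure-valued entropy solutions} ${\tt u}$ and ${\tt v}$, with one of them regularized by \eqref{eq:mollification}. In the stochastic setting this step fails, and it is exactly the obstruction the paper's proof is organized around. Once the time variable is doubled, testing the entropy inequality of one solution (at time $t$) against quantities built from the other solution at times $s>t$ produces stochastic integrands of the form $\phi(u)\beta'(u-k)\psi$ with $k$ replaced by values of the second solution at a later time; these are anticipative, and no stochastic Fubini argument gives them a meaning or makes them vanish. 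Moreover, a Young-measure limit satisfies no equation, so ``regularizing ${\tt v}$'' does not produce an object whose time increments can be expanded. The paper instead compares two sequences of \emph{viscous} solutions $u_\theta$ and $u_\eps$ of \eqref{eq:viscous}: the anticipating multiplier $J_l\big(k-\beta_{\Tilde{\xi}}(u_\theta^\kappa(s,y))\big)$ in the Brownian term $\mathcal{H}_6$ is rewritten as an increment from its adapted value at $s-\frac{2}{n}$, and that increment is expanded via the It\^o--L\'evy formula applied to the \emph{equation} satisfied by $u_\theta^\kappa$. This is also where the modified convolution \eqref{eq:mollification} is genuinely needed: not to mollify a limit object, but to give the viscous solution the $H^2$ regularity that makes the drift $A_\theta$ square integrable (cf.~Lemma \ref{lem:G7-H6} and the estimates \eqref{lem:for-ito-term}). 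The Young-measure limits enter only after all doubling parameters $n,\kappa,l,\Tilde{\xi},\xi$ are removed.

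Two secondary points also diverge from what is actually required. First, the second half of Kato's inequality is not obtained by a ``symmetric exchange'': since only $u_\theta$ carries the convolution regularization and only its equation drives the increment expansion, swapping $u_\theta$ and $u_\eps$ does not reproduce the argument; the paper redoes the doubling with exchanged test-function roles and then substitutes $-u_\eps$, $-u_\theta$ (weak solutions of \eqref{eq:-negative-u-eps} with $\Tilde{F},\Tilde{\Phi},\Tilde{\phi},\Tilde{\nu}$) --- close in spirit to your $\mathcal{M}^-$ idea, but a genuinely different computation. Second, the boundary contribution is not closed by ``traces vanishing'': the leftover functionals $\mathcal{F}_{\tt v}$ and $\hat{\mathcal{F}}_{\tt u}$ are shown to be non-negative linear operators via the entropy inequality at $k=0$ (cf.~\eqref{inq:f-v-psi-non-negative}), and are then absorbed using the local Kato inequality of Remark \ref{rem:local-kato}, the decomposition $\psi=\psi\bar{\rho}_{m'}+(1-\bar{\rho}_{m'})\psi$, and the partition of unity. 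Finally, because the comparison machinery only applies to viscous solutions, uniqueness of an \emph{arbitrary} entropy solution does not follow from a contraction between two entropy solutions: one must compare the given entropy solution against the viscous approximations and pass to the limit, concluding that every entropy solution coincides with the viscous limit.
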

\begin{rem} The above result still holds if we replace the one-dimensional Brownian noise  $W$ in \eqref{eq:degenerate-SPDE} with a cylindrical Wiener process defined in a separable Hilbert space $\mathcal{H}$ with the form $W(t) = \sum_{n \ge 1}\beta_n(t)e_n$, where $(e_n)_{n \ge 1}$ is a complete orthonormal basis in $\mathcal{H}$ and $(\beta_n)_{n \ge 1}$ is a sequence of independent real-valued Brownian motion.
\end{rem}
\section{Well-posedness of Entropy Solution}\label{sec:Well-posedness of Entropy Solution}
In this section, we prove the well-posedness of entropy solution for \eqref{eq:degenerate-SPDE} based on a variant of classical Kru\v{z}kov's doubling of variables technique. To do so, we begin with a technical lemma, which is very useful in dealing with the degenerate term in the subsequent analysis; see also \cite[Lemma 3.4]{Bendahmane-2005}.
\begin{lem}\label{lem:to-deal-with-degnerate-term}
For any $l \in L^\infty_{loc}(\R)$ and $\beta_\xi \in \mathcal{M}^+$, the followings hold:
\begin{itemize}
    \item [{\rm (i)}] Let $b \in \R$ be fixed. Then, for a.e. $a \in \R$, one has
    \begin{align*}
        \underset{\xi \rightarrow 0}{\lim}\int_a^b\beta_\xi^{\prime\prime}(a-\sigma)l(\sigma)\,d\sigma = -sgn^+(a-b)l(a).
    \end{align*}
    \item [{\rm (ii)}]Let $a \in \R$ be given. Then, a.e. $b \in \R$, it holds that
    \begin{align*}
         \underset{\xi \rightarrow 0}{\lim}\int_b^a\beta_\xi^{\prime\prime}(\sigma-b)l(\sigma)\,d\sigma = sgn^+(a-b)l(b).
    \end{align*}
\end{itemize}
\end{lem}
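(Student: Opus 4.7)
The plan is to exploit the fact that, for $\beta_\xi \in \mathcal{M}^+$, the second derivative $\beta_\xi''$ is a non-negative bump supported in $[0,\xi]$ with $\int_0^\xi \beta_\xi''(r)\,dr = \beta_\xi'(\xi) - \beta_\xi'(0) = 1 - 0 = 1$, and satisfying $\beta_\xi''(r) \le C/\xi \, {\tt 1}_{\{0 \le r \le \xi\}}$ by \eqref{inq:for-beta-xi}. Thus $\{\beta_\xi''\}_{\xi>0}$ forms a family of approximations of a \emph{one-sided} Dirac mass concentrated at $0^+$. Since $l \in L^\infty_{\rm loc}(\R)$, the Lebesgue differentiation theorem guarantees that almost every point is a Lebesgue point of $l$, which will provide the pointwise convergence.

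For part (i), I would perform the change of variables $r = a-\sigma$, which gives
\begin{equation*}
\int_a^b \beta_\xi''(a-\sigma)\,l(\sigma)\,d\sigma = -\int_0^{a-b} \beta_\xi''(r)\,l(a-r)\,dr.
\end{equation*}
Now I split into cases. If $a \le b$, then $a-b \le 0$; since $\beta_\xi''$ is supported in $[0,\xi]$, the right-hand side vanishes for every $\xi > 0$ (when $a < b$) or trivially (when $a=b$), which agrees with $-\mathrm{sgn}^+(a-b)\, l(a) = 0$. If $a > b$, then for all $\xi < a-b$ the support is contained in $[0,a-b]$, so the integral equals $-\int_0^\xi \beta_\xi''(r)\,l(a-r)\,dr$. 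Writing
\begin{equation*}
\Bigl|\int_0^\xi \beta_\xi''(r)\,l(a-r)\,dr - l(a)\Bigr| \le \int_0^\xi \beta_\xi''(r)\,|l(a-r)-l(a)|\,dr \le \frac{C}{\xi}\int_0^\xi |l(a-r)-l(a)|\,dr,
\end{equation*}
the last quantity tends to $0$ at every Lebesgue point of $l$, i.e.\ for a.e.\ $a \in \R$. The limit is then $-l(a) = -\mathrm{sgn}^+(a-b)\,l(a)$, as required.

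Part (ii) is handled by the analogous change of variables $r = \sigma - b$, which gives $\int_b^a \beta_\xi''(\sigma - b)\,l(\sigma)\,d\sigma = \int_0^{a-b} \beta_\xi''(r)\,l(b+r)\,dr$. The case $a \le b$ again yields $0$ by support considerations, and for $a > b$ and $\xi < a-b$ the Lebesgue differentiation argument applied to $r \mapsto l(b+r)$ gives convergence to $l(b)$ for a.e.\ $b \in \R$, matching $\mathrm{sgn}^+(a-b)\,l(b)$.

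The only delicate point, and the reason for the "a.e." qualification in the statement, is the use of Lebesgue differentiation: without it one could not pass $l$ through the limit. The asymmetry of the one-sided approximation (mass concentrated on $r \ge 0$) is essential and is what produces $\mathrm{sgn}^+$ rather than $\mathrm{sgn}$; in particular, it is crucial that the sign of $a-b$ determines whether the support of $\beta_\xi''$ lies inside the integration interval. No further estimates are required.
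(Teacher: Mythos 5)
Your proof is correct, and at its core it is the same mechanism the paper uses: view $\beta_\xi''$ as a one-sided approximate identity concentrating at $0^+$, dispose of the case $a\le b$ by support considerations, and obtain the a.e.\ limit from the Lebesgue point theorem. The difference is in execution. The paper's proof is a computation with the explicit typical element of $\mathcal{M}^+$: it substitutes $\beta_\xi''(r)=\frac{\pi}{2\xi}\cos\big(\frac{\pi}{2\xi}(2r-\xi)\big)$ on $[0,\xi]$, changes variables so the kernel becomes $\frac{\pi}{4\xi}\cos\big(\frac{\pi r}{2\xi}\big)$ on $[-\xi,\xi]$ with unit mass, and then runs the Lebesgue point argument on that explicit kernel. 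You never touch the explicit formula; instead you isolate the three abstract properties that make the argument work, namely $\supp(\beta_\xi'')\subset[0,\xi]$, the normalization $\int_0^\xi\beta_\xi''(r)\,dr=\beta_\xi'(\xi)-\beta_\xi'(0)=1$ (which holds for \emph{every} element of $\mathcal{M}^+$ by the definition of the class and continuity of $\beta_\xi'$), and the domination $\beta_\xi''\le C\xi^{-1}{\tt 1}_{[0,\xi]}$ of \eqref{inq:for-beta-xi}. What this buys is a proof valid for any family in $\mathcal{M}^+$ satisfying \eqref{inq:for-beta-xi}, matching the generality in which the lemma is actually stated, whereas the paper's computation is pinned to the sine profile; what it costs is nothing, since the domination by $C/\xi$ is exactly the hypothesis the paper records anyway. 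Your closing remark is also the right one: the $C/\xi$ bound cannot be dropped (non-negativity and unit mass alone do not control the sup of the kernel), and it is precisely what lets the one-sided Lebesgue differentiation theorem produce the a.e.\ convergence.
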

\begin{proof} If $b > a$, it is easy to observe that
 \begin{align*}
        \int_a^b\beta_\xi^{\prime\prime}(a-\sigma)l(\sigma)\,d\sigma = 0 = -sgn^+(a-b)l(a).
    \end{align*}
    Suppose $b < a$. For sufficiently small $\xi > 0$, we have
    \begin{align}
        &\int_a^b\beta_\xi^{\prime\prime}(a-\sigma)l(\sigma)\,d\sigma = -\int_{a-\xi}^a\frac{\pi}{2\xi}\cos\big(\frac{\pi}{2\xi}(2(a-\sigma)-\xi)\big)l(\sigma)\,d\sigma \notag \\
       & = -\int_{-\xi}^{\xi}\frac{\pi}{4\xi}\cos\big(\frac{\pi r}{2\xi}\big)l\big(a - \frac{\xi+r}{2}\big)\,dr = -l(a) + \int_{-\xi}^{\xi}\frac{\pi}{4\xi}\cos\big(\frac{\pi r}{2\xi}\big)\Big(l(a)-l\big(a - \frac{\xi+r}{2}\big)\Big)\,dr\,,\label{equality:sending-xi-0}
    \end{align}
    where we have used the fact that $\int_{-\xi}^{\xi}\frac{\pi}{4\xi}\cos\big(\frac{\pi r}{2\xi}\big)\,dr = 1$.
    Thanks to the Lebesgue point theorem,
    \begin{align}\label{xi-0}
        &\Big|\int_{-\xi}^{\xi}\frac{\pi}{4\xi}\cos\big(\frac{\pi r}{2\xi}\big)\Big(l(a)-l\big(a - \frac{\xi+r}{2}\big)\Big)\,dr\Big| \le \frac{C}{2\xi}\int_{-\xi}^{\xi}\big|l(a)-l\big(a - \frac{\xi+r}{2}\big)\big|\,dr \rightarrow 0, \quad \xi \rightarrow 0.
    \end{align} 
Sending $\xi \rightarrow 0$ in \eqref{equality:sending-xi-0} and using \eqref{xi-0}, we get ${\rm (i)}$. A similar lines of argument yields ${\rm (ii)}.$
\end{proof}
\subsection{ Existence of viscous solution}
To achieve our main result, a significant technical tool is to establish the existence of a weak solution for the viscous problem. For a small $\eps > 0$, consider the viscous problem
\begin{equation}\label{eq:viscous}
    \begin{aligned}
    & du_\eps - \text{div}(F(u_\eps))\,dt - \Delta \Phi(u_\eps)\,dt  = \phi(u_\eps)\,dW(t) + \int_{E} \nu(u_\eps;z)\widetilde{N}(dz,dt) + \eps\Delta u_\eps,~\text{in}~\Pi, \\
    & u_\eps(0,x) = u_0^\eps(x)~~\text{in}~~ D \quad \text{and}~~u_\eps = 0\quad \text{ on}~~\Sigma\,,
    \end{aligned}
\end{equation}   
where $u_0^\eps \in H^1(D)$ such that $u_0^\eps \rightarrow u_0$ in $L^2(D)$. Regarding the existence of weak solution to \eqref{eq:viscous}, we follow the implicit time discretization method as proposed in \cite{Vallet-2008} for \eqref{eq:viscous} with $\nu =0$ and \cite[Section 3]{Majee-2019} to arrive at the following lemma:
\begin{lem}
Let $\eps > 0$ be given and the assumptions \ref{A1}, \ref{A2}, \ref{A3}, \ref{A4}, \ref{A5} and \ref{A6} hold true. Then, there exists a weak solution $u_\eps \in N_\omega^2(0,T, H^1(D))$ to the problem \eqref{eq:viscous} with $$\partial_t\big(u_\eps - \int_0^t\phi(u)\,dW(s) - \int_0^t\int_{E}\nu(u;z)\widetilde{N}(dz,dt)\big) \in L^2(\Omega \times (0,T), H^{-1}(D)).$$ Moreover, there exists a constant $C > 0$, independent of $\eps$, such that
\begin{align}\label{inq:apriori-bound}
    \underset{0 \le t \le T}{sup}\mathbb{E}\Big[\|u_\eps(t)\|_{L^2(D)}^2\Big] + \eps \int_0^T\mathbb{E}\Big[\|\nabla u_\eps(s)\|_{L^2(D)}^2\Big]\,ds + \int_0^T\mathbb{E}\Big[\|\nabla G( u_\eps(s))\|_{L^2(D)}^2\Big]\,ds \le C.
\end{align} 
\end{lem}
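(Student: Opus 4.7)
The plan is to follow a semi-implicit time discretization scheme as in \cite{Vallet-2008,Majee-2019}. Fix $N\in\mathbb{N}$, set $\Dt=T/N$, $t_n=n\Dt$, and $u_\eps^0:=u_0^\eps$. For $n=0,\ldots,N-1$, I would construct $u_\eps^{n+1}$ as the $\mathcal{F}_{t_{n+1}}$-measurable $H^1_0(D)$-valued solution of the nonlinear elliptic problem
\begin{align*}
u_\eps^{n+1}-\Dt\,\Div F(u_\eps^{n+1})-\Dt\,\Delta\Phi(u_\eps^{n+1})-\eps\Dt\,\Delta u_\eps^{n+1} = u_\eps^n + \phi(u_\eps^n)\Delta_n W + \int_{t_n}^{t_{n+1}}\!\!\int_E\nu(u_\eps^n;z)\,\widetilde{N}(dz,ds),
\end{align*}
where $\Delta_n W:=W(t_{n+1})-W(t_n)$. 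The right-hand side is $\mathcal{F}_{t_{n+1}}$-measurable and lies in $L^2(\Omega;L^2(D))$ by an inductive argument using \ref{A4} and \ref{A6}. The operator $v\mapsto v-\Dt\,\Div F(v)-\Dt\,\Delta\Phi(v)-\eps\Dt\,\Delta v$ is coercive, hemicontinuous, and strictly monotone from $H^1_0(D)$ to $H^{-1}(D)$ (monotonicity of $-\Delta\Phi$ following from the monotonicity of $\Phi$, and strict monotonicity coming from the $\eps$-viscosity), so the existence of $u_\eps^{n+1}$ follows from the Minty--Browder theorem applied $\omega$-wise, combined with a standard measurable selection argument to guarantee adaptedness.

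Next, testing the discrete equation against $u_\eps^{n+1}$ and using the identity $a(a-b)\ge\tfrac12(a^2-b^2)$, the boundary condition, the chain rule $\nabla\Phi(u)\cdot\nabla u=|\nabla G(u)|^2$, and integration by parts on $\Div F(u_\eps^{n+1})u_\eps^{n+1}$ (which vanishes since $F_k(0)=0$ and $u_\eps^{n+1}\in H^1_0(D)$), then taking expectation, exploiting the It\^o isometry for $\Delta_n W$ and the compensator identity for the Poisson increment, together with \ref{A4}--\ref{A6}, I would arrive at a discrete inequality of the form
\begin{align*}
\mathbb{E}\bigl[\|u_\eps^{n+1}\|_{L^2(D)}^2\bigr] + \eps\Dt\sum_{k=0}^{n}\mathbb{E}\bigl[\|\nabla u_\eps^{k+1}\|_{L^2(D)}^2\bigr] + \Dt\sum_{k=0}^{n}\mathbb{E}\bigl[\|\nabla G(u_\eps^{k+1})\|_{L^2(D)}^2\bigr] \le C\Big(1+\Dt\sum_{k=0}^{n}\mathbb{E}\bigl[\|u_\eps^{k}\|_{L^2(D)}^2\bigr]\Big).
\end{align*}
A discrete Gronwall argument then yields \eqref{inq:apriori-bound} at the discrete level, uniformly in $N$.

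Third, I would introduce the piecewise-constant interpolant $u_\eps^{(N)}(t):=u_\eps^{n+1}$ on $(t_n,t_{n+1}]$ and extract a subsequence that converges weakly in $L^2(\Omega\times(0,T);H^1_0(D))$, with $G(u_\eps^{(N)})$ weakly convergent in $L^2(\Omega\times(0,T);H^1(D))$. Time regularity of the ``drift part'' $u_\eps^{(N)}-\int_0^{\cdot}\phi(u_\eps^{(N)})\,dW-\int_0^{\cdot}\!\int_E\nu(u_\eps^{(N)};z)\widetilde{N}(dz,ds)$ in $L^2(\Omega\times(0,T);H^{-1}(D))$ comes directly from the equation, so an Aubin--Lions--Dubinski\u\i type lemma together with the martingale representation of the stochastic correctors (as in \cite{Majee-2019}) provides strong $L^2(\Omega\times(0,T);L^2(D))$ convergence, hence a.e. convergence along a further subsequence. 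Continuity of $F,\Phi,\phi,\nu$ together with \ref{A4},\ref{A6} and the It\^o/BDG isometries permit passing to the limit in every nonlinear term, with monotonicity of $\Phi$ (via Minty's trick) identifying the limit of $\Delta\Phi(u_\eps^{(N)})$; lower semicontinuity of norms under weak convergence preserves \eqref{inq:apriori-bound} in the limit.

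The main obstacle is the simultaneous presence of a degenerate diffusion $\Phi$, which by itself provides only the Kirchhoff-function gradient $\nabla G(u)$ in $L^2$, and of the compensated Poisson forcing. Fortunately, the artificial viscosity $\eps\Delta u$ restores the full $H^1_0$-coercivity needed to solve the discrete elliptic problems and to obtain enough compactness for strong convergence. The jump noise is then controlled by combining \ref{A6} with the BDG inequality for Poisson random measures, which gives $\mathbb{E}[\sup_t\|\int_0^t\!\int_E\nu(u;z)\widetilde{N}(dz,ds)\|_{L^2}^2]\le C\lambda^{\star 2}\|g\|_{L^2(E,m)}^2\,\mathbb{E}\int_0^T\!\|u\|_{L^2}^2\,ds$, and this closes the a priori estimate uniformly in both $N$ and $\eps$.
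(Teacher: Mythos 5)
Your overall strategy (implicit time discretization as in \cite{Vallet-2008} and \cite[Section 3]{Majee-2019}, discrete energy estimates, then passage to the limit) is exactly the route the paper takes — the paper itself only cites those references rather than writing the proof out. However, there is a genuine gap in your construction of the discrete solutions. You claim that $v\mapsto v-\Dt\,\Div F(v)-\Dt\,\Delta\Phi(v)-\eps\Dt\,\Delta v$ is strictly monotone from $H^1_0(D)$ to $H^{-1}(D)$, "monotonicity of $-\Delta\Phi$ following from the monotonicity of $\Phi$". This implication is false: monotonicity of the scalar function $\Phi$ gives $\int_D(\Phi(u)-\Phi(v))(u-v)\,dx\ge 0$ (i.e.\ monotonicity of the Nemytskii operator in the $L^2$ pairing, equivalently monotonicity of $-\Delta\Phi(\cdot)$ with respect to the $H^{-1}$ inner product), but \emph{not} $\int_D\nabla\big(\Phi(u)-\Phi(v)\big)\cdot\nabla(u-v)\,dx\ge 0$, which is what the $H^{-1}\times H^1_0$ duality against $u-v$ requires. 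Writing $\nabla\Phi(u)-\nabla\Phi(v)=\Phi'(u)\nabla(u-v)+(\Phi'(u)-\Phi'(v))\nabla v$, the cross term $(\Phi'(u)-\Phi'(v))\nabla v$ is not controlled by $\|u-v\|_{L^2}$, and when $\Phi'$ degenerates one can build one-dimensional examples (e.g.\ $\Phi(r)=r^+$, $u$ a small tent function supported where $v\le 0$ has slope of order one) for which this pairing is strictly negative and is \emph{not} absorbed by $\|u-v\|_{L^2}^2+\eps\Dt\|\nabla(u-v)\|_{L^2}^2$; so the $\eps$-viscosity does not rescue monotonicity. (A second, smaller, issue: even the flux contribution only yields monotonicity under a smallness condition $\Dt\lesssim\eps$, which you should state.)

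The construction can be repaired in any of three standard ways, and this is precisely where the cited references differ from your argument: (i) observe that $-\eps\Delta v-\Delta\Phi(v)=-\Div\big((\eps+\Phi'(v))\nabla v\big)$ is a Leray--Lions operator (strictly monotone in the gradient variable, with linear growth and $\eps$-coercivity), hence \emph{pseudomonotone} from $H^1_0$ to $H^{-1}$; adding the identity and the completely continuous perturbation $-\Dt\,\Div F(\cdot)$ preserves pseudomonotonicity, and Brezis' surjectivity theorem for coercive pseudomonotone operators gives existence (though not uniqueness, so your measurable-selection step becomes essential rather than cosmetic); (ii) substitute $w=\eps u+\Phi(u)$, i.e.\ $u=b^{-1}(w)$ with $b:=\eps\,\mathrm{id}+\Phi$ bi-Lipschitz, and apply Minty--Browder to the genuinely monotone, coercive operator $w\mapsto b^{-1}(w)-\Dt\,\Div F(b^{-1}(w))-\Dt\,\Delta w$ for $\Dt$ small; or (iii) pose the discrete problem with $H^{-1}(D)$ as pivot space, where $u\mapsto-\Delta(\eps u+\Phi(u))$ \emph{is} monotone — this is essentially the framework of \cite{Vallet-2008}. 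One further small point in your energy estimate: keep the full identity $a(a-b)=\tfrac12(a^2-b^2)+\tfrac12(a-b)^2$ rather than the inequality, since the term $\tfrac12\|u_\eps^{n+1}-u_\eps^n\|_{L^2}^2$ is exactly what absorbs the cross terms $\mathbb{E}\big[\int_D\phi(u_\eps^n)\Delta_nW\,(u_\eps^{n+1}-u_\eps^n)\,dx\big]$ and its Poisson analogue, which do not vanish because $u_\eps^{n+1}$ is not $\mathcal{F}_{t_n}$-measurable. With these corrections your argument goes through and delivers \eqref{inq:apriori-bound} uniformly in $\eps$.
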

%%%%%%%%%%%%%%%%%%%%%%%%%%%%%%%%%%%%%%%%
\iffalse
\subsection{Local Kato inequality}
The following local Kato inequality is a basic adaptation of the local Kato inequality found in \cite{Bauzet-2015}, \cite{Majee-2019} and of the techniques used to develop global Kato inequality in the upcoming analysis. We state the result for ease.
\begin{lem}\label{lem:local-kato} Let ${\tt u}$ and ${\tt v}$ be Young measure-valued limit of 
viscous solutions to \eqref{eq:degenerate-SPDE} with initial data $\hat{u}_0, u_0, \in L^2(D)$. Then for any $\mathcal{D}^+([0,T] \times D)$-function $\psi$, we have
\begin{align}
    0 \le &\,\mathbb{E}\Big[\int_{\Pi}\int_0^1\int_0^1\big({\tt u}(t, x, \alpha)-{\tt v}(t,x, \gamma)\big)^+\partial_t\psi(t,x)\,d\alpha\,d\gamma\,dx\,dt\Big]\notag \\
    -&\,\mathbb{E}\Big[\int_{\Pi}\int_0^1\int_0^1F^+({\tt u}(t, x, \alpha), {\tt v}(t,x, \gamma))\cdot\nabla\psi(t,x)\,d\alpha\,d\gamma\,dx\,dt\Big] \notag \\  
+&\,\mathbb{E}\Big[\int_{\Pi}\int_0^1\int_0^1\Phi^+({\tt u}(t, x, \alpha), {\tt v}(t,x, \gamma))\Delta\psi(t,x)\,d\alpha\,d\gamma\,dx\,dt\Big] +    \int_{D}\big(\hat{u}_0(x)- u_0(x)\big)^+\psi(0,x)\,dx 
\end{align}
\end{lem}
\fi
%%%%%%%%%%%%%%%%%%%%%%%%%%%%%%%%%%%%%%%%%%%%%%%%%%%%%%%%%%%%%%%%%%%%%%
\subsection{Global Kato's inequality}\label{sec:Global-Kato -ine}
To establish the uniqueness of entropy solutions for \eqref{eq:degenerate-SPDE}, the central part is to obtain the following global Kato's inequality. 
\begin{lem}\label{lem:Global-Kato} Let ${\tt u}$ and ${\tt v}$ be the Young measure-valued limit of 
viscous solutions to \eqref{eq:degenerate-SPDE} with initial data $\hat{u}_0 \in L^2(D)$ and  ${u}_0 \in L^2(D)$, respectively. Then, for any $ \psi \in \mathcal{D}^+([0,T] \times \R^d)$, it holds that
\begin{align}\label{inq:Global-Kato}
   0 \le &\,\mathbb{E}\Big[\int_{\Pi}\int_0^1\int_0^1\big({\tt u}(t, x, \alpha)-{\tt v}(t,x, \gamma)\big)^+\partial_t\psi(t,x)\,d\alpha\,d\gamma\,dx\,dt\Big]\notag \\
    -&\,\mathbb{E}\Big[\int_{\Pi}\int_0^1\int_0^1F^+({\tt u}(t, x, \alpha), {\tt v}(t,x, \gamma))\cdot\nabla\psi(t,x)\,d\alpha\,d\gamma\,dx\,dt\Big] \notag \\  
+&\,\mathbb{E}\Big[\int_{\Pi}\int_0^1\int_0^1\Phi^+({\tt u}(t, x, \alpha), {\tt v}(t,x, \gamma))\Delta\psi(t,x)\,d\alpha\,d\gamma\,dx\,dt\Big] +    \int_{D}\big(\hat{u}_0(x)- u_0(x)\big)^+\psi(0,x)\,dx\,.
\end{align}
\end{lem}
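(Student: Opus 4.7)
The plan is to derive two separate Kato-type inequalities via the Kru\v{z}kov doubling of variables method and combine them via the pointwise algebraic identity
\[
(a-b)^+ \;=\; (a^+ - b^+)^+ \;+\; (b^- - a^-)^+ \qquad (a,b\in\R),
\]
applied to the Young-measure-valued limits $\tt u$ and $\tt v$. This symmetric decomposition is the substitute for the one used by Bauzet et al.\ \cite{Bauzet-2014} and, as emphasized in the introduction, avoids invoking \eqref{inq:j-kato} so that the degenerate term $\Phi$ can be handled more directly.

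For the first inequality, involving (in the limit) $({\tt u}^+-{\tt v}^+)^+$, I would take two viscous solutions $u_\eps(t,x)$ and $v_\delta(s,y)$ of \eqref{eq:viscous} with initial data $\hat u_0^\eps$ and $u_0^\delta$, apply the It\^o--L\'evy formula to $\beta_\xi\bigl((u_\eps\ast\rho_\kappa)(t,x)-k\bigr)$ with the admissible-up-to-the-boundary mollification \eqref{eq:mollification}, and freeze $k = v_\delta(s,y)^+$. Since $k\ge 0$, the triple $(k,\psi,\beta_\xi)$ lies in $\mathcal{A}^+$ for any $\psi\in\mathcal{D}^+([0,T]\times\R^d)$. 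Symmetrically, the equation for $v_\delta(s,y)$ is tested against the resulting expression, all multiplied by $\psi(t,x)\,\varrho_\eta(t-s)\,\rho_{\eta'}(x-y)$. Taking expectations, summing, and passing to the limits in the order $\eta,\eta',\kappa\to 0$, then $\eps,\delta\to 0$ through the Young-measure representation, and finally $\xi\to 0$ gives the desired bound; here the pointwise equality $(u_\eps - v_\delta^+)^+=(u_\eps^+-v_\delta^+)^+$ is used precisely in the limit $\xi\to 0$.

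For the second inequality, involving $({\tt v}^--{\tt u}^-)^+$, one runs the analogous argument with $\Breve{\beta_\xi}(v_\delta(s,y)-k)$ and $k=-u_\eps(t,x)^-\le 0$; the definition of $\mathcal{A}^-$ again permits $\psi\in\mathcal{D}^+([0,T]\times\R^d)$ because $-k\ge 0$. The identity $(-u_\eps^- - v_\delta)^+=(v_\delta^- - u_\eps^-)^+$ appears in the $\xi\to 0$ step. Adding the two resulting inequalities, the pointwise identity reassembles the quantities $({\tt u}-{\tt v})^+\partial_t\psi$, $F^+({\tt u},{\tt v})\cdot\nabla\psi$, $\Phi^+({\tt u},{\tt v})\Delta\psi$, and the initial datum $(\hat u_0-u_0)^+\psi(0,\cdot)$. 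The Brownian It\^o martingale terms vanish under expectation; after symmetrization the quadratic $\phi^2$-corrections cancel against the cross derivative contributions arising from the two solutions; and the Poisson compensator terms combine with an analogous cancellation.

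The main obstacle will be the treatment of the degenerate second-order operator $\Delta\Phi$ in the doubling of variables under the asymmetric entropy $\beta_\xi$. This produces single-variable expressions of the form $\int_{v_\delta^+}^{u_\eps}\beta_\xi''(u_\eps-\sigma)\Phi'(\sigma)\,d\sigma$ together with cross terms in $\nabla G(u_\eps)\cdot\nabla G(v_\delta)$. The cross terms are controlled in a dissipative manner by the uniform $H^1$-bound on $G(u_\eps)$ in \eqref{inq:apriori-bound}, while the single-variable limits are extracted by Lemma \ref{lem:to-deal-with-degnerate-term}, which transforms the smeared second derivative $\beta_\xi''$ into $\mathrm{sgn}^+$ as $\xi\to 0$ and thereby produces exactly $\Phi^+$. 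Two secondary obstacles also require care: first, the boundary traces must be shown to vanish, which follows from $u_\eps,v_\delta=0$ on $\Sigma$ combined with the cone-adapted convolution \eqref{eq:mollification}; second, the jump-noise corrections $\beta_\xi(u_\eps+\nu(u_\eps;z)-v_\delta^+)-\beta_\xi(u_\eps-v_\delta^+)$ require the monotonicity of $\nu$ in \ref{A6} together with $\beta_\xi''\ge 0$ to receive the sign needed to close the estimate, removing the $\lambda^\star<1$ restriction that was used in the unbounded-domain setting.
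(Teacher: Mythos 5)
Your skeleton --- doubling of variables, the decomposition \eqref{eq:idenity-1}, Lemma \ref{lem:to-deal-with-degnerate-term} to convert the smeared $\beta_\xi''$ into $\mathrm{sgn}^+$ in the degenerate term, and the monotonicity of $\nu$ to sign the jump corrections --- coincides with the paper's strategy. However, there is a genuine gap, and it is exactly at the point that makes the Dirichlet problem hard: the boundary terms. You claim the boundary traces ``vanish'' because $u_\eps = v_\delta = 0$ on $\Sigma$. They do not. Since $\psi \in \mathcal{D}^+([0,T]\times\R^d)$ is not compactly supported in $D$, passing from the semi-Kru\v{z}kov entropies to positive parts via identities of the type \eqref{identity-for-dt-psi-0}--\eqref{identity-for-dt-psi-02} leaves sign-indefinite residual terms of the form
\begin{align*}
-\,\mathbb{E}\Big[\int_{\Pi}\int_D \text{sgn}^+(u_\eps^-)\big\{u_\eps\,\partial_t\psi\,\rho_m - F(u_\eps)\cdot\nabla_x(\psi\rho_m) + \Phi(u_\eps)\Delta_x(\psi\rho_m)\big\}\,dx\,dy\,dt\Big],
\end{align*}
together with an initial-data analogue. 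In the paper these are collected into the operators $\mathcal{F}_{\tt v}(\psi\bar{\rho}_m)$ and $\hat{\mathcal{F}}_{\tt u}(\psi\bar{\rho}_m)$, and disposing of them requires a whole mechanism your plan omits: (i) the entropy inequality at $k=0$ (cf.~\eqref{inq:entropy-k-0}--\eqref{inq:f-v-psi-non-negative}) to show these operators are non-negative and linear on $\mathcal{D}^+([0,T]\times\R^d)$, so that $\lim_{m\to\infty}\mathcal{F}_{\tt v}(\psi\bar{\rho}_m)$ exists by monotonicity; and (ii) the local Kato inequality of Remark \ref{rem:local-kato}, combined with the splitting $\psi = \psi\bar{\rho}_{m'} + (1-\bar{\rho}_{m'})\psi$ and a partition of unity over the covering $\{\mathcal{B}_i\}$, to cancel them (cf.~\eqref{inq:global-kato-1}--\eqref{inq:global-kato-03}). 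Without this, your limit inequality retains uncontrolled terms and does not reduce to \eqref{inq:Global-Kato}.

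A second, more technical flaw is the step ``freeze $k = v_\delta(s,y)^+$'' after applying the It\^o--L\'evy formula for deterministic $k$. Substituting a random, non-adapted quantity for $k$ is not legitimate: the martingale terms $\int\phi(u_\eps)\beta_\xi'(u_\eps-k)\,dW(t)$ no longer have zero expectation once $k$ is replaced by $v_\delta(s,y)^+$, which is the anticipativity obstacle the doubling procedure is designed to avoid. The paper keeps $k$ deterministic, smooths in $k$ against $J_l(u_\eps - k)$ (resp.\ $J_l(k-\beta_{\tilde{\xi}}(u_\theta^\kappa))$), and controls the resulting cross martingale term $\mathcal{H}_6$ by a further It\^o expansion of $s \mapsto J_l(k-\beta_{\tilde{\xi}}(u_\theta^\kappa(s,y)))$ together with the kernel estimates \eqref{lem:for-ito-term}. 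This is also why the composition $\beta_\xi(\beta_{\tilde{\xi}}(\cdot)-k)$ with \emph{two} parameters and the $\kappa$-mollification \eqref{eq:mollification} are needed: your raw $v_\delta^+$ is not $C^2$, so the It\^o expansion required at this stage (cf.~Lemma \ref{lem:G7-H6}) is unavailable in your setup.
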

An inherent method to obtain the uniqueness of the entropy solution is to use a modified version of the traditional Kru\v{z}kov's doubling of variables technique. It is important to highlight that the primary obstacle arises from the doubling of the time variable, leading to the emergence of anticipative stochastic integrands, thereby defying interpretation within the conventional It\^o-L\'evy framework. To overcome this, we will compare two sequences $\{u_\theta(s,y)\}$ and $\{u_\eps(t,x)\}$ of weak solutions of \eqref{eq:viscous} with a special test function along with the regularized initial data $\hat{u}_0(y)$ and $u_0(x)$ respectively. The regularity of $u_\theta \in H^1(\R^d)$ is insufficient to prove the global Kato's inequality; see proof of the Lemma \ref{lem:G7-H6}. Thus, we regularize $u_\theta$ by a slightly modified version of the space mollifier $\{\rho_\kappa\}_{\kappa > 0}$ as described in \eqref{eq:mollification} and denote it by $u_\theta^\kappa$. Then $u_\theta^\kappa$ satisfies the following equation in $L^2(D)$:
\begin{equation}
     \partial_t \Big[u_\theta^\kappa - \int_0^t\phi( u_\theta) \ast \rho_\kappa\,dW(t) -\int_0^t \int_{|z| > 0}  \eta(y, u_\theta; z) \ast \rho_\kappa\,\widetilde{N}(dz,dt) \Big] = \Delta(\Phi(u_\theta) \ast \rho_\kappa) -\text{div} (F(u_\theta) \ast \rho_\kappa) + \theta\Delta u_\theta^\kappa, \notag
\end{equation}
with initial data $u_\theta^\kappa(0,x) = u_\theta(0) \ast \rho_\kappa$ and $u_\theta^\kappa = 0$ on $\Sigma$. Note that, one cannot get \eqref{inq:Global-Kato} directly by comparing $u_\theta$ and $u_\eps$. Thus, we will use the following identity
\begin{align}\label{eq:idenity-1}
    (a-b)^+ = (a^+ - b^+)^+ + (b^- - a^-)^+, \quad \forall~a,b\in \R\,.
\end{align}
As we mentioned, we need a special test function for applying the doubling the variables technique. In this regard, we choose a partition of unity subordinate to a covering of $\Bar{D}$ by balls  $\mathcal{B}_i$, for $1 \le i \le  \bar{k}$ satisfying $\mathcal{B}_0 \cap \partial D = \emptyset$ and $1 \le i \le \bar{k}$, $\mathcal{B}_i \subset \Bar{\mathcal{B}_i}$ with $\Bar{\mathcal{B}_i} \cap \partial D$ part of Lipschitz graph. Then, we define the following.
\begin{itemize}
    \item [i)] $\psi \in \mathcal{D}^+([0,T] \times \R^d)$ with $\text{supp}(\psi(t, \cdot)) \subset \mathcal{B} := \mathcal{B}_i $ for some $i \in \{1, \cdot\cdot\cdot, \bar{k}\};$
    \item [ii)] $\rho_n(t-s)$ is a sequence of mollifier in $\R$ with $\text{supp}(\rho_n) \subset [-\frac{2}{n}, 0]$;
    \item[iii)] $\rho_m(x-y)$ a shifted sequence of mollifier in $\R^d$ (see, \cite{Bauzet-2014,Girault-1999}) such that $y \mapsto \rho_m(x-y) \in \mathcal{D}(D)$ for all  $ x \in \mathcal{B} \cap D$.
    %with $\mathcal{B} := \mathcal{B}_i$ for some $i \in \{1, \cdot\cdot\cdot, \bar{k}\}.$ 
\end{itemize}
%%%%%%%%%%%%%%%%%%%%%%%%%%%%%%%%%%%%%%%%%%%%%%%%%%%%%%%
\iffalse
Following  \cite{Carrillo-1999,Bendahmane-2004}, we introduce a special test function $\psi_m(t,x)$ given by 
\begin{align*}
    \psi_m(t,x):= \int_0^T\int_{D}\psi(t,x)\rho_m(x-y)\rho_n(t-s)\,dy\,ds\,.
\end{align*}
Observe that for $m$ big enough, $y \mapsto \psi(t,x)\rho_m(x-y) \in \mathcal{D}(D)$, and $
    \psi_m(t,x)= \psi(t,x)\Bar{\rho}_m(x)\Bar{\rho}_n(t)$, where
$$\Bar{\rho}_m(x):= \int_D\rho_m(x-y)\,dy,\quad \Bar{\rho}_n(t):= \int_0^T\rho_n(t-s)\,ds$$
are non-negative, non-decreasing sequences bounded by $1.$ Moreover, $\bar{\rho}_m(x)=1$ for any $x \in \mathcal{B}$ such that $d\big(x, \R^d\backslash D\big) \ge \frac{c}{m}$, where $c$ is a positive constant depending on $\mathcal{B}.$ One can easily verify that, 
\begin{align*}
    &\psi_m(t,x) \in \mathcal{D}^+([0,T] \times D)~\text{for every}~m,~~~\text{and}~~
    \psi_m \le \psi_m'~~\text{for}~~m \le m',\\
    &\psi_m(t,x) = \psi(t,x)~~\text{for any}~~ y \in d(x, \R^d\backslash D) \ge \frac{c}{m},\\
    &\psi_m(t,x) \le \psi(t,x), \quad \psi_m \rightarrow \psi~~\text{in}~~L^p(\Pi)~~\text{as}~~m \rightarrow \infty ~~\text{for any}~~p \ge 1.
\end{align*}
\fi
%%%%%%%%%%%%%%%
Observe that for $m$ large enough, $y \mapsto \psi(t,x)\rho_m(x-y) \in \mathcal{D}(D)$, and we have
$$\int_0^T\int_D\psi(t,x)\rho_m(x-y)\rho_n(t-s)\,dy\,ds = \psi(t,x)\bar{\rho}_m(x)\bar{\rho}_n(t),$$
%$\psi_m(t,x)= \psi(t,x)\Bar{\rho}_m(x)\Bar{\rho}_n(t)$,
where
$\Bar{\rho}_m(x) = \int_D\rho_m(x-y)\,dy,\quad \Bar{\rho}_n(t) = \int_0^T\rho_n(t-s)\,ds$
are non-negative, non-decreasing sequences bounded by $1.$ 
%%%%%%%%%%%%%%%%
\iffalse

We also have the following a-priori bounds for the viscous solution whose proof is similar to that of \cite[Lemma 5.4]{Bauzet-2015} and \cite[Theorem 3.7]{Majee-2019}:
\begin{align}\label{inq:apriori-bound}
    \underset{0 \le t \le T}{sup}\mathbb{E}\Big[\|u_\eps(t)\|_{L^2(D)}^2\Big] + \eps \int_0^T\mathbb{E}\Big[\|\nabla u_\eps(s)\|_{L^2(D)}^2\Big]\,ds + \int_0^T\mathbb{E}\Big[\|\nabla G( u_\eps(s))\|_{L^2(D)}^2\Big]\,ds \le C,
\end{align}
where $C$ is a positive constant independent of $\eps$. 
\fi
%%%%%%%%%%%%

\subsubsection{\textbf{The first half of Kato's inequality}}\label{sec:The first half of Kato's inequality}
In view of \eqref{eq:idenity-1}, we first get the global Kato's inequality \eqref{inq:Global-Kato} in terms of  $\big({\tt u}^+(t,x,\alpha) -{\tt v}^+(t,x,\gamma)\big)^+$, where ${\tt u}(t,x,\alpha)$ and ${\tt v}(t,x,\gamma)$ are the Young measure-valued limit process of $\{u_\theta\}$ and $\{u_\eps\}$ respectively. Let $J$ be the standard non-negative mollifier in $\R$  and $J_l(r) = \frac{1}{l}J_l(\frac{r}{l})$ with $\text{supp}(J_l) = [-\frac{1}{l},\frac{1}{l}]$. Applying the It\^o-L\'evy formula to $\beta_\xi(\beta_{\Tilde{\xi}}(u_\theta^\kappa(s,y)) -k)\psi\rho_m\rho_n,~0<\Tilde{\xi}<\xi$, multiplying by $J_l(u_\eps(t,x) - k)$, and integrating with respect to $t,x,k$, we have, after taking expectation together with the application of Fubini's theorem 
\begin{align}\label{inq:doublingvariable-1st}
&\mathbb{E}\Big[\int_{\Pi}\int_{D}\int_{\R}\beta_\xi\big(\beta_{\Tilde{\xi}}(u_\theta^\kappa(0,y))-k\big)\psi(t,x)\rho_n(t)\rho_m(x-y)J_l\big(u_\eps(t,x) - k\big)\,dk\,dy\,dx\,dt\Big] \notag \\
+\,&\mathbb{E}\Big[\int_{\Pi^2}\int_{\R}\beta_\xi\big(\beta_{\Tilde{\xi}}(u_\theta^\kappa(s,y))-k\big)\psi(t,x)\partial_s\rho_n(t-s)\rho_m(x-y)J_l\big(u_\eps(t,x) - k\big)\,dk\,dy\,ds\,dx\,dt\Big] \notag \\
- \,& \mathbb{E}\Big[\int_{\Pi^2}\int_{\R}\beta_\xi^{\prime}\big(\beta_{\Tilde{\xi}}(u_\theta^\kappa(s,y))-k\big)\beta_{\Tilde{\xi}}^\prime(u_\theta^\kappa(s,y))\big(F(u_\theta(s,y))\ast\rho_\kappa\big)\psi(t,x)\rho_n(t-s)\notag \\&\hspace{6cm}\times\nabla_y\rho_m(x-y)J_l\big(u_\eps(t,x) - k\big)\,dk\,dy\,ds\,dx\,dt\Big]\notag \\
- \,& \mathbb{E}\Big[\int_{\Pi^2}\int_{\R}\Big(\beta_\xi^{\prime\prime}\big(\beta_{\Tilde{\xi}}(u_\theta^\kappa(s,y))-k\big)|\beta_{\Tilde{\xi}}^\prime(u_\theta^\kappa(s,y))|^2 + \beta_\xi^{\prime}\big(\beta_{\Tilde{\xi}}(u_\theta^\kappa(s,y))-k\big)\beta_{\Tilde{\xi}}^{\prime\prime}(u_\theta^\kappa(s,y))\Big)\nabla u_\theta^\kappa(s,y)\notag \\&\hspace{3cm}\times\big(F(u_\theta(s,y))\ast\rho_\kappa\big)\psi(t,x)\rho_n(t-s)\rho_m(x-y)J_l\big(u_\eps(t,x) - k\big)\,dk\,dy\,ds\,dx\,dt\Big]\notag \\
- \,& \mathbb{E}\Big[\int_{\Pi^2}\int_{\R}\beta_\xi^{\prime}\big(\beta_{\Tilde{\xi}}(u_\theta^\kappa(s,y))-k\big)\beta_{\Tilde{\xi}}^\prime(u_\theta^\kappa(s,y))\big(\nabla\Phi(u_\theta(s,y))\ast\rho_\kappa\big)\psi(t,x)\rho_n(t-s)\notag \\&\hspace{6cm}\times\nabla_y\rho_m(x-y)J_l\big(u_\eps(t,x) - k\big)\,dk\,dy\,ds\,dx\,dt\Big]\notag\\
- \,& \mathbb{E}\Big[\int_{\Pi^2}\int_{\R} \beta_\xi^{\prime}\big(\beta_{\Tilde{\xi}}(u_\theta^\kappa(s,y))-k\big)\beta_{\Tilde{\xi}}^{\prime\prime}(u_\theta^\kappa(s,y))\nabla u_\theta^\kappa(s,y)\big(\nabla\Phi(u_\theta(s,y))\ast\rho_\kappa\big)\notag \\&\hspace{6cm}\times\psi(t,x)\rho_n(t-s)\rho_m(x-y)J_l\big(u_\eps(t,x) - k\big)\,dk\,dy\,ds\,dx\,dt\Big]\notag \\
+\, &\frac{1}{2}\,\mathbb{E}\Big[\int_{\Pi^2}\int_{\R}\Big(\beta_\xi^{\prime\prime}\big(\beta_{\Tilde{\xi}}(u_\theta^\kappa(s,y))-k\big)|\beta_{\Tilde{\xi}}^\prime(u_\theta^\kappa(s,y))|^2 + \beta_\xi^{\prime}\big(\beta_{\Tilde{\xi}}(u_\theta^\kappa(s,y))-k\big)\beta_{\Tilde{\xi}}^{\prime\prime}(u_\theta^\kappa(s,y))\Big)\notag \\&\hspace{3cm}\times\big|\phi(u_\theta(s,y))\ast\rho_\kappa\big|^2\psi(t,x)\rho_n(t-s)\rho_m(x-y)J_l\big(u_\eps(t,x) - k\big)\,dk\,dy\,ds\,dx\,dt\Big]\notag \\
+&\,\mathbb{E}\Big[\int_{\Pi^2}\int_{\R}\beta_\xi^{\prime}\big(\beta_{\Tilde{\xi}}(u_\theta^\kappa(s,y))-k\big)\beta_{\Tilde{\xi}}^\prime(u_\theta^\kappa(s,y))\big(\phi(u_\theta(s,y))\ast\rho_\kappa\big)\psi(t,x)\rho_n(t-s)\notag \\&\hspace{6cm}\times\rho_m(x-y)J_l\big(u_\eps(t,x) - k\big)\,dk\,dy\,dW(s)\,dx\,dt\Big]\notag \\
+&\,\mathbb{E}\Big[\int_{\Pi^2}\int_{\R}\int_{E}\Big(\beta_\xi\big(\beta_{\Tilde{\xi}}(u_\theta^\kappa(s,y)) + \nu( u_\theta(s,y); z)\ast \rho_\kappa -k\big) - \beta_{\xi}\big(\beta_{\Tilde{\xi}}(u_\theta^\kappa(s,y))-k\big)\notag\\  & \hspace{2cm}- (\nu( u_\theta(s,y); z)\ast \rho_\kappa)\beta_\xi^\prime\big(\beta_{\Tilde{\xi}}(u_\theta^\kappa(s,y) -k)\big)\beta_{\Tilde{\xi}}^\prime(u_\theta^\kappa(s,y))\Big)\psi(t,x)\rho_n(t-s)\rho_m(x-y)\notag\\&\hspace{6cm} \times J_l\big(u_\eps(t,x) - k\big)\,dk\,m(dz)\,dy\,ds\,dx\,dt\Big]\notag \\
+ &\,\mathbb{E}\Big[\int_{\Pi^2}\int_{\R}\int_{E}\Big(\beta_\xi\big(\beta_{\Tilde{\xi}}(u_\theta^\kappa(s,y)) + \nu( u_\theta(s,y); z)\ast \rho_\kappa -k\big) - \beta_{\xi}\big(\beta_{\Tilde{\xi}}(u_\theta^\kappa(s,y))-k\big)\Big)\notag \\ &\hspace{3cm}\times\psi(t,x)\rho_n(t-s)\rho_m(x-y)J_l\big(u_\eps(t,x) - k\big)\,dk\,\Tilde{N}(dz,ds)\,dy\,dx\,dt\Big] \notag \\
- \,& \theta\mathbb{E}\Big[\int_{\Pi^2}\int_{\R}\beta_\xi^{\prime}\big(\beta_{\Tilde{\xi}}(u_\theta^\kappa(s,y))-k\big)\beta_{\Tilde{\xi}}^\prime(u_\theta^\kappa(s,y))\nabla u_\theta^\kappa(s,y)\psi(t,x)\rho_n(t-s)\notag \\&\hspace{6cm}\times\nabla_y\rho_m(x-y)J_l\big(u_\eps(t,x) - k\big)\,dk\,dy\,ds\,dx\,dt\Big]\notag\\
- \,& \theta\mathbb{E}\Big[\int_{\Pi^2}\int_{\R} \Big(\beta_\xi^{\prime}\big(\beta_{\Tilde{\xi}}(u_\theta^\kappa(s,y))-k\big)\beta_{\Tilde{\xi}}^{\prime\prime}(u_\theta^\kappa(s,y)) + \beta_\xi^{\prime\prime}\big(\beta_{\Tilde{\xi}}(u_\theta^\kappa(s,y))-k\big)|\beta_{\Tilde{\xi}}^\prime(u_\theta^\kappa(s,y))|^2\Big)|\nabla u_\theta^\kappa(s,y)|^2\notag \\&\hspace{4cm}\times\psi(t,x)\rho_n(t-s)\rho_m(x-y)J_l\big(u_\eps(t,x) - k\big)\,dk\,dy\,ds\,dx\,dt\Big]\notag \\
\ge & \, \mathbb{E}\Big[\int_{\Pi^2}\int_{\R}\Big(\beta_\xi^{\prime\prime}\beta_{\Tilde{\xi}}(u_\theta^\kappa(s,y))-k\big)|\beta_{\Tilde{\xi}}^\prime(u_\theta^\kappa(s,y))|^2 \nabla u_\theta^\kappa(s,y)\big(\nabla\Phi(u_\theta(s,y))\ast\rho_\kappa\big)\notag \\&\hspace{2cm}\times\psi(t,x)\rho_n(t-s)\rho_m(x-y)J_l\big(u_\eps(t,x) - k\big)\,dk\,dy\,ds\,dx\,dt\Big] \notag \\
%+ & \theta\mathbb{E}\Big[\int_{\Pi^2}\int_{\R}\beta_\xi^{\prime\prime}\big(\beta_{\Tilde{\xi}}(u_\theta^\kappa(s,y))-k\big)|\beta_{\Tilde{\xi}}^\prime(u_\theta^\kappa(s,y))|^2|\nabla u_\theta^\kappa(s,y)|^2\notag \\&\hspace{5cm}\times\psi(t,x)\rho_n(t-s)\rho_m(x-y)J_l\big(\beta_{\Tilde{\xi}}(u_\eps(t,x)) - k\big)\,dk\,dy\,ds\,dx\,dt\Big] \notag \\
& \text{i.e.,}~~~ \sum_{i=1}^{12}\mathcal{G}_{i} \ge \mathcal{G}_0\,. %+ \hat{\mathcal{G}}_0.
\end{align}
On the other hand, an application of the It\^o-L\'evy formula to $\int_{D}\beta_{\xi}(k - u_\eps(t,x))\
\psi(t,x)\rho_n(t-s)\rho_m(x-y)\,dx$, multiplication by $J_l(k- \beta_{\xi}(u_\theta^\kappa(s,y)))$ and integration with respect to $k,s,y$ yields, after taking expectation
\begin{align}\label{inq:doubling-variable-2nd}
&\mathbb{E}\Big[\int_{\Pi}\int_{D}\int_{\R}\beta_\xi\big(k- u_\eps^0(x)\big)\psi(0,x)\rho_n(-s)\rho_m(x-y)J_l\big(k -\beta_{\Tilde{\xi}}(u_\theta^\kappa(s,y))\big)\,dk\,dx\,dy\,ds\Big] \notag \\   
+\,& \mathbb{E}\Big[\int_{\Pi^2}\int_{\R}\beta_\xi\big(k-u_\eps(t,x)\big)\partial_t\big(\psi(t,x)\rho_n(t-s)\big)\rho_m(x-y)J_l\big(k -\beta_{\Tilde{\xi}}(u_\theta^\kappa(s,y))\big)\,dk\,dx\,dy\,ds\,dt\Big]\notag \\
+ \,& \mathbb{E}\Big[\int_{\Pi^2}\int_{\R}\int_k^{u_\eps(t,x)}\beta_\xi^\prime(k-r)F^\prime(r)\,dr\,\nabla_x\big(\psi(t,x)\rho_m(x-y)\big)\rho_n(t-s)\notag \\ & \hspace{8cm}\times J_l\big(k -\beta_{\Tilde{\xi}}(u_\theta^\kappa(s,y))\big)\,dk\,dx\,dy\,ds\,dt\Big]\notag \\
%-\, &\mathbb{E}\Big[\int_{\Pi^2}\int_{\R}\Big(\beta_\xi^{\prime\prime}\big(k-\beta_{\Tilde{\xi}}(u_\eps(t,x))\big)|\beta_{\Tilde{\xi}}^\prime(u_\eps(t,x))|^2 - \beta_\xi^{\prime}\big(k-\beta_{\Tilde{\xi}}(u_\eps(t,x))\big)\beta_{\Tilde{\xi}}^{\prime\prime}(u_\eps(t,x)) \Big)\nabla u_\eps(t,x) \notag \\ &\hspace{2cm}\times F(u_\eps(t,x))\psi(t,x)\rho_m(x-y)\rho_n(t-s)J_l\big(k -\beta_{\Tilde{\xi}}(u_\theta^\kappa(s,y))\big)\,dk\,dx\,dy\,ds\,dt\Big]\notag \\
+ \,& \mathbb{E}\Big[\int_{\Pi^2}\int_{\R}\beta_\xi^\prime\big(k-u_\eps(t,x)\big)\nabla\Phi(u_\eps(t,x))\nabla_x\big(\psi(t,x)\rho_m(x-y)\big)\rho_n(t-s)\notag \\ &\hspace{8cm} \times J_l\big(k -\beta_{\Tilde{\xi}}(u_\theta^\kappa(s,y))\big)\,dk\,dx\,dy\,ds\,dt\Big]\notag \\
%+\,& \mathbb{E}\Big[\int_{\Pi^2}\int_{\R}\beta_\xi^\prime\big(k-\beta_{\Tilde{\xi}}(u_\eps(t,x))\big)\beta_{\Tilde{\xi}}^{\prime\prime}(u_\eps(t,x))\Phi^\prime(u_\eps(t,x))4\nabla_x\big(\psi(t,x)\rho_m(x-y)\big)\rho_n(t-s)\notag \\ &\hspace{8cm} \times J_l\big(k -\beta_{\Tilde{\xi}}(u_\theta^\kappa(s,y))\big)\,dk\,dx\,dy\,ds\,dt\Big]\notag \\
+ \,& \frac{1}{2}\mathbb{E}\Big[\int_{\Pi^2}\int_{\R}\beta_\xi^{\prime\prime}(k-u_\eps(t,x))|\phi( u_\eps(t,x))|^2\psi(t,x)\rho_m(x-y)\rho_n(t-s)J_l\big(k -\beta_{\Tilde{\xi}}(u_\theta^\kappa(s,y))\big)\,dk\,dx\,dy\,ds\,dt\Big]\notag \\
-\,&\mathbb{E}\Big[\int_{\Pi^2}\int_{\R}\beta_\xi^{\prime}(k-u_\eps(t,x))\phi( u_\eps(t,x))\psi(t,x)\rho_m(x-y)\rho_n(t-s)J_l\big(k -\beta_{\Tilde{\xi}}(u_\theta^\kappa(s,y))\big)\,dk\,dx\,dy\,ds\,dW(t)\Big]\notag \\
+ &\mathbb{E}\Big[\int_{\Pi^2}\int_{E}\int_{\R}\Big(\beta_\xi\big(k-u_\eps(t,x) - \nu(u_\eps(t,x);z)\big) - \beta_\xi(k-u_\eps(t,x)) + \nu(u_\eps(t,x);z)\beta^\prime(k- u_\eps(t,x))\Big)\notag \\ &\hspace{3cm}\times\psi(t,x)\rho_m(x-y)\rho_n(t-s)J_l\big(k -\beta_{\Tilde{\xi}}(u_\theta^\kappa(s,y))\big)\,dk\,m(dz)\,dx\,dy\,ds\,dt\Big]\notag \\ 
+ & \mathbb{E}\Big[\int_{\Pi^2}\int_{E}\int_{\R}\Big(\beta_\xi\big(k-u_\eps(t,x) - \nu(u_\eps(t,x);z)\big) - \beta_\xi(k-u_\eps(t,x))\Big)\psi(t,x)\rho_m(x-y)\rho_n(t-s)\notag \\ &\hspace{6cm}\times J_l\big(k -\beta_{\Tilde{\xi}}(u_\theta^\kappa(s,y))\big)\,dk\,\widetilde{N}(dz,dt)\,dx\,dy\,ds\Big]\notag \\ 
+ \,& \eps\mathbb{E}\Big[\int_{\Pi^2}\int_{\R}\beta_\xi^\prime(k-u_\eps(t,x))\nabla u_\eps(t,x)\nabla_x\big(\psi(t,x)\rho_m(x-y)\big)\rho_n(t-s)J_l\big(k -\beta_{\Tilde{\xi}}(u_\theta^\kappa(s,y))\big)\,dk\,dx\,dy\,ds\,dt\Big]\notag \\
-&  \eps\mathbb{E}\Big[\int_{\Pi^2}\int_{\R}\beta_\xi^{\prime\prime}(k-u_\eps(t,x))|\nabla u_\eps(t,x)|^2\psi(t,x)\rho_m(x-y)\rho_n(t-s)\notag \\& \hspace{9cm}\times J_l\big(k -\beta_{\Tilde{\xi}}(u_\theta^\kappa(s,y))\big)\,dk\,dx\,dy\,ds\,dt\Big]
\notag \\
\ge \, &\mathbb{E}\Big[\int_{\Pi^2}\int_{\R}\beta_\xi^{\prime\prime}\big(k-u_\eps(t,x)\big)|\nabla u_\eps(t,x)|^2\Phi^\prime(u_\eps(t,x))\psi(t,x)\rho_m(x-y)\rho_n(t-s)\notag \\& \hspace{5cm}\times J_l\big(k -\beta_{\Tilde{\xi}}(u_\theta^\kappa(s,y))\big)\,dk\,dx\,dy\,ds\,dt\Big]\notag \\
& \text{i.e.,}~~~ \sum_{i=1}^{10}\mathcal{H}_i \ge \mathcal{H}_0\,. %+ \hat{\mathcal{H}}_{0}.
\end{align}
We add \eqref{inq:doublingvariable-1st} and \eqref{inq:doubling-variable-2nd}, and look for a passage to the limit with respect to the various small parameters involved. These limits are established using conventional methods, such as the Lebesgue point theorem, the Lebesgue dominated convergence theorem, the continuity of translations in Lebesgue space, properties of mollification and the {\em a-priori} bound \eqref{inq:apriori-bound}, whose proofs follow from \cite{Bauzet-2012,Majee-2014,Bauzet-2015} and \cite{Majee-2019} under modulo cosmetic changes.\\

Before proceeding further, we recall some easily observed identities, whose proof can be found in \cite{Bendahmane-2005}.
For any $a,b \in \R$, there hold
\begin{align}
    &(a^+ - b)^+ = (a^+ - b^+)^+ - \text{sgn}^+(b^-)b\,,\label{identity-for-dt-psi-0}\\
    &\text{sgn}^+(a^+ - b)(F(a^+) - F(b)) = \text{sgn}^+(a^+ - b^+)(F(a^+) - F(b^+)) - \text{sgn}^+(b^-)F(b)\,, \label{identity-for-dt-psi-01}\\
    &\text{sgn}^+(a^+ - b)\big(\Phi(a^+) - \Phi(b) \big) = \text{sgn}^+\big(a^+ - b^+\big)\big(\Phi(a^+) - \Phi(b^+)\big) - \text{sgn}^+(b^-)\Phi(b)\,.\label{identity-for-dt-psi-02}
    %&\text{sgn}^+(a^+ -b) = \text{sgn}^+(a^+ -b^+)(1 -\text{sgn}^+(b^-)) + \text{sgn}^+(b^-). \label{identity-regarding-sgn-+}  
\end{align}
Since $\text{supp}(\rho_n) \subset [-\frac{2}{n}, 0]$, one has $\mathcal{G}_1 = 0$. Moreover, we have the following result regarding $\mathcal{H}_1$. 
\begin{align*}
&\underset{l \rightarrow \infty}{\lim}\,\underset{\kappa \rightarrow 0}{\lim}\,\underset{n \rightarrow \infty}{\lim}\,\mathcal{H}_1= \mathbb{E}\Big[\int_{D}\int_{D}\beta_\xi\big(\beta_{\Tilde{\xi}}(u_\theta(0,y)- u_\eps^0(x)\big)\psi(0,x)\rho_m(x-y)\,dx\,dy\Big] =: \Bar{\mathcal{H}}_1\,,\\
        &\underset{\xi \rightarrow 0}{\lim} \underset{\Tilde{\xi} \rightarrow 0}{\lim}\,\Bar{\mathcal{H}}_1=\mathbb{E}\Big[\int_{D}\int_{D}\big((u_\theta^0)^+(y)- u_\eps^0(x)\big)^+\psi(0,x)\rho_m(x-y)\,dx\,dy\Big]\,.\notag 
\end{align*}
        %%%%%%%%%%
        \iffalse 
        \underset{\Tilde{\xi} \rightarrow 0}\mathbb{E}\Big[\int_{D}\int_{D}\beta_\xi\big((u_\theta^0)^+(y)- u_\eps^0(x)\big)\psi(0,x)\rho_m(x-y)\,dx\,dy\Big]\notag \\
       &\underset{\xi \rightarrow 0} {\longrightarrow} \mathbb{E}\Big[\int_{D}\int_{D}\big((u_\theta^0)^+(y)- u_\eps^0(x)\big)^+\psi(0,x)\rho_m(x-y)\,dx\,dy\Big]\notag \\
       &= \mathbb{E}\Big[\int_{D}\int_{D}\big((u_\theta^0)^+(y)- (u_\eps^0)^+(x)\big)^+\psi(0,x)\rho_m(x-y)\,dx\,dy\Big] - \mathbb{E}\Big[\int_{D}\text{sgn}^+((u_\eps^0)^-)u_\eps^0(x)\psi(0,x)\bar{\rho}_m(x)\,dx\Big],\notag
       \fi 
       %%%%%%%%%%%%%%%%%%%%%%
Thus, using \eqref{identity-for-dt-psi-0}, we have the following lemma.
\begin{lem}\label{lem:H1-G1}
It holds that,
  \begin{align}
&\underset{\xi \rightarrow 0}{\lim}\,\underset{\tilde{\xi} \rightarrow 0}{\lim}\,\underset{l \rightarrow \infty}{\lim}\,\underset{\kappa \rightarrow 0}{\lim}\,\underset{n \rightarrow \infty}{\lim} (\mathcal{G}_1 + \mathcal{H}_1)\notag \\ & =\int_{D}\int_{D}\big((u_\theta^0)^+(y)- (u_\eps^0)^+(x)\big)^+\psi(0,x)\rho_m(x-y)\,dx\,dy - \int_{D}\text{sgn}^+((u_\eps^0)^-)u_\eps^0(x)\psi(0,x)\bar{\rho}_m(x)\,dx\,.\notag 
\end{align}  
\end{lem}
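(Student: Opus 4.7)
The plan is to treat $\mathcal{G}_1$ and $\mathcal{H}_1$ separately and then combine them via the identity \eqref{identity-for-dt-psi-0}. The term $\mathcal{G}_1$ is handled instantly: since $\operatorname{supp}(\rho_n) \subset [-2/n, 0]$ while the outer time variable $t$ ranges over $(0,T)$, the integrand vanishes almost everywhere on the integration domain, so $\mathcal{G}_1 = 0$ for every $n$ and no limit is required.

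For $\mathcal{H}_1$, I would peel off the small parameters in the prescribed order. Sending $n \to \infty$ first, the kernel $\rho_n(-s)$ acts as a one-sided approximation of the Dirac mass at $s=0^+$, so it evaluates $u_\theta^\kappa$ at the initial time. The required one-sided continuity $u_\theta^\kappa(s,\cdot) \to u_\theta^\kappa(0,\cdot)$ in $L^2(D)$ as $s \to 0^+$ is inherited from the equation for $u_\theta^\kappa$, together with the continuity of the Brownian and compensated Poisson stochastic integrals. Lipschitz continuity of $\beta_{\tilde{\xi}}$ and $J_l$, plus dominated convergence, then yield the first limit. The step $\kappa \to 0$ follows from $u_\theta^\kappa(0,\cdot) = u_\theta^0 \ast \rho_\kappa \to u_\theta^0$ pointwise a.e.\ and in $L^2(D)$, and $l \to \infty$ collapses the $k$-integral at $k = \beta_{\tilde{\xi}}(u_\theta^0(y))$, giving
\[
\bar{\mathcal{H}}_1 = \mathbb{E}\Big[\int_D \int_D \beta_\xi\big(\beta_{\tilde{\xi}}(u_\theta^0(y)) - u_\eps^0(x)\big)\psi(0,x)\rho_m(x-y)\,dx\,dy\Big].
\]
Next I would use $|\beta_\xi(r)-r^+| \le C\xi$ from \eqref{inq:for-beta-xi} to pass $\tilde{\xi} \to 0$ (replacing $\beta_{\tilde{\xi}}(u_\theta^0)$ by $(u_\theta^0)^+$ inside the continuous map $\beta_\xi$), and then $\xi \to 0$ to replace the outer $\beta_\xi$ by its positive-part limit. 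Finally, identity \eqref{identity-for-dt-psi-0} with $a = u_\theta^0(y)$ and $b = u_\eps^0(x)$ splits the integrand as
\[
((u_\theta^0)^+(y) - u_\eps^0(x))^+ = ((u_\theta^0)^+(y) - (u_\eps^0)^+(x))^+ - \text{sgn}^+((u_\eps^0)^-(x))\,u_\eps^0(x).
\]
The second summand is independent of $y$, so integrating $\rho_m(x-y)$ over $y \in D$ produces the factor $\bar{\rho}_m(x)$; combined with $\mathcal{G}_1 = 0$ this yields exactly the claimed right-hand side.

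The main obstacle is the $n \to \infty$ step: because $\rho_n$ sits entirely on one side of $s=0$, one cannot invoke a two-sided Lebesgue-point argument, and the justification hinges on one-sided time continuity at the initial time for the mollified viscous solution. The remaining limits in $\kappa$, $l$, $\tilde{\xi}$, $\xi$ are routine, relying on standard mollifier properties, the uniform approximation bound in \eqref{inq:for-beta-xi}, and dominated convergence controlled by the a-priori bound \eqref{inq:apriori-bound}.
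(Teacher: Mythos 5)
Your proposal is correct and mirrors the paper's own argument: $\mathcal{G}_1=0$ by the support of $\rho_n$, the limits $n\to\infty$, $\kappa\to 0$, $l\to\infty$, $\tilde{\xi}\to 0$, $\xi\to 0$ are taken in the same order to reduce $\mathcal{H}_1$ to $\int_D\int_D\big((u_\theta^0)^+(y)-u_\eps^0(x)\big)^+\psi(0,x)\rho_m(x-y)\,dx\,dy$, and the identity \eqref{identity-for-dt-psi-0} then splits this into the two claimed terms, with the $y$-integration of $\rho_m(x-y)$ producing $\bar{\rho}_m(x)$. Your additional observation about the one-sided (right) time-continuity of $u_\theta^\kappa$ at $s=0$, needed because $\rho_n$ is supported on one side, is exactly the justification the paper leaves implicit under its reference to ``conventional methods.''
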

%%%%%%%%%%%%%%%%%%%%%%%%%%%%%%%%%%%
\noindent Since $(\partial_t + \partial_s)\rho_n(t-s) = 0$, one has
\begin{align*}
   \mathcal{G}_2 + \mathcal{H}_2 =\mathbb{E}\Big[\int_{\Pi^2}\int_{\R}\beta_\xi\big(k-u_\eps(t,x)\big)\partial_t\psi(t,x)\rho_n(t-s)\rho_m(x-y)J_l\big(k -\beta_{\Tilde{\xi}}(u_\theta^\kappa\big)\,dk\,dx\,dy\,ds\,dt\Big],\notag
\end{align*}
and hence, passing to the limits, we get
\begin{align*}
& \underset{l \rightarrow \infty}{\lim}\,\underset{\kappa \rightarrow 0}{\lim}\,\underset{n \rightarrow \infty}{\lim}\,(\mathcal{G}_2 + \mathcal{H}_2) \notag \\
&=  \,\mathbb{E}\Big[\int_{\Pi}\int_{D}\beta_\xi\big(\beta_{\Tilde{\xi}}(u_\theta(t,y))-u_\eps(t,x)\big)\partial_t\psi(t,x)\rho_m(x-y)\,dx\,dy\,dt\Big] =: \bar{\mathcal{H}}_2\,, \notag \\
& \underset{\xi \rightarrow 0}{\lim}\,\underset{\tilde{\xi} \rightarrow 0}{\lim} \bar{\mathcal{H}}_2
=\mathbb{E}\Big[\int_{\Pi}\int_{D}\big(u_\theta^+(t,y)-u_\eps(t,x)\big)^+\partial_t\psi(t,x)\rho_m(x-y)\,dx\,dy\,dt\Big]\,.
\end{align*}
%%%%%%%%%%%%%%%%%%%%%%%
\iffalse 
\begin{align*}
    \underset{l \rightarrow \infty}{\lim}\,\underset{\kappa \rightarrow 0}{\lim}\,\underset{n \rightarrow \infty}{\lim}\,(\mathcal{G}_2 + \mathcal{H}_2)&=  \,\mathbb{E}\Big[\int_{\Pi}\int_{D}\beta_\xi\big(\beta_{\Tilde{\xi}}(u_\theta(t,y))-u_\eps(t,x)\big)\partial_t\psi(t,x)\rho_m(x-y)\,dx\,dy\,dt\Big] \notag \\
    &\underset{\Tilde{\xi} \rightarrow 0}{\longrightarrow} \mathbb{E}\Big[\int_{\Pi}\int_{D}\beta_\xi\big(u_\theta^+(t,y)-u_\eps(t,x)\big)\partial_t\psi(t,x)\rho_m(x-y)\,dx\,dy\,dt\Big] \notag \\
     &\underset{\xi \rightarrow 0}{\longrightarrow} \mathbb{E}\Big[\int_{\Pi}\int_{D}\big(u_\theta^+(t,y)-u_\eps(t,x)\big)^+\partial_t\psi(t,x)\rho_m(x-y)\,dx\,dy\,dt\Big]\,\notag \\
     & = \mathbb{E}\Big[\int_{\Pi}\int_{D}\big(u_\theta^+(t,y)-u_\eps^+(t,x)\big)^+\partial_t\psi(t,x)\rho_m(x-y)\,dx\,dy\,dt\Big]\notag \\ & \hspace{1cm}  -\mathbb{E}\Big[\int_{\Pi}\text{sgn}^+(u_\eps^-(t,x))u_\eps(t,x)\partial_t\psi(t,x)\bar{\rho}_m(x)\,dx\,dt\Big]\,. \notag
\end{align*}
\fi
%%%%%%%%%%%%%%%%%%%%%%%%%%%
Moreover, thanks to \eqref{identity-for-dt-psi-0}, we have the following result.
\begin{lem}\label{lem:G-2-H-3} It holds that,
    \begin{align}
        \underset{\xi \rightarrow 0}{\lim}\,\underset{\Tilde{\xi} \rightarrow 0}{\lim}\,\underset{l \rightarrow \infty}{\lim}\,\underset{\kappa \rightarrow 0}{\lim}\,\underset{n \rightarrow \infty}{\lim}\,(\mathcal{G}_2 + \mathcal{H}_2) &=   \mathbb{E}\Big[\int_{\Pi}\int_{D}\big(u_\theta^+(t,y)-u_\eps^+(t,x)\big)^+\partial_t\psi(t,x)\rho_m(x-y)\,dx\,dy\,dt\Big]\notag \\ & \hspace{1cm}  -\mathbb{E}\Big[\int_{\Pi}\text{sgn}^+(u_\eps^-(t,x))u_\eps(t,x)\partial_t\psi(t,x)\bar{\rho}_m(x)\,dx\,dt\Big]\,. \notag
%\\&\mathbb{E}\Big[\int_{\Pi}\int_{D}\text{sgn}^+\big((u_\eps^-(t,x)\big)u_\eps^(t,x)\partial_t\psi(t,x)\rho_m(x-y)\,dx\,dy\,dt\Big].\notag
    \end{align}
\end{lem}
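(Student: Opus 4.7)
The plan is to build on the reduction sketched just above the lemma statement. The identity $(\partial_t+\partial_s)\rho_n(t-s)=0$, together with the equality
$$\int_\R \beta_\xi\big(\beta_{\tilde\xi}(u_\theta^\kappa)-k\big)\, J_l\big(u_\eps-k\big)\,dk \;=\; \int_\R \beta_\xi\big(k-u_\eps\big)\, J_l\big(k-\beta_{\tilde\xi}(u_\theta^\kappa)\big)\,dk,$$
obtained by a linear change of variable in $k$, forces the $\psi\,\partial_s\rho_n$ contribution of $\mathcal{G}_2$ to cancel the $\psi\,\partial_t\rho_n$ contribution of $\mathcal{H}_2$. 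This leaves $\mathcal{G}_2+\mathcal{H}_2$ with only the $\partial_t\psi(t,x)\,\rho_n(t-s)$ factor, as displayed just before the lemma.

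Next, I would pass to the limits in the prescribed order $n\to\infty$, $\kappa\to 0$, $l\to\infty$: the temporal mollifier $\rho_n(t-s)$ collapses via continuity of translation in $L^2$, sending $s$ to $t$; the convergence $u_\theta^\kappa\to u_\theta$ in $L^2(D)$ (from \cite{Blouza-SIAM-2001,Girault-1999}) combined with the Lipschitz continuity of $\beta_{\tilde\xi}$ handles $\kappa$; and $J_l$, being an approximate Dirac, pins $k=\beta_{\tilde\xi}(u_\theta(t,y))$, yielding the intermediate expression $\bar{\mathcal{H}}_2$ of the excerpt. All three passages are dominated-convergence arguments powered by \eqref{inq:for-beta-xi}, by $J_l\ge 0$ with $\int J_l=1$, and by the a-priori bound \eqref{inq:apriori-bound}. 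I would then let $\tilde\xi\to 0$, using $\beta_{\tilde\xi}(r)\to r^+$ pointwise with $|\beta_{\tilde\xi}(r)|\le |r|$ dominated by $u_\theta\in L^2$; and finally $\xi\to 0$ via \eqref{inq:for-beta-xi} to produce the integrand $\big(u_\theta^+(t,y)-u_\eps(t,x)\big)^+$.

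The last step is to apply identity \eqref{identity-for-dt-psi-0} with $a=u_\theta(t,y)$ and $b=u_\eps(t,x)$ to write
$$\big(u_\theta^+(t,y)-u_\eps(t,x)\big)^+ = \big(u_\theta^+(t,y)-u_\eps^+(t,x)\big)^+ - \text{sgn}^+\big(u_\eps^-(t,x)\big)\,u_\eps(t,x).$$
Substituting into the limiting expression, the first piece reproduces the first summand in the claim. In the second piece the integrand does not depend on $y$, so integrating $\rho_m(x-y)$ over $y\in D$ produces the factor $\bar{\rho}_m(x)$, giving the second summand exactly as stated.

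The main obstacle is really only the first step: verifying the $k$-integral identity is necessary because the asymmetry of $\beta_\xi$ (its arguments appear differently in $\mathcal{G}_2$ and $\mathcal{H}_2$) could otherwise block the cancellation induced by $(\partial_t+\partial_s)\rho_n(t-s)=0$. Once the two $k$-integrals are shown to coincide, the remainder is a routine sequence of dominated-convergence passages driven by the bounds collected in Section \ref{sec:Technical Framework } together with \eqref{inq:apriori-bound}.
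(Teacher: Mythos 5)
Your proposal is correct and follows essentially the same route as the paper: the cancellation via $(\partial_t+\partial_s)\rho_n(t-s)=0$, the limit passages in the order $n\to\infty$, $\kappa\to 0$, $l\to\infty$, $\tilde\xi\to 0$, $\xi\to 0$ yielding $\big(u_\theta^+(t,y)-u_\eps(t,x)\big)^+$, and the final application of identity \eqref{identity-for-dt-psi-0} with the $y$-integration of $\rho_m$ producing $\bar\rho_m(x)$. The only difference is that you make explicit the change-of-variable identity in $k$ that justifies the cancellation of the $\psi\,\partial_s\rho_n$ and $\psi\,\partial_t\rho_n$ contributions, a step the paper leaves implicit; this is a worthwhile clarification but not a different argument.
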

Regarding the terms associated with the flux functions $\mathcal{G}_3$, $\mathcal{G}_4$, and $\mathcal{H}_3$, we have  
\begin{align}
       &\underset{l \rightarrow \infty}{\lim}\,\underset{\kappa \rightarrow 0}{\lim}\,\underset{n \rightarrow \infty}{\lim}\,(\mathcal{G}_3 + \mathcal{G}_4) \notag \\
       %%%%%%%%%%%%%%%%
       \iffalse 
       = & - \mathbb{E}\Big[\int_{\Pi}\int_{D}\beta_\xi^{\prime}\big(\beta_{\Tilde{\xi}}(u_\theta(t,y))-u_\eps(t,x)\big)\beta_{\Tilde{\xi}}^\prime(u_\theta(t,y))F(u_\theta(t,y))\psi(t,x)\nabla_y\rho_m(x-y)\,dy\,dx\,dt\Big]\notag \\
&- \mathbb{E}\Big[\int_{\Pi^2}\int_{\R}\Big(\beta_\xi^{\prime\prime}\big(\beta_{\Tilde{\xi}}(u_\theta(t,y))-u_\eps(t,x)\big)|\beta_{\Tilde{\xi}}^\prime(u_\theta(t,y))|^2  \notag \\&\hspace{1cm}+ \beta_\xi^{\prime}\big(\beta_{\Tilde{\xi}}(u_\theta(t,y)) -u_\eps(t,x)\big)\beta_{\Tilde{\xi}}^{\prime\prime}(u_\theta(t,y))\Big) \nabla u_\theta(t,y) F(u_\theta(t,y))\psi(t,x)\rho_m(x-y)\,dy\,dx\,dt\Big]\notag \\
\fi 
%%%%%%%%%%%%%%%%%%%%%%%%%%%%%%
 = & - \mathbb{E}\Big[\int_{\Pi}\int_{D}\int_{u_\eps(t,x)}^{u_\theta(t,y)}\beta_{\Tilde{\xi}}^{\prime}(r)\beta_\xi^{\prime}\big(\beta_{\Tilde{\xi}}(r)-u_\eps(t,x)\big)F^\prime(r)\,dr\, \psi(t,x)\nabla_y\rho_m(x-y)\,dy\,dx\,dt\Big]\notag  \\
& \underset{\Tilde{\xi}\rightarrow 0}{\longrightarrow} \, - \mathbb{E}\Big[\int_{\Pi}\int_{D}\int_{u_\eps(t,x)}^{u_\theta(t,y)}\text{sgn}^+(r)\beta_\xi^{\prime}\big(r^+-u_\eps(t,x)\big)F^\prime(r)\,dr\, \psi(t,x)\nabla_y\rho_m(x-y)\,dy\,dx\,dt\Big]\notag \\
& \underset{\xi \rightarrow 0}{\longrightarrow} \, - \mathbb{E}\Big[\int_{\Pi}\int_{D}\int_{u_\eps(t,x)}^{u_\theta(t,y)}\text{sgn}^+(r)\text{sgn}^+\big(r^+ -u_\eps(t,x)\big)F^\prime(r)\,dr\, \psi(t,x)\nabla_y\rho_m(x-y)\,dy\,dx\,dt\Big]\notag\\
&= - \mathbb{E}\Big[\int_{\Pi}\int_{D}\int_{u_\eps^+(t,x)}^{u_\theta^+(t,y)}\text{sgn}^+\big(r^+ -u_\eps(t,x)\big)F^\prime(r)\,dr\, \psi(t,x)\nabla_y\rho_m(x-y)\,dy\,dx\,dt\Big]\notag\\
 &= -\mathbb{E}\Big[\int_{\Pi}\int_{D}\text{sgn}^+\big(u_\theta^+(t,y) -u_\eps(t,x)\big)\big(F(u_\theta^+)-F(u_\eps^+)\big)\, \psi(t,x)\nabla_y\rho_m(x-y)\,dy\,dx\,dt\Big]\,,\label{inq:G3G4} \\
& \underset{l \rightarrow \infty}{\lim}\,\underset{\kappa \rightarrow 0}{\lim}\,\underset{n \rightarrow \infty}{\lim} \mathcal{H}_3 \notag \\ = &\,\mathbb{E}\Big[\int_{\Pi}\int_{D}\int_{\beta_{\Tilde{\xi}}(u_\theta(t,y))}^{u_\eps(t,x)}\beta_{\xi}^\prime(\beta_{\Tilde{\xi}}(u_\theta(t,y))-r)F^\prime(r)\,dr\, \nabla_x\big(\psi(t,x)\rho_m(x-y)\big)\,dx\,dy\,dt\Big] \notag \\
&\underset{\tilde{\xi} \rightarrow 0}{\longrightarrow}\, \mathbb{E}\Big[\int_{\Pi}\int_{D}\int_{u_\theta^+(t,y)}^{u_\eps(t,x)}\beta_{\xi}^\prime(u_\theta^+(t,y)-r)F^\prime(r)\,dr\, \nabla_x\big(\psi(t,x)\rho_m(x-y)\big)\,dx\,dy\,dt\Big] \notag \\
&\underset{\xi \rightarrow 0}{\longrightarrow} \, \mathbb{E}\Big[\int_{\Pi}\int_{D}\int_{u_\theta^+(t,y)}^{u_\eps(t,x)}\text{sgn}^+(u_\theta^+(t,y)-r)F^\prime(r)\,dr\, \nabla_x\big(\psi(t,x)\rho_m(x-y)\big)\,dx\,dy\,dt\Big] \notag \\
= & - \mathbb{E}\Big[\int_{\Pi}\int_{D}\text{sgn}^+(u_\theta^+(t,y)-u_\eps(t,x))\big(F(u_\theta^+(t,y)) - F(u_\eps(t,x))\big)\, \nabla_x\big(\psi(t,x)\rho_m(x-y)\big)\,dx\,dy\,dt\Big]\notag \\
= & - \mathbb{E}\Big[\int_{\Pi}\int_{D}\text{sgn}^+(u_\theta^+(t,y)-u_\eps^+(t,x))\big(F(u_\theta^+(t,y)) - F(u_\eps^+(t,x))\big)\, \nabla_x\big(\psi(t,x)\rho_m(x-y)\big)\,dx\,dy\,dt\Big]\notag \\
& + \mathbb{E}\Big[\int_{\Pi}\int_{D}\text{sgn}^+(u_\eps^-(t,x))F(u_\eps(t,x)) \nabla_x\big(\psi(t,x)\rho_m(x-y)\big)\,dx\,dy\,dt\Big]\,,\label{inq:H3}
\end{align}
thanks to \eqref{identity-for-dt-psi-01}. Combining \eqref{inq:H3} and \eqref{inq:G3G4} and using the fact that $\nabla_y\rho_m(x-y) = - \nabla_x\rho_m(x-y)$ and the identity
$$\text{sgn}^+(a^+-b^+)(F(a^+) - F(b^+)) = \text{sgn}^+(a^+-b)(F(a^+)-F(b^+)),~~a,b\in \R\,,$$
we have the following result.
%%%%%%%%%%%%%%%%%%%%%%%%%%%%
\iffalse 
Combining \eqref{inq:H3} and \eqref{inq:G3G4} and using the fact that $\nabla_y\rho_m(x-y) = - \nabla_x\rho_m(x-y)$, we infer
\begin{align}
  &\underset{\xi, \Tilde{\xi} \rightarrow 0}{\lim}\,\underset{l \rightarrow \infty}{\lim}\,\underset{\kappa \rightarrow 0}{\lim}\,\underset{n \rightarrow \infty}{\lim}\big(\mathcal{G}_3 + \mathcal{G}_4 + \mathcal{H}_3\big)\notag \\ & =  \mathbb{E}\Big[\int_{\Pi}\int_{D}\big(\text{sgn}^+(u_\theta^+(t,y)-u_\eps^+(t,x)) - \text{sgn}^+(u_\theta^+(t,y)-u_\eps(t,x))\big)\notag \\ & \hspace{6cm}\times\big(F(u_\theta^+(t,y) - F(u_\eps^+(t,x))\big)\, \psi(t,x)\nabla_x\rho_m(x-y)\,dx\,dy\,dt\Big]\notag \\  & - \mathbb{E}\Big[\int_{\Pi}\int_{D}\text{sgn}^+(u_\theta^+(t,y)-u_\eps^+(t,x))\big(F(u_\theta^+(t,y) - F(u_\eps^+(t,x))\big)\, \nabla_x\psi(t,x)\,dx\,dy\,dt\Big]\notag \\
& + \mathbb{E}\Big[\int_{\Pi}\int_{D}\text{sgn}^+(u_\eps^-(t,x))F(u_\eps(t,x)) \nabla_x\big(\psi(t,x)\rho_m(x-y)\big)\,dx\,dy\,dt\Big]\label{inq:g3g4h3}
\end{align}
Observe that in \eqref{inq:g3g4h3}, $\text{sgn}^+(u_\theta^+(t,y)-u_\eps^+(t,x)) = \text{sgn}^+(u_\theta^+(t,y)-u_\eps(t,x))$. Thus, we have the following result.
\fi 
%%%%%%%%%%%%%%%%%%%%%%%%%%%%%%%%%%%%%%%%%%%%%%%%%%%%%%%%%%%%%%%%%%%%%%%%
\begin{lem} %\label{lem:G-4-G-5-H-4}
We have,
    \begin{align} 
    &\underset{\xi, \Tilde{\xi} \rightarrow 0}{\lim}\,\underset{l \rightarrow \infty}{\lim}\,\underset{\kappa \rightarrow 0}{\lim}\,\underset{n \rightarrow \infty}{\lim}\big(\mathcal{G}_3 + \mathcal{G}_4 + \mathcal{H}_3\big)\notag \\ = &  
    - \mathbb{E}\Big[\int_{\Pi}\int_{D}\text{sgn}^+(u_\theta^+(t,y)-u_\eps^+(t,x))\big(F(u_\theta^+(t,y)) - F(u_\eps^+(t,x))\big)\, \nabla_x\psi(t,x)\,dx\,dy\,dt\Big]\notag \\
& + \mathbb{E}\Big[\int_{\Pi}\int_{D}\text{sgn}^+(u_\eps^-(t,x))F(u_\eps(t,x)) \nabla_x\big(\psi(t,x)\rho_m(x-y)\big)\,dx\,dy\,dt\Big]\,.\notag
\end{align}
\end{lem}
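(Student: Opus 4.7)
The plan is to combine the two limit expressions already computed in the excerpt — namely the $\xi, \tilde{\xi}\to 0$ limit of \eqref{inq:G3G4} for $\mathcal{G}_3 + \mathcal{G}_4$ and the $\xi,\tilde{\xi}\to 0$ limit of \eqref{inq:H3} for $\mathcal{H}_3$ — and then reorganise them using exactly the two tools that the author highlights before the statement: the anti-symmetry $\nabla_y \rho_m(x-y) = -\nabla_x \rho_m(x-y)$ and the semi-Kru\v{z}kov sign-identity
$$\text{sgn}^+(a^+ - b^+)\bigl(F(a^+) - F(b^+)\bigr) = \text{sgn}^+(a^+ - b)\bigl(F(a^+) - F(b^+)\bigr),\quad a,b\in\mathbb{R}.$$
The objective is a clean cancellation of all contributions carrying $\nabla_x \rho_m$, leaving only the $(\nabla_x\psi)\rho_m$ piece together with the ``boundary-type'' $\text{sgn}^+(u_\eps^-)$ remainder.

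Concretely, I would proceed in three steps. First, apply $\nabla_y \rho_m = -\nabla_x \rho_m$ to the $(\mathcal{G}_3+\mathcal{G}_4)$-limit to rewrite it as
$$+\,\mathbb{E}\Big[\int_{\Pi}\int_{D}\text{sgn}^+\bigl(u_\theta^+(t,y) - u_\eps(t,x)\bigr)\bigl(F(u_\theta^+) - F(u_\eps^+)\bigr)\,\psi(t,x)\,\nabla_x \rho_m(x-y)\,dy\,dx\,dt\Big].$$
Second, verify the sign-identity by a short case split on $b \equiv u_\eps(t,x)$: if $b \ge 0$ then $b^+ = b$ and the identity is immediate; if $b < 0$ then $b^+ = 0$ and $a^+ - b \ge a^+ = a^+ - b^+$, so both $\text{sgn}^+$-factors coincide with $\text{sgn}^+(a^+)$ and both sides collapse to $\text{sgn}^+(a^+)(F(a^+)-F(0))$. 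Applied with $a = u_\theta^+(t,y)$, $b = u_\eps(t,x)$ this lets me replace $\text{sgn}^+(u_\theta^+ - u_\eps)$ by $\text{sgn}^+(u_\theta^+ - u_\eps^+)$ in the rewritten $(\mathcal{G}_3+\mathcal{G}_4)$-limit without changing its value.

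Third, expand $\nabla_x(\psi \rho_m) = (\nabla_x \psi)\rho_m + \psi\,\nabla_x \rho_m$ in the first summand of the $\mathcal{H}_3$-limit in \eqref{inq:H3}. The $\psi\,\nabla_x \rho_m$-piece thereby produced is exactly the negative of the rewritten $(\mathcal{G}_3+\mathcal{G}_4)$-limit and cancels it. What survives is the $(\nabla_x\psi)\rho_m$-piece, which is the first term on the right-hand side of the claim, together with the $\text{sgn}^+(u_\eps^-)F(u_\eps)\nabla_x(\psi \rho_m)$-term from \eqref{inq:H3}, which did not enter the cancellation and furnishes the second term.

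The only genuinely delicate point is verifying the sign-function identity — the positive-part structure makes the two sign factors $\text{sgn}^+(u_\theta^+ - u_\eps)$ and $\text{sgn}^+(u_\theta^+ - u_\eps^+)$ superficially distinct, and one has to check that any potential discrepancy is absorbed by the zero of $(F(a^+)-F(b^+))$. Once that identity is in hand, the result follows by the direct algebraic cancellation outlined above.
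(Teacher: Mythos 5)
Your proposal is correct and follows essentially the same route as the paper: the paper likewise combines \eqref{inq:G3G4} and \eqref{inq:H3}, uses $\nabla_y\rho_m(x-y)=-\nabla_x\rho_m(x-y)$ together with the identity $\text{sgn}^+(a^+-b^+)\big(F(a^+)-F(b^+)\big)=\text{sgn}^+(a^+-b)\big(F(a^+)-F(b^+)\big)$, and lets the $\psi\,\nabla_x\rho_m$-contributions cancel, leaving exactly the $(\nabla_x\psi)\rho_m$-term and the $\text{sgn}^+(u_\eps^-)F(u_\eps)$-remainder. One small caveat in your case split: for $b<0$ and $a^+=0$ the two sign factors do \emph{not} coincide (then $\text{sgn}^+(a^+-b)=1\neq 0=\text{sgn}^+(a^+-b^+)$), but, as you yourself note at the end, the discrepancy is absorbed by $F(a^+)-F(b^+)=0$, so the identity and your proof stand.
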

Next, we focus on the degenerate terms. One has
\begin{align}
&\underset{l \rightarrow \infty}{\lim}\,\underset{\kappa \rightarrow 0}{\lim}\,\underset{n \rightarrow \infty}{\lim} \mathcal{G}_5\notag \\ &= - \mathbb{E}\Big[\int_{\Pi}\int_{D}\beta_\xi^{\prime}\big(\beta_{\Tilde{\xi}}(u_\theta(t,y))-u_\eps(t,x)\big)\beta_{\Tilde{\xi}}^\prime(u_\theta(t,y))\nabla\Phi(u_\theta(t,y))\psi(t,x)\nabla_y\rho_m(x-y)\,dy\,dx\,dt\Big] \notag \\
& = \mathbb{E}\Big[\int_{\Pi}\int_{D}\int_{u_\eps(t,x)}^{u_\theta(t,y)}\beta_{\Tilde{\xi}}^{\prime}(r)\beta_\xi^{\prime}\big(\beta_{\Tilde{\xi}}(r)-u_\eps(t,x)\big)\Phi^\prime(r)\,dr\,\psi(t,x)\Delta\rho_m(x-y)\,dy\,dx\,dt\Big]\notag \\
& \underset{\Tilde{\xi} \rightarrow 0}{\longrightarrow} \mathbb{E}\Big[\int_{\Pi}\int_{D}\int_{u_\eps(t,x)}^{u_\theta(t,y)}\text{sgn}^+(r)\beta_\xi^{\prime}\big(r^+-u_\eps(t,x)\big)\Phi^\prime(r)\,dr\,\psi(t,x)\Delta\rho_m(x-y)\,dy\,dx\,dt\Big] \notag \\
&\underset{\xi \rightarrow 0}{\longrightarrow}  \mathbb{E}\Big[\int_{\Pi}\int_{D}\int_{u_\eps(t,x)}^{u_\theta(t,y)}\text{sgn}^+(r)\text{sgn}^+\big(r^+-u_\eps(t,x)\big)\Phi^\prime(r)\,dr\,\psi(t,x)\Delta\rho_m(x-y)\,dy\,dx\,dt\Big]\notag \\
& = \mathbb{E}\Big[\int_{\Pi}\int_{D}\text{sgn}^+\big(u_\theta^+(t,y)-u_\eps(t,x)\big)\big(\Phi(u_\theta^+(t,y))-\Phi(u_\eps^+(t,x))\big)\,\psi(t,x)\Delta_y\rho_m(x-y)\,dy\,dx\,dt\Big]\,. \label{inq:g5}
\end{align}
Since $\beta_\xi^{\prime\prime}, \beta_\xi^\prime \ge 0$ and $\Phi$ is non-decreasing, we see that
\begin{align}
&\underset{l \rightarrow \infty}{\lim}\,\underset{\kappa \rightarrow 0}{\lim}\,\underset{n \rightarrow \infty}{\lim} \mathcal{G}_6 = - \mathbb{E}\Big[\int_{\Pi \times D}\beta_\xi^{\prime}\big(\beta_{\Tilde{\xi}}(u_\theta(t,y))-u_\eps(t,x)\big)\beta_{\Tilde{\xi}}^{\prime\prime}(u_\theta(t,y))|\nabla u_\theta(t,y)|^2 \notag \\
& \hspace{5cm} \times \Phi^\prime(u_\theta(t,y))\psi(t,x)\rho_m(x-y)\,dy\,dx\,dt\Big] \le 0\,. \label{inq:G6}
\end{align}

A standard argument reveals that
\begin{align}
&\underset{l \rightarrow \infty}{\lim}\,\underset{\kappa \rightarrow 0}{\lim}\,\underset{n \rightarrow \infty}{\lim}\mathcal{H}_4 \notag \\
%&= \mathbb{E}\Big[\int_{\Pi}\int_{D}\beta_\xi^\prime\big(\beta_{\Tilde{\xi}}(u_\theta(t,y))-u_\eps(t,x))\nabla\Phi(u_\eps(t,y))\nabla_x\big(\psi(t,x)\rho_m(x-y)\big)\,dy\,dx\,dt\Big]\notag \\
& = - \mathbb{E}\Big[\int_{\Pi}\int_{D}\int_{\beta_{\Tilde{\xi}}(u_\theta(t,y))}^{u_\eps(t,x)}\beta_\xi^\prime\big(\beta_{\Tilde{\xi}}(u_\theta(t,y))-r)\Phi^\prime(r)\,dr\,\Delta_x\big(\psi(t,x)\rho_m(x-y)\big)\,dy\,dx\,dt\Big]\notag \\
&  \underset{\Tilde{\xi} \rightarrow 0}{\longrightarrow} - \mathbb{E}\Big[\int_{\Pi}\int_{D}\int_{u_\theta^+(t,y)}^{u_\eps(t,x)}\beta_\xi^\prime\big(u_\theta^+(t,y)-r)\Phi^\prime(r)\,dr\Delta_x\big(\psi(t,x)\rho_m(x-y)\big)\,dy\,dx\,dt\Big]\notag \\
& \underset{\xi \rightarrow 0}{\longrightarrow} \mathbb{E}\Big[\int_{\Pi}\int_{D}\text{sgn}^+(u_\theta^+(t,y)-u_\eps(t,x))\big(\Phi(u_\theta^+(t,y))-\Phi(u_\eps(t,x))\big)\Delta_x\big(\psi(t,x)\rho_m(x-y)\big)\,dy\,dx\,dt\Big]\notag \\
& = \mathbb{E}\Big[\int_{\Pi}\int_{D}\text{sgn}^+(u_\theta^+(t,y)-u_\eps^+(t,x))\big(\Phi(u_\theta^+(t,y))-\Phi(u_\eps^+(t,x))\big)\Delta_x\big(\psi(t,x)\rho_m(x-y)\big)\,dy\,dx\,dt\Big]\notag \\
 & \quad -  \mathbb{E}\Big[\int_{\Pi}\int_{D}\text{sgn}^+(u_\eps^-(t,x))\Phi(u_\eps(t,x))\Delta_x\big(\psi(t,x)\rho_m(x-y)\big)\,dy\,dx\,dt\Big]\,, \label{inq:H4}
\end{align}
where in the last equality, we have used \eqref{identity-for-dt-psi-02}. Using the identity
\begin{align}\label{id:phi}
  \text{sgn}(a^+-b^+)(\Phi(a^+)-\Phi(b^+)) = \text{sgn}(a^+-b)(\Phi(a^+)-\Phi(b^+)),\quad \forall~a,b\in \R\,,  
\end{align}
and combining \eqref{inq:g5}, \eqref{inq:G6} and \eqref{inq:H4}, we get
\begin{align}\label{eq:g5h4}
   &\underset{\xi \rightarrow 0}{\lim}\,\underset{\Tilde{\xi} \rightarrow 0}{\lim}\,\underset{l \rightarrow 0}{\lim}\,\underset{\kappa \rightarrow 0}{\lim}\,\underset{n \rightarrow \infty}{\lim}\big(\mathcal{G}_{5} + \mathcal{G}_{6} + \mathcal{H}_{4}\big)\notag \\= &\, 2\,\mathbb{E}\Big[\int_{\Pi}\int_{D}\text{sgn}^+\big( u_\theta^+(t,y)-u_\eps^+(t,x)\big)\big(\Phi(u_\theta^+(t,y)) - \Phi(u_\eps^+(t,x))\big)\notag \\ & \hspace{4cm}\times\big(\psi(t,x)\Delta_y\rho_m(x-y)- \nabla \psi(t,x)\nabla_y\rho_m(x-y)\big)\,dy\,dx\,dt\Big]  \notag \\
   + &\,\mathbb{E}\Big[\int_{\Pi}\int_{D}\text{sgn}(u_\theta^+(t,y)-u_\eps^+(t,x))\big(\Phi( u_\theta^+(t,y))-\Phi(u_\eps^+(t,x))\big)\Delta\psi(t,x)\rho_m(x-y)\,dy\,dx\,dt\Big]\notag \\ \,
    -&\mathbb{E}\Big[\int_{\Pi}\int_{D}\text{sgn}^+(u_\eps^-(t,x))\Phi(u_\eps(t,x))\Delta_x\big(\psi(t,x)\rho_m(x-y)\big)\,dy\,dx\,dt\Big] \notag \\ := &\sum_{i=1}^{2}\Hat{\mathcal{G}}_{i} - \mathbb{E}\Big[\int_{\Pi}\int_{D}\text{sgn}^+(u_\eps^-(t,x))\Phi(u_\eps(t,x))\Delta_x\big(\psi(t,x)\rho_m(x-y)\big)\,dy\,dx\,dt\Big].
\end{align}
Thanks to the properties of Lebesgue point theorem and mollification, one can pass to the limit in the terms $\mathcal{G}_0$ and $\mathcal{H}_0$ as done in \cite[Lemma 4.1]{Majee-2019} and \cite[Lemma 3.4]{Bauzet-2014}. In fact, we have
\begin{align}\label{eq:g01-h01}
   & \underset{\Tilde{\xi} \rightarrow 0}{\lim}\,\underset{l \rightarrow \infty}{\lim}\,\underset{\kappa \rightarrow 0}{\lim}\,\underset{n \rightarrow \infty}{\lim}\big(\mathcal{G}_0 + \mathcal{H}_0\big) \notag \\
   & = \mathbb{E}\Big[\int_{\Pi}\int_{D}\beta_\xi^{\prime\prime}\big(u_\theta^+(t,y)-u_\eps(t,x)\big)|\nabla u_\theta^+(t,y)|^2\Phi^\prime(u_\theta^+(t,y))\psi(t,x)\rho_m(x-y)\,dy\,dx\,dt\Big]\notag \\
   & + \mathbb{E}\Big[\int_{\Pi}\int_{D}\beta_\xi^{\prime\prime}(u_\theta^+(t,y)-u_\eps(t,x))\Phi^\prime(u_\eps(t,x))|\nabla u_\eps(t,x)|^2\psi(t,x)\rho_m(x-y)\,dx\,dy\,dt\Big]\notag \\
   & = \mathbb{E}\Big[\int_{\Pi}\int_{D}\beta_\xi^{\prime\prime}\big(u_\theta^+(t,y)-u_\eps(t,x)\big)\text{sgn}^+(u_\eps(t,x))|\nabla u_\theta^+(t,y)|^2\Phi^\prime(u_\theta^+(t,y))\psi(t,x)\rho_m(x-y)\,dy\,dx\,dt\Big]\notag \\
& +\mathbb{E}\Big[\int_{\Pi}\int_{D}\beta_\xi^{\prime\prime}\big(u_\theta^+(t,y)-u_\eps(t,x)\big)(1 -\text{sgn}^+(u_\eps(t,x)))|\nabla u_\theta^+(t,y)|^2\Phi^\prime(u_\theta^+(t,y))\psi(t,x)\rho_m(x-y)\,dy\,dx\,dt\Big]\notag \\
   & + \mathbb{E}\Big[\int_{\Pi}\int_{D}\beta_\xi^{\prime\prime}(u_\theta^+(t,y)-u_\eps(t,x))\text{sgn}^+(u_\eps(t,x))\Phi^\prime(u_\eps(t,x))|\nabla u_\eps(t,x)|^2\psi(t,x)\rho_m(x-y)\,dx\,dy\,dt\Big]\notag\\
   & + \mathbb{E}\Big[\int_{\Pi}\int_{D}\beta_\xi^{\prime\prime}(u_\theta^+(t,y)-u_\eps(t,x))\big(1 - \text{sgn}^+(u_\eps(t,x))\big)\Phi^\prime(u_\eps(t,x))|\nabla u_\eps(t,x)|^2\psi(t,x)\rho_m(x-y)\,dx\,dy\,dt\Big]\notag\\
   & \ge \mathbb{E}\Big[\int_{\Pi}\int_{D}\beta_\xi^{\prime\prime}\big(u_\theta^+(t,y)-u_\eps(t,x)\big)\text{sgn}^+(u_\eps(t,x))|\nabla u_\theta^+(t,y)|^2\Phi^\prime(u_\theta^+(t,y))\psi(t,x)\rho_m(x-y)\,dy\,dx\,dt\Big]\notag \\
   &+ \mathbb{E}\Big[\int_{\Pi}\int_{D}\beta_\xi^{\prime\prime}(u_\theta^+(t,y)-u_\eps(t,x))\text{sgn}^+(u_\eps(t,x))\Phi^\prime(u_\eps(t,x))|\nabla u_\eps(t,x)|^2\psi(t,x)\rho_m(x-y)\,dx\,dy\,dt\Big]\, 
   \notag \\ & := \mathcal{G}_{0,1} + \mathcal{H}_{0,1}.
\end{align}
Thanks to the fact that $a^2 + b^2 \ge 2ab,$ for any $a,b \in \R$, one has
\begin{align}
    &\mathcal{G}_{0,1} + \mathcal{H}_{0,1} \ge 2\, \mathbb{E}\Big[\int_{\Pi}\int_{D}\beta_\xi^{\prime\prime}(u_\theta^+(t,y)-u_\eps(t,x))\text{sgn}^+(u_\eps(t,x))\sqrt{\Phi^\prime(u_\eps(t,x))}\nabla u_\eps(t,x)\notag \\ & \hspace{3cm}\times\sqrt{\Phi^\prime(u_\theta^+(t,y))}\nabla u_\theta^+(t,y)\psi(t,x)\rho_m(x-y)\,dx\,dy\,dt\Big] \notag \\
    & =  2\,\mathbb{E}\Big[\int_{\Pi}\int_{D}\sqrt{\Phi^\prime(u_\theta^+(t,y))}\nabla u_\theta^+(t,y)\nabla_x\Big(\int_{u_\theta^+(t,y)}^{u_\eps(t,x)}\beta_\xi^{\prime\prime}(u_\theta^+(t,y)-\sigma)\text{sgn}^+(\sigma)\sqrt{\Phi^\prime(\sigma)}\,d\sigma\Big) \notag \\ & \hspace{4cm}\times \psi(t,x)\rho_m(x-y)\,dx\,dy\,dt\Big]\notag \\
    & =  - 2\,\mathbb{E}\Big[\int_{\Pi}\int_{D}\sqrt{\Phi^\prime(u_\theta^+(t,y))}\nabla u_\theta^+(t,y)\Psi(u_\theta^+(t,y), u_\eps(t,x))\nabla_x\big(\psi(t,x)\rho_m(x-y)\big)\,dx\,dy\,dt\Big] =: \hat{H}\,,\notag
\end{align}
where $\Psi(a,b) = \displaystyle \int_{a}^{b}\beta_\xi^{\prime\prime}(a-\sigma)\text{sgn}^+(\sigma)\sqrt{\Phi^\prime(\sigma)}\,d\sigma.$ Observe that $\Psi(a,b)$ is bounded and $a \mapsto \Psi(a,b)$ is continuous. Thus, using \cite[Lemma 5.3]{Bauzet-2015}, we have
\begin{align}
    \hat{H} & = - 2\,\mathbb{E}\Big[\int_{\Pi}\int_{D}\nabla_y \Big( \int_{u_\eps(t,x)}^{u_\theta^+(t,y)}\sqrt{\Phi^\prime(\mu)}\Psi(\mu, u_\eps(t,x))\,d\mu\Big)\cdot\nabla_x\big(\psi(t,x)\rho_m(x-y)\big)\,dx\,dy\,dt\Big]\notag \\
    & = 2\,\mathbb{E}\Big[\int_{\Pi}\int_{D}\int_{u_\eps(t,x)}^{ u_\theta^+(t,y)}\int_{\mu}^{u_\eps(t,x)}\beta_\xi^{\prime\prime}(\mu -\sigma)\text{sgn}^+(\sigma)\sqrt{\Phi'(\sigma)}\,d\sigma \sqrt{\Phi'(\mu)}\,d\mu\,\notag \\ & \hspace{6cm}\times \text{div}_y\big(\nabla_x(\psi(t,x)\rho_m(x-y))\big)\,dy\,dx\,dt\Big]\,. \notag 
\end{align}
Thanks to Lemma \ref{lem:to-deal-with-degnerate-term}, we have 
\begin{align}\label{eq:sneding-xi-0}
  \underset{\xi \rightarrow 0}{\lim} \int_{\mu}^{u_\eps(t,x)}\beta_\xi^{\prime\prime}(\mu -\sigma)\text{sgn}^+(\sigma)\sqrt{\Phi'(\sigma)}\,d\sigma  = -\text{sgn}^+\big(\mu - u_\eps(t,x)\big)\text{sgn}^+(\mu)\sqrt{\Phi'(\mu)}.  
\end{align}
We use \eqref{eq:sneding-xi-0} and the generalized Fatou's lemma to have
\begin{align}
    &\underset{\xi \rightarrow 0}{\liminf} \big(\mathcal{G}_{0,1} + \mathcal{H}_{0,1}\big)\notag \\  &\ge 
-2\,\mathbb{E}\Big[\int_{\Pi}\int_{D}\int_{u_\eps(t,x)}^{u_\theta^+(t,y)}\text{sgn}^+(\mu)\text{sgn}^+(\mu -u_\eps(t,x))\Phi'(\mu)\,d\mu\,\text{div}_y\big(\nabla_x(\psi(t,x)\rho_m(x-y))\big)\,dy\,dx\,dt\Big]\notag \\ 
& = -2\,\mathbb{E}\Big[\int_{\Pi}\int_{D}\int_{u_\eps^+(t,x)}^{u_\theta^+(t,y)}\text{sgn}^+(\mu)\text{sgn}^+(\mu -u_\eps(t,x))\Phi'(\mu)\,d\mu\,\text{div}_y\big(\nabla_x(\psi(t,x)\rho_m(x-y))\big)\,dy\,dx\,dt\Big]\notag \\
&-2\,\mathbb{E}\Big[\int_{\Pi}\int_{D}\int_{u_\eps(t,x)}^{u_\eps^+(t,y)}\text{sgn}^+(\mu)\text{sgn}^+(\mu -u_\eps(t,x))\Phi'(\mu)\,d\mu\,\text{div}_y\big(\nabla_x(\psi(t,x)\rho_m(x-y))\big)\,dy\,dx\,dt\Big]\notag \\
&=: \sum_{i=1}^2\hat{H}_i. \notag 
\end{align}
%%%%%%%%%%%%%%%%%%
\iffalse 
Observe that if $u_\eps \ge 0$ then $\hat{H}_2 = 0.$ When $u_\eps < 0$, one has 
\begin{align*}
    \hat{H}_{2} =-2\,\mathbb{E}\Big[\int_{\Pi \times D}\int_{-u_\eps^-(t,x)}^{0}\text{sgn}^+(\mu)\text{sgn}^+(\mu -u_\eps(t,x))\Phi'(\mu)\,d\mu\,\text{div}_y\big(\nabla_x(\psi(t,x)\rho_m(x-y))\big)\,dy\,dx\,dt\Big] = 0\notag.
\end{align*}
\fi
%%%%%%%%%
It is easy to see that $\hat{H}_2 = 0.$ Moreover, thanks to \eqref{id:phi}, 
$\hat{H}_1$ can be re-written as
\begin{align*}
   &\hat{H}_1 = -2\,\mathbb{E}\Big[\int_{\Pi}\int_{D}\int_{u_\eps^+(t,x)}^{u_\theta^+(t,y)}\text{sgn}^+(\mu -u_\eps(t,x))\Phi'(\mu)\,d\mu\,\text{div}_y\big(\nabla_x(\psi(t,x)\rho_m(x-y))\big)\,dy\,dx\,dt\Big]\notag  \\
   & =-2\,\mathbb{E}\Big[\int_{\Pi}\int_D\text{sgn}^+( u_\theta^+(t,y) - u_\eps(t,x))\big(\Phi(u_\theta^+(t,y))-\Phi(u_\eps^+(t,x))\big)\notag \\
   & \hspace{4cm}\times \big(\psi(t,x)\Delta_y\rho_m(x-y) - \nabla \psi(t,x)\nabla_y\rho_m(x-y)\big) \times \,dy\,dx\,dt\Big] = \hat{\mathcal{G}}_1\,.
\end{align*}
Thus
\begin{align}\label{eq:liminf-hat-G1}
    \underset{\xi \rightarrow 0}{\liminf}\, \big(\mathcal{G}_{0,1} + \mathcal{H}_{0,1} \big) - \Hat{\mathcal{G}}_1 \ge 0. 
\end{align}
\noindent Now, we focus on the terms $\mathcal{G}_{11}$ and $\mathcal{G}_{12}$. Since $\beta' , \beta'' \ge 0$, $\mathcal{G}_{12} \le 0$. Similar to the previous analysis, one has
\begin{align}
    &\underset{\xi,\Tilde{\xi} \rightarrow 0}{\lim}\,\underset{l \rightarrow 0}{\lim}\,\underset{\kappa \rightarrow 0}{\lim}\,\underset{n \rightarrow \infty}{\lim}\mathcal{G}_{11} \notag =  -\theta\mathbb{E}\Big[\int_{\Pi}\int_{D}\text{sgn}^+\big(u_\theta^+(t,y)-u_\eps(t,x)\big)\text{sgn}^+(u_\theta(t,y))\nabla u_\theta(t,y)\notag \\
    & \hspace{1cm} \times \psi(t,x)\nabla_y\rho_m(x-y)\,dy\,dx\,dt\Big] =: \tilde{\mathcal{G}}_{11}\,.\notag
\end{align}
By using Cauchy-Schwartz inequality and \eqref{inq:apriori-bound}, we estimate $\tilde{\mathcal{G}}_{11}$ as
\begin{align}
    \tilde{\mathcal{G}}_{11} &\le \theta C(\psi)\mathbb{E}\Big[\int_{\Pi}\int_{D}|\nabla u_\theta(t,y)||\nabla_y\rho_m(x-y)|\,dy\,dx\,dt\Big]\notag \\  &\le \theta^{\frac{1}{2}}C\Big(\theta\mathbb{E}\Big[\int_{\Pi}|\nabla u_\theta(t,y)|^2\,dy\,dt\Big]\Big)^\frac{1}{2}\Big(\mathbb{E}\Big[\int_{\Pi}\int_{D}|\nabla_y\rho_m(x-y)|^2\,dy\,dx\,dt\Big]\Big)^\frac{1}{2} \le C(\psi, m)\theta^{\frac{1}{2}}.\notag
\end{align}
%Thus, $\underset{\theta \rightarrow 0}{\lim} \Tilde{\mathcal{G}}_{11} \le 0.$ 
Similarly, we deduce that
$$\mathcal{H}_{10} \le 0,\quad \underset{\xi,\Tilde{\xi} \rightarrow 0}{\lim}\,\underset{l \rightarrow 0}{\lim}\,\underset{\kappa \rightarrow 0}{\lim}\,\underset{n \rightarrow \infty}{\lim}\mathcal{H}_9 \le C\eps^{\frac{1}{2}}.$$ To summarize, we have the following.
\begin{lem}\label{lem:G-11-H-10} We have,
    \begin{align}
        \underset{\xi \rightarrow 0}{\lim}\,\underset{\Tilde{\xi} \rightarrow 0}{\lim}\,\underset{l \rightarrow 0}{\lim}\,\underset{\kappa \rightarrow 0}{\lim}\,\underset{n \rightarrow \infty}{\lim}\big(\mathcal{G}_{11} + \mathcal{G}_{12} + \mathcal{H}_{9} + \mathcal{H}_{10}\big) \le C(\theta^{\frac{1}{2}} +\eps^{\frac{1}{2}}). \notag
    \end{align}
\end{lem}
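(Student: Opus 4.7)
The plan is to split the four terms according to their sign and, for the two that do not have a favourable sign, to pay them off against the small viscosities $\theta$ and $\eps$ using the \emph{a-priori} estimate \eqref{inq:apriori-bound}.

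First, I would observe that $\mathcal{G}_{12}$ and $\mathcal{H}_{10}$ both carry an overall minus sign multiplying non-negative integrands. Indeed, in $\mathcal{G}_{12}$ the bracket $\big(\beta_\xi'\,\beta_{\tilde\xi}''+\beta_\xi''\,|\beta_{\tilde\xi}'|^2\big)|\nabla u_\theta^\kappa|^2\,\psi\,\rho_n\,\rho_m\,J_l$ is pointwise $\ge 0$ by convexity of $\beta_\xi,\beta_{\tilde\xi}$, positivity of $\beta_\xi',\beta_{\tilde\xi}'$, and positivity of the test function and mollifiers; analogously for $\mathcal{H}_{10}$ with $\beta_\xi''(k-u_\eps)|\nabla u_\eps|^2$. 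Therefore $\mathcal{G}_{12}\le 0$ and $\mathcal{H}_{10}\le 0$ for every choice of small parameters, and these contributions may simply be discarded when seeking an upper bound.

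For $\mathcal{G}_{11}$ I would pass to the limits in the order $n\to\infty$, $\kappa\to 0$, $l\to\infty$, $\tilde\xi\to 0$, $\xi\to 0$, exactly as done for the flux pair $\mathcal{G}_3+\mathcal{G}_4$ earlier. Dominated convergence, the uniform bound $|\beta_\xi'|,|\beta_{\tilde\xi}'|\le 1$ from \eqref{inq:for-beta-xi}, and the properties of the boundary-adapted mollifier \eqref{eq:mollification} collapse the chained $\beta'$-factors into $\text{sgn}^+\!\big(u_\theta^+(t,y)-u_\eps(t,x)\big)\,\text{sgn}^+(u_\theta(t,y))$ and leave
\begin{align*}
\tilde{\mathcal{G}}_{11}=-\theta\,\mathbb{E}\Big[\int_\Pi\!\!\int_D \text{sgn}^+\!\big(u_\theta^+(t,y)-u_\eps(t,x)\big)\,\text{sgn}^+(u_\theta(t,y))\,\nabla u_\theta(t,y)\cdot\psi(t,x)\,\nabla_y\rho_m(x-y)\,dy\,dx\,dt\Big].
\end{align*}
Writing $\theta=\theta^{1/2}\cdot\theta^{1/2}$ and applying the Cauchy--Schwarz inequality in $(\omega,t,x,y)$ to separate $|\nabla u_\theta|$ from the fixed test-function/mollifier factor, the viscous bound \eqref{inq:apriori-bound} yields
\begin{align*}
|\tilde{\mathcal{G}}_{11}|\le C(\psi)\,\theta^{1/2}\,\Big(\theta\,\mathbb{E}\!\int_\Pi|\nabla u_\theta|^2\,dy\,dt\Big)^{1/2}\Big(\mathbb{E}\!\int_\Pi\!\!\int_D|\nabla_y\rho_m(x-y)|^2\,dy\,dx\,dt\Big)^{1/2}\le C(\psi,m)\,\theta^{1/2},
\end{align*}
since $m$ is fixed at this stage. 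A strictly parallel computation for $\mathcal{H}_9$, with $\nabla_x(\psi\rho_m)$ replacing $\psi\nabla_y\rho_m$ and the $L^2$-estimate $\eps\,\mathbb{E}\!\int_\Pi|\nabla u_\eps|^2\le C$ replacing the $\theta$-estimate, gives $\mathcal{H}_9\le C(\psi,m)\,\eps^{1/2}$. Summing the four contributions proves the claim.

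I do not expect any serious obstacle in this step: the delicate analysis sits in the flux, degenerate and Lévy terms handled in the preceding lemmas, and the present bound simply records that the artificial-viscosity remainders in the doubling-variables computation are $O(\sqrt{\theta}+\sqrt{\eps})$ and thus vanish when eventually sending $\theta,\eps\to 0$. The only point that requires mild care is the order of operations in $\mathcal{G}_{11}$: the Cauchy--Schwarz split should be performed before letting $\tilde\xi\to 0$, so that the uniform bound on $\beta_{\tilde\xi}'$ is absorbed into $C(\psi)$ without needing strong convergence of $\nabla u_\theta^\kappa$ against the singular kernel $\nabla_y\rho_m$.
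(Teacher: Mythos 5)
Your proposal is correct and follows essentially the same route as the paper: discard $\mathcal{G}_{12}$ and $\mathcal{H}_{10}$ by the sign of their integrands (convexity of $\beta_\xi,\beta_{\tilde\xi}$ and non-negativity of $\psi,\rho_n,\rho_m,J_l$), pass to the limits in $\mathcal{G}_{11}$ and $\mathcal{H}_{9}$ to reduce them to $\tilde{\mathcal{G}}_{11}$ and its analogue, and then pay for the remaining factors of $\theta$ and $\eps$ via Cauchy--Schwarz together with the a-priori bound \eqref{inq:apriori-bound}, yielding $C(\psi,m)(\theta^{1/2}+\eps^{1/2})$. The identification of the limit of the chained $\beta'$-factors as $\mathrm{sgn}^+$ terms and the final estimate match the paper's argument verbatim.
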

Next we consider the It\^o correction terms $\mathcal{G}_7$ and $\mathcal{H}_5$.
\begin{lem}\label{lem:ito-correction} It holds that,
\begin{align}
    &\underset{l \rightarrow 0}{\lim}\,\underset{\kappa \rightarrow 0}{\lim}\,\underset{n \rightarrow \infty}{\lim} \mathcal{G}_7 = \frac{1}{2}\,\mathbb{E}\Big[\int_{\Pi}\int_{D}\Big(\beta_\xi^{\prime\prime}\big(\beta_{\Tilde{\xi}}(u_\theta(t,y))- u_\eps(t,x)\big)|\beta_{\Tilde{\xi}}^\prime(u_\theta(t,y))|^2 \notag \\ & + \beta_\xi^{\prime}\big(\beta_{\Tilde{\xi}}(u_\theta(t,y))- u_\eps(t,x)\big)\beta_{\Tilde{\xi}}^{\prime\prime}(u_\theta(t,y))\Big)|\phi(u_\theta(t,y))|^2\psi(t,x)\rho_m(x-y)\,dy\,dx\,dt\Big],\notag \\
    &\underset{l \rightarrow 0}{\lim}\,\underset{\kappa \rightarrow 0}{\lim}\,\underset{n \rightarrow \infty}{\lim} \mathcal{H}_5 = \frac{1}{2}\mathbb{E}\Big[\int_{\Pi}\int_{D}\beta_\xi^{\prime\prime}\big(\beta_{\Tilde{\xi}}(u_\theta(t,y)-u_\eps(t,x)\big)|\phi( u_\eps(t,x))|^2\psi(t,x)\rho_m(x-y)\,dx\,dy\,dt\Big].\notag
\end{align}
\end{lem}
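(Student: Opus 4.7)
The plan is to pass to the limits in the stated order $l\to\infty$, $\kappa\to 0$, $n\to\infty$ in both $\mathcal{G}_7$ and $\mathcal{H}_5$. A uniform domination is available throughout: the bounded derivatives $\beta_\xi^{\prime\prime},\beta_\xi',\beta_{\tilde\xi}^{\prime\prime},\beta_{\tilde\xi}'$ (with $\beta_\xi^{\prime\prime}$ and $\beta_{\tilde\xi}^{\prime\prime}$ of compact support), the fixed test factors $\psi$ and $\rho_m$, and the a priori bound \eqref{inq:apriori-bound} combined with the Lipschitzness of $\phi$ and $\phi(0)=0$ produce together an $L^1(\Omega\times\Pi^2)$ majorant which is uniform in $l,\kappa,n$. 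Hence Lebesgue's dominated convergence theorem is applicable at each step; and because these terms came from the quadratic-variation piece of the It\^o--L\'evy formula (pathwise Lebesgue integrals, not stochastic integrals), the limits commute freely with $\mathbb{E}$.

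For $l\to\infty$ in $\mathcal{G}_7$, denote by $h(k)$ the bracketed kernel inside the $k$-integral; it is continuous and bounded in $k$, so the inner integral is precisely the convolution $(h\ast J_l)(u_\eps(t,x))$ and converges pointwise to $h(u_\eps(t,x))$. The same treatment of $J_l(k-\beta_{\tilde\xi}(u_\theta^\kappa(s,y)))$ in $\mathcal{H}_5$ collapses $k$ onto $\beta_{\tilde\xi}(u_\theta^\kappa(s,y))$, producing in both cases the common factor $\beta_\xi^{\prime\prime}(\beta_{\tilde\xi}(u_\theta^\kappa(s,y))-u_\eps(t,x))$ (together with the additional term $\beta_\xi^{\prime}(\beta_{\tilde\xi}(u_\theta^\kappa(s,y))-u_\eps(t,x))\beta_{\tilde\xi}^{\prime\prime}(u_\theta^\kappa(s,y))$ in $\mathcal{G}_7$, which is precisely the second summand on the right-hand side of the claim).

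For $\kappa\to 0$ I invoke the convergence properties of the modified mollification \eqref{eq:mollification}: $u_\theta^\kappa\to u_\theta$ a.e.\ in $D$ and in $L^p(D)$ for any $1\le p<\infty$, and $\phi(u_\theta)\ast\rho_\kappa\to\phi(u_\theta)$ in $L^2(D)$, for a.e.\ $(s,\omega)$. Continuity of the $\beta$-type factors in their arguments and the pointwise squared-domination $|\phi(u_\theta)\ast\rho_\kappa|^2\le(|\phi(u_\theta)|^2\ast|\rho_\kappa|)$ combined with \eqref{inq:apriori-bound} provide the $L^1$ envelope required by dominated convergence. Finally, for $n\to\infty$, the one-sided time mollifier $\rho_n(t-s)$ supported in $[-2/n,0]$ acts as an approximate identity in $s$; by the Lebesgue point theorem applied to the $L^2$-in-time maps $s\mapsto\phi(u_\theta(s,\cdot))$ and $s\mapsto u_\theta(s,\cdot)$, the $s$-integration collapses to evaluation at $s=t$ for almost every $t\in(0,T)$, and a last dominated convergence on $(\omega,t,x,y)$ delivers the two stated identities. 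The only mildly delicate step is this last one, since it requires Lebesgue-point convergence in $s$ almost everywhere in $t$; this is standard since Lebesgue points form a full-measure subset of $(0,T)$ and the integrand is uniformly bounded in the remaining variables by the majorant above.
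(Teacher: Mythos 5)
Your proof is correct and coincides with the paper's own treatment: the paper offers no separate argument for Lemma \ref{lem:ito-correction}, justifying these passages by precisely the conventional tools you deploy (dominated convergence with the a priori bound \eqref{inq:apriori-bound} and the Lipschitz property of $\phi$ with $\phi(0)=0$, Jensen's inequality for the mollified noise coefficient, continuity of the $\beta$-factors, and Lebesgue points for the one-sided time mollifier $\rho_n$). The only slip is notational: the nested limits $\underset{l}{\lim}\,\underset{\kappa \rightarrow 0}{\lim}\,\underset{n \rightarrow \infty}{\lim}$ mean that $n\to\infty$ is taken first and the $J_l$-collapse last, whereas you collapse the $k$-integral first; since each of your three passages is justified with the other two parameters held fixed (your majorants for each step are allowed to depend on the frozen parameters), the steps rearrange verbatim into the stated order and the conclusion is unaffected.
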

Note that $\mathcal{G}_8 = 0$ and we can re-write $\mathcal{H}_6$ as follows.
\begin{align*}
    \mathcal{H}_6 = -\,&\mathbb{E}\Big[\int_{\Pi^2}\int_{\R}\beta_\xi^{\prime}(k-u_\eps(t,x))\phi( u_\eps(t,x))\psi(t,x)\rho_m(x-y)\rho_n(t-s)\notag \\ & \times \Big(J_l\big(k -\beta_{\Tilde{\xi}}(u_\theta^\kappa(s,y))\big) - J_l\big(k- \beta_{\Tilde{\xi}}(u_\theta^\kappa(s-\frac{2}{n},y))\big)\Big)\,dk\,dx\,dy\,ds\,dW(t)\Big]\,.\notag
\end{align*}
 A simple application of the It\^o-L\'evy formula on $J_l\big(k -\beta_{\Tilde{\xi}}(u_\theta^\kappa(s,y))\big)$
 % $J_l\big(k -\beta_{\Tilde{\xi}}(u_\theta^\kappa(s,y))\big) - J_l\big(k- \beta_{\Tilde{\xi}}(u_\theta^\kappa(s-\frac{2}{n},y))\big)$
 along with It\^o-L\'evy product rule and integration by parts, one can reformulate the term $\mathcal{H}_6$ as
\begin{align}
    \mathcal{H}_6  & = -\mathbb{E}\Big[ \int_{\Pi} \int_{\mathbb{R}} \mathcal{K}[\beta'', \rho_{m,n}](s,y,k)\int_{s-\frac{2}{n}}^{s}J_l(k-\beta_{\Tilde{\xi}}(u_\theta^\kappa(r,y)))\beta_{\Tilde{\xi}}^\prime(u_\theta^\kappa(r,y))A_\theta(r,y)\,dr\,dk\,ds\,dy \Big]\notag\\
        %\,&- \mathbb{E}\Big[ \int_{\Pi} \int_{\mathbb{R}} \mathcal{K}[\beta'', \rho_{m,n}](s,y,k)\int_{s-\frac{2}{n}}^{s}J_l(k-\beta_{\Tilde{\xi}}(u_\theta^\kappa(r,y)))\beta_{\Tilde{\xi}}^\prime(u_\theta^\kappa(r,y))\text{div}_y (F(u_\theta(r,y)) \ast \rho_\kappa  \,dr\,dk\,ds\,dy \Big]\notag\\
        %&-\,\mathbb{E}\Big[ \int_{\Pi} \int_{\mathbb{R}} \mathcal{K}[\beta'', \rho_{m,n}](s,y,k)\int_{s-\frac{2}{n}}^{s}J_l(k-\beta_{\Tilde{\xi}}(u_\theta^\kappa(r,y)))\beta_{\Tilde{\xi}}^\prime(u_\theta^\kappa(r,y))\Delta \Phi(u_\theta(r,y))\ast \rho_\kappa\,dr\,dk\,ds\,dy \Big]\notag \\
        &-\,\mathbb{E}\Big[ \int_{\Pi} \int_{\mathbb{R}}\int_D\int_{s-\frac{2}{n}}^s\beta_\xi^{\prime\prime}(k-u_\theta^\kappa(r,y))(\phi( u_\theta(r,y))\ast \rho_\kappa)\phi( u_\eps(t, x))\psi(t,x)\rho_m(x-y)\rho_n(t-r)\notag\\ &\hspace{6cm}\times J_l\big(k -\beta_{\Tilde{\xi}}(u_\theta^\kappa(r,y))\big)\beta_{\Tilde{\xi}}^\prime(u_\theta^\kappa(r,y)\big)\,dr\,dy\,dk\,dt\,dx\Big]\notag \\
        &- \frac{1}{2}\mathbb{E}\Big[\int_{\Pi}\int_{\R}\int_{D}\mathcal{K}[\beta^{\prime\prime\prime}, \rho_{m,n}](s,y,k) \int_{s-\frac{2}{n}}^s J_l\big(k -\beta_{\Tilde{\xi}}(u_\theta^\kappa(r,y))\big)|\beta_{\Tilde{\xi}}^\prime(u_\theta^\kappa(r,y)\big)|^2 \notag \\ &  \hspace{8cm}\times|\phi( u_\theta(r, y))\ast\rho_\kappa|^2\,dr\,dy\,dk\,ds\,dx\Big]\notag \\
        &- \frac{1}{2}\mathbb{E}\Big[\int_{\Pi}\int_{\R}\int_{D}\mathcal{K}[\beta^{\prime\prime}, \rho_{m,n}](s,y,k)\int_{s-\frac{2}{n}}^s J_l\big(k -\beta_{\Tilde{\xi}}(u_\theta^\kappa(r,y))\big)\beta_{\Tilde{\xi}}^{\prime\prime}(u_\theta^\kappa(r,y))\notag \\ & \hspace{8cm}\times|\phi( u_\theta(r, y))\ast\rho_\kappa|^2\,dr\,dy\,dk\,ds\,dx\Big]\notag \\
        &+ \mathbb{E}\Big[\int_{\Pi}\int_{\R}\int_{D}\mathcal{K}[\beta^{\prime\prime\prime}, \rho_{m,n}](s,y,k)\int_{s-\frac{2}{n}}^{s}\int_{E}\int_0^1 (1-\lambda)J_l\big(k -\beta_{\Tilde{\xi}}(u_\theta^\kappa(r,y)) - \lambda\nu( u_\theta(r,y);z)\ast \rho_\kappa\big)\notag \\ & \hspace{5cm}\times|\beta_{\Tilde{\xi}}^\prime(u_\theta^\kappa(r,y))|^2\big|\nu( u_\theta(r,y);z) \ast \rho_\kappa\big|^2\,d\lambda\,m(dz)\,dr\,\,dy\,dk\,ds\,dx \Big]\notag \\&+\mathbb{E}\Big[\int_{\Pi}\int_{\R}\int_{D}\mathcal{K}[\beta^{\prime\prime}, \rho_{m,n}](s,y,k)\int_{s-\frac{2}{n}}^{s}\int_{E}\int_0^1 (1-\lambda) J_l\big(k -\beta_{\Tilde{\xi}}(u_\theta^\kappa(r,y))  - \lambda\nu( u_\theta(r,y);z) \ast \rho_\kappa\big)\notag \\ & \hspace{3cm}\times\beta_{\Tilde{\xi}}^{\prime\prime}(u_\theta^\kappa(r,y))\big|\nu( u_\theta(r,y);z) \ast \rho_\kappa\big|^2\,d\lambda\,m(dz)\,dr\,\,dy\,dk\,ds\,dx \Big]\notag
        =: \sum_{i=1}^6\mathcal{H}_{6,i},
\end{align}
where 
\begin{align*}
& A_\theta(r,y) = \theta \Delta u_\theta^\kappa(r,y) + \text{div}(F(u_\theta(r,y)))\ast \rho_\kappa + \Delta \Phi(u_\theta(r,y)) \ast \rho_\kappa, \\
   & \mathcal{K}[\beta, \rho_{m,n}](s,y,k) := \int_{\Pi}\phi( u_\eps(t,x))\beta_\xi(k- u_\eps(t,x))\rho_{m,n}(t,x,s,y)\,dx\,dW(t),
\end{align*}
with $\rho_{m,n}(t,x,s,y) := \psi(t,x)\rho_m(x-y)\rho_n(t-s)$. Note that since $u_\theta^\kappa\in H^2(D)$, $A_\theta(\cdot,\cdot)$ is square integrable. 
Regarding $\mathcal{K}[\beta,\rho_{m,n}](s,y,k)$, we have the following results whose proof can be found in \cite[Lemma 5.3]{Majee-2014}.\\
It holds that,
\begin{align}\label{lem:for-ito-term}
    &\partial_k\mathcal{K}[\beta, \rho_{m,n}](t,x,k) = \mathcal{K}[\beta', \rho_{m,n}](s,y,k), \quad \partial_y\mathcal{K}[\beta,  \rho_{m,n}](s,y,k) =
    \mathcal{K}[\beta, \partial_y\rho_{m,n}](s,y,k),\notag\\ 
    \text{and}\,,\quad
    &\underset{0 \leq s\leq T }{sup}\,\mathbb{E}\Big[||\mathcal{K}[\beta'',  \rho_{m,n}](s,\cdot,\cdot)||_{L^\infty(D \times \mathbb{R})}^2\Big] \leq \frac{C(\psi)n^{\frac{2(p-1)}{p}}m^{\frac{2}{p}}}{\xi^a},\notag\\
    &\underset{0 \leq s \leq T }{sup}\,\mathbb{E}\Big[||\mathcal{K}[\beta''',  \rho_{m,n}](s,\cdot,\cdot)||_{L^\infty(D\times \mathbb{R})}^2\Big] \leq \frac{C(\psi)n^{\frac{2(p-1)}{p}}m^{\frac{2}{p}}}{\xi^b},
\end{align}
where $p$ is a positive integer of the form $p=2^k$ for some $k \in \mathbb{N}$ with $p \geq d + 3 $ and for some $a, b \ge 0$ which only depends on $d$ and hence on $p$.

By using \eqref{lem:for-ito-term}, Young's inequality for convolution and replacing $u_\theta$ by $u_\theta^\kappa$ to adopt the same line of argument as in \cite{Majee-2015, Majee-2019}, one easily get $\mathcal{H}_{6,1}  \rightarrow 0$ as $n \rightarrow \infty.$
\iffalse
Moreover, as $u_0^\theta \in L^4(D)$ we have $u_\theta \in L^4(\Omega \times D)$ for each $t \in (0, T]$. Thus, using the fact that $\beta_\xi^\prime \le 1$ along with Lemma \ref{lem:for-ito-term}, assumption \ref{A4}, Cauchy-Schwarz inequality, we get
\begin{align}
    \mathcal{H}_{6,5} &\le C\,\mathbb{E}\Big[\int_{\Pi}\int_{\R}\int_{D}\int_{s-\frac{2}{n}}^s\|\mathcal{K}[\beta^{\prime\prime\prime}, \rho_{m,n}](s, \cdot, \cdot)\|_{\infty}J_l\big(k -\beta_{\Tilde{\xi}}(u_\theta^\kappa(r,y))\big)|\phi( u_\theta(r, y))\ast \rho_\kappa|^2\,dr\,dy\,dk\,ds\,dx\Big]\notag \\
    &\le C\,\mathbb{E}\Big[\int_0^t\int_{s-\frac{2}{n}}^s\|\mathcal{K}[\beta^{\prime\prime\prime}, \rho_{m,n}](s, \cdot, \cdot)\|_{\infty}\|\phi(u_\theta(r, \cdot))\ast \rho_\kappa\|_{L^2(D)}^2\,dr\,ds\Big]\notag\\
    & \le C\,\Big(\underset{0 \le s \le T}{\sup}\mathbb{E}\big[\|\beta^{\prime\prime\prime}, \rho_{m,n}](s, \cdot, \cdot)\|_\infty^2\big]\Big)^\frac{1}{2}\int_0^T\int_{s-\frac{2}{n}}^s\Big(\mathbb{E}\big[\|\phi( u_\theta(r, \cdot))\|_{L^2(D)}^4\big]\Big)^\frac{1}{2}\,dr\,ds \notag \\
    & \le C(\xi,m)\frac{n^\frac{p-1}{p}}{n}\underset{0 \le r \le T}{\sup}\Big(\mathbb{E}\big[\|u_\theta(r, \cdot)\|_{L^4(D)}^2\big]\Big)^\frac{1}{2} \le \frac{C}{n^\frac{2}{p}} \rightarrow 0 \quad\text{as} \quad n \rightarrow \infty.\notag
\end{align}
\fi
Moreover, as $u_0^\theta \in L^4(D)$, we have $u_\theta \in L^4(\Omega \times D)$ for each $t \in (0, T]$. Thus, using the assumption \ref{A5}, Cauchy-Schwarz inequality along with \eqref{lem:for-ito-term}, we have
\begin{align}
    \mathcal{H}_{6,5} &\le C\,\mathbb{E}\Big[\int_{\Pi}\int_{\R}\int_{D}\int_{s-\frac{2}{n}}^{s}\|\mathcal{K}[\beta^{\prime\prime\prime}, \rho_{m,n}](s,\cdot,\cdot)\|_{\infty}\int_{E}\int_0^1 J_l\big(k -\beta_{\Tilde{\xi}}(u_\theta^\kappa(r,y)) - \lambda\nu( u_\theta(r,y);z)\ast \rho_\kappa\big)\notag \\ & \hspace{5cm}\times\big|\nu( u_\theta(r,y);z) \ast \rho_\kappa\big|^2\,d\lambda\,m(dz)\,dr\,\,dy\,dk\,ds\,dx \Big]\notag \\
    & \le C\,\mathbb{E}\Big[\int_0^T\int_{s-\frac{2}{n}}^{s}\|\mathcal{K}[\beta^{\prime\prime\prime}, \rho_{m,n}](s,\cdot,\cdot)\|_{\infty}\int_{E}\|\nu( u_\theta(r,\cdot);z) \ast \rho_\kappa\|_{L^2(D)}^2\,m(dz)\,dr\,ds \Big] \notag \\
    & \le C\,\mathbb{E}\Big[\int_0^T\int_{s-\frac{2}{n}}^{s}\|\mathcal{K}[\beta^{\prime\prime\prime}, \rho_{m,n}](s,\cdot,\cdot)\|_{\infty}\Big(\int_{E}|g(z)|^2\,m(dz)\Big)\| u_\theta(r,\cdot)\|_{L^2(D)}^2\,dr\,ds \Big]\notag \\
    & \le C\,\Big(\underset{0 \le s \le T}{\sup}\mathbb{E}\big[\|\beta^{\prime\prime\prime}, \rho_{m,n}](s, \cdot, \cdot)\|_\infty^2\big]\Big)^\frac{1}{2}\int_0^T\int_{s-\frac{2}{n}}^s\Big(\mathbb{E}\big[\| u_\theta(r, \cdot)\|_{L^2(D)}^4\big]\Big)^\frac{1}{2}\,dr\,ds \notag \\
    & \le C(\xi,m)\frac{n^\frac{p-1}{p}}{n}\underset{0 \le r \le T}{\sup}\Big(\mathbb{E}\big[\|u_\theta(r, \cdot)\|_{L^4(D)}^2\big]\Big)^\frac{1}{2} \le \frac{C}{n^\frac{1}{p}} \rightarrow 0 \quad\text{as} \quad n \rightarrow \infty\,. \notag
\end{align}
Similarly, one can show that $\mathcal{H}_{6,3}, \mathcal{H}_{6,4}$, $\mathcal{H}_{6,6}$ $\rightarrow 0$ as $n \rightarrow \infty.$
Again, it is routine to pass to the limit in $\mathcal{H}_{6,2}$ and arrive at the following conclusion.
\begin{align*}
    &\underset{l \rightarrow \infty}{\lim}\,\underset{\kappa \rightarrow 0}{\lim}\,\underset{n \rightarrow \infty}{\lim}\, \mathcal{H}_{6,2}\notag \\  &= -\,\mathbb{E}\Big[ \int_{\Pi} \int_D\beta_\xi^{\prime\prime}(\beta_{\Tilde{\xi}}(u_\theta(t,y))-u_\eps(t,x))\beta_{\Tilde{\xi}}^\prime(u_\theta(t,y))\phi( u_\eps(t,x))\phi( u_\theta(t, y))\psi(t,x)\rho_m(x-y) \,dt\,dx\,dy\Big]\,.
\end{align*}
Combining the above estimations with Lemma \ref{lem:ito-correction}, and using \eqref{inq:for-beta-xi} along with the assumptions \ref{A3}-\ref{A4}, we get 
\begin{align}
    &\underset{l \rightarrow \infty}{\lim}\,\underset{\kappa \rightarrow 0}{\lim}\,\underset{n \rightarrow \infty}{\lim}\,\big(\mathcal{G}_7 + \mathcal{G}_8 + \mathcal{H}_5 + \mathcal{H}_6\big)\notag \\ & \le \frac{1}{2}\,\mathbb{E}\Big[\int_{\Pi}\int_{D}\beta_\xi^{\prime\prime}\big(\beta_{\Tilde{\xi}}(u_\theta(t,y))- u_\eps(t,x)\big)|\beta_{\Tilde{\xi}}^\prime(u_\theta(t,y))|^2|\phi(u_\theta(t,y))|^2\psi(t,x)\rho_m(x-y)\,dy\,dx\,dt\Big]\notag \\
    & +\frac{1}{2}\mathbb{E}\Big[\int_{\Pi}\int_{D}\beta_\xi^{\prime\prime}\big(\beta_{\Tilde{\xi}}(u_\theta(t,y)-u_\eps(t,x)\big)|\phi( u_\eps(t,x))|^2\psi(t,x)\rho_m(x-y)\,dx\,dy\,dt\Big] \notag \\
    & - \mathbb{E}\Big[ \int_{\Pi} \int_D\beta_\xi^{\prime\prime}(\beta_{\Tilde{\xi}}(u_\theta(t,y))-u_\eps(t,x))\beta_{\Tilde{\xi}}^\prime(u_\theta(t,y))\phi( u_\eps(t,x))\phi( u_\theta(t, y))\psi(t,x)\rho_m(x-y) \,dt\,dx\,dy\Big] \notag \\
     & +\frac{1}{2}\,\mathbb{E}\Big[\int_{\Pi}\int_{D}\beta_\xi^{\prime}\big(\beta_{\Tilde{\xi}}(u_\theta(t,y))- u_\eps(t,x)\big)\beta_{\Tilde{\xi}}^{\prime\prime}(u_\theta(t,y))|\phi(u_\theta(t,y))|^2\psi(t,x)\rho_m(x-y)\,dy\,dx\,dt\Big]\notag \\
     & \le \frac{1}{2}\,\mathbb{E}\Big[\int_{\Pi}\int_{D}\beta_\xi^{\prime\prime}\big(\beta_{\Tilde{\xi}}(u_\theta(t,y))- u_\eps(t,x)\big)\big|\beta_{\Tilde{\xi}}^\prime(u_\theta(t,y))\phi(u_\theta(t,y)) - \phi(u_\eps(t,x))\big|^2\notag \\ &\hspace{5cm}\times\psi(t,x)\rho_m(x-y)\,dy\,dx\,dt\Big] + C(\psi)\Tilde{\xi}\notag \\
& \underset{\Tilde{\xi} \rightarrow 0}{\longrightarrow} \frac{1}{2}\,\mathbb{E}\Big[\int_{\Pi}\int_{D}\beta_\xi^{\prime\prime}\big(u_\theta^+(t,y))- u_\eps(t,x)\big)\big|\phi(u_\theta^+) - \phi(u_\eps)\big|^2\psi(t,x)\rho_m(x-y)\,dy\,dx\,dt\Big]\le 
     C\xi \,. \label{inq:g7g8h5h6}
\end{align}
Thus, we have
\begin{lem}\label{lem:G7-H6}
  \begin{align*}
    \underset{\xi \rightarrow 0}{\lim}\,\underset{\Tilde{\xi} \rightarrow 0}{\lim}\,\underset{l \rightarrow \infty}{\lim}\,\underset{\kappa \rightarrow 0}{\lim}\,\underset{n \rightarrow \infty}{\lim}\,\big(\mathcal{G}_7 + \mathcal{G}_8 + \mathcal{H}_5 + \mathcal{H}_6\big) \le 0.
\end{align*}  
\end{lem}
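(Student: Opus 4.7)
The plan is to combine $\mathcal{G}_7, \mathcal{G}_8, \mathcal{H}_5, \mathcal{H}_6$ in such a way that the leading contributions assemble into a perfect square of the form $\beta_\xi''(\cdot)\big|\beta_{\tilde\xi}'(u_\theta)\phi(u_\theta) - \phi(u_\eps)\big|^2$, which can then be dispatched using Lipschitzness of $\phi$ together with the support property $\beta_\xi''(r) \le C\xi^{-1}\mathbf 1_{\{|r|\le \xi\}}$ from \eqref{inq:for-beta-xi}. The easy observations come first: $\mathcal{G}_8 = 0$ because the stochastic integrand against $dW(s)$ is predictable once we integrate against the (deterministic in $s$) cut-off $J_l(u_\eps(t,x) - k)$, and Lemma \ref{lem:ito-correction} already identifies the limits of $\mathcal{G}_7$ and $\mathcal{H}_5$ as $n\to\infty$, $\kappa\to 0$, $l\to\infty$, producing the two diagonal It\^o-correction terms $\tfrac{1}{2}\beta_\xi''|\beta_{\tilde\xi}'(u_\theta)\phi(u_\theta)|^2$ (modulo the harmless $\beta_\xi'\beta_{\tilde\xi}''|\phi(u_\theta)|^2$ piece) and $\tfrac{1}{2}\beta_\xi''|\phi(u_\eps)|^2$.

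The real work is on $\mathcal{H}_6$, which cannot be handled directly because the time-doubling destroys adaptedness: $J_l(k-\beta_{\tilde\xi}(u_\theta^\kappa(s,y)))$ is not measurable at time $t$. The plan is to apply the It\^o-L\'evy formula to $J_l(k-\beta_{\tilde\xi}(u_\theta^\kappa(s,y)))$ between the times $s-\tfrac{2}{n}$ and $s$ (using that $\mathrm{supp}(\rho_n)\subset[-\tfrac{2}{n},0]$), then use integration by parts in $k$ and the auxiliary martingale $\mathcal{K}[\beta,\rho_{m,n}]$ to rewrite $\mathcal{H}_6 = \sum_{i=1}^6 \mathcal{H}_{6,i}$. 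Using the $L^\infty$-in-$(y,k)$ estimates on $\mathcal{K}[\beta'',\rho_{m,n}]$ and $\mathcal{K}[\beta''',\rho_{m,n}]$ in \eqref{lem:for-ito-term}, Young's inequality for convolution to replace $\phi(u_\theta)\ast\rho_\kappa$ by $\phi(u_\theta)$, the bound $u_0^\theta\in L^4(D)\Rightarrow u_\theta\in L^4$, the a priori bound \eqref{inq:apriori-bound}, and the factor $\tfrac{2}{n}$ coming from integration over $[s-\tfrac{2}{n},s]$, all subterms except $\mathcal{H}_{6,2}$ are of order $n^{-1/p}$ and vanish as $n\to\infty$. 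The surviving contribution $\mathcal{H}_{6,2}$ converges to exactly the mixed term $-\mathbb{E}\big[\int \beta_\xi''\,\beta_{\tilde\xi}'(u_\theta)\phi(u_\theta)\phi(u_\eps)\psi\rho_m\big]$.

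At this stage, summing $\mathcal{G}_7 + \mathcal{H}_5 + \mathcal{H}_{6,2}$ yields $\tfrac{1}{2}\mathbb{E}\big[\int \beta_\xi''(\beta_{\tilde\xi}(u_\theta)-u_\eps)|\beta_{\tilde\xi}'(u_\theta)\phi(u_\theta)-\phi(u_\eps)|^2 \psi\rho_m\big]$, plus a remainder carrying $\beta_\xi'\beta_{\tilde\xi}''|\phi(u_\theta)|^2$. Using $\beta_\xi'\le 1$, the Lipschitzness of $\phi$ with $\phi(0)=0$, and the fact that $\beta_{\tilde\xi}''$ is supported on $\{|u_\theta|\le\tilde\xi\}$, the remainder is $O(\tilde\xi)$. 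Sending $\tilde\xi\to 0$ and exploiting $\beta_{\tilde\xi}'(u_\theta)\to \mathrm{sgn}^+(u_\theta)$ together with $\phi(0)=0$, the integrand becomes $\beta_\xi''(u_\theta^+-u_\eps)|\phi(u_\theta^+)-\phi(u_\eps)|^2$. Finally, on the support $\{|u_\theta^+-u_\eps|\le\xi\}$ we have $|\phi(u_\theta^+)-\phi(u_\eps)|^2\le C\xi^2$ by Lipschitzness, so the whole expression is bounded by $C\xi$ and tends to $0$ as $\xi\to 0$.

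The main obstacle is undoubtedly step two, where time-doubling forces the introduction of the auxiliary martingale $\mathcal{K}$ and a delicate accounting of how the $n^{(p-1)/p}$ blow-up of $\|\mathcal{K}[\beta''',\rho_{m,n}]\|_\infty$ is beaten by the $n^{-1}$ gain from integrating over $[s-\tfrac{2}{n},s]$, together with the need to use an $L^4$ bound on $u_\theta$ (hence the requirement $u_0^\theta\in L^4(D)$) to close the Cauchy--Schwarz estimate; once that machinery is in place, the perfect-square identification and the final Lipschitz collapse of $\beta_\xi''|\phi(u_\theta^+)-\phi(u_\eps)|^2$ are essentially routine.
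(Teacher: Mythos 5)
Your proposal is correct and follows essentially the same route as the paper: $\mathcal{G}_8=0$ by adaptedness, Lemma \ref{lem:ito-correction} for $\mathcal{G}_7$ and $\mathcal{H}_5$, the It\^o--L\'evy formula on $J_l(k-\beta_{\Tilde{\xi}}(u_\theta^\kappa))$ with the auxiliary stochastic integral $\mathcal{K}[\beta,\rho_{m,n}]$ and the bounds \eqref{lem:for-ito-term} to kill all pieces of $\mathcal{H}_6$ except the cross term $\mathcal{H}_{6,2}$, then the perfect-square assembly with an $O(\Tilde{\xi})$ remainder, and the final $C\xi$ collapse via Lipschitzness of $\phi$ and the support bound on $\beta_\xi''$. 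This matches the paper's argument step for step, including the use of $u_0^\theta\in L^4(D)$ and the $n^{-1}$ gain from integrating over $[s-\tfrac{2}{n},s]$ beating the $n^{(p-1)/p}$ growth of the $\mathcal{K}$-estimates.
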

Next, we focus on the additional terms $\mathcal{G}_9$ and $\mathcal{H}_7$ due to jump noise. Following similar lines of argument as done in \cite{Majee-2015, Majee-2019} along with the properties of convolution, we conclude the following result.
\begin{lem}\label{lem:correction-term-due-levy}
    \begin{align*}
        &\underset{\Tilde{\xi} \rightarrow 0}{\lim}\,\underset{l \rightarrow \infty}{\lim}\,\underset{\kappa \rightarrow 0}{\lim}\,\underset{n \rightarrow \infty}{\lim} \big(\mathcal{G}_9 + \mathcal{H}_7\big)\notag \\ = &\,\mathbb{E}\Big[\int_{\Pi}\int_{D}\int_{E}\Big(\beta_\xi\big(u_\theta^+(t,y) + \nu( u_\theta(t,y); z) - u_\eps(t,x)\big) - \beta_{\xi}(u_\theta^+(t,y)-u_\eps(t,x))\notag\\  & \hspace{2cm}- \nu( u_\theta^+(t,y); z)\beta_\xi^\prime\big(u_\theta^+(t,y) -u_\eps(t,x)\big)\Big)\psi(t,x)\rho_m(x-y)\,m(dz)\,dy\,dx\,dt\Big]\notag \\ 
        + & \mathbb{E}\Big[\int_{\Pi}\int_{D}\int_{E}\Big(\beta_\xi\big(u_\theta^+(t,y)-u_\eps(t,x) - \nu(u_\eps(t,x);z)\big) - \beta_\xi(u_\theta^+(t,y)-u_\eps(t,x))\notag \\ & \hspace{2cm} + \nu(u_\eps(t,x);z)\beta^\prime(u_\theta^+(t,y)- u_\eps(t,x))\Big)
        \psi(t,x)\rho_m(x-y)\,m(dz)\,dx\,dy\,dt\Big]\,.
    \end{align*}
\end{lem}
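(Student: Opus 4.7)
The plan is to pass the four limits in the stated order $n\to\infty$, $\kappa\to 0$, $l\to\infty$, $\tilde\xi\to 0$ inside $\mathcal{G}_9$ and $\mathcal{H}_7$ separately, using standard tools (Lebesgue differentiation, properties of mollifiers, dominated convergence, the \emph{a-priori} bound \eqref{inq:apriori-bound}) to justify each passage, in the same spirit as the corresponding computations in \cite{Majee-2015, Majee-2019}. The Taylor remainder structure of the $\beta_\xi$-differences (involving $\nu$) is uniformly bounded in $z$ by $C\lambda^\star|g(z)|^2$ via \ref{A6}, which furnishes the $m(dz)$-integrable domination needed at every step.

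First I would send $n\to\infty$. Because $\rho_n(t-s)$ is an approximate identity supported in $[-2/n,0]$ and all fields involved are predictable $L^2$-processes, the $s$-integration collapses onto $s=t$ in the almost-everywhere sense guaranteed by the Lebesgue point theorem, replacing $u_\theta^\kappa(s,y)$ and $\nu(u_\theta(s,y);z)\ast\rho_\kappa$ by their values at $s=t$. Next, sending $\kappa\to 0$, the mollification $u_\theta^\kappa\to u_\theta$ in $L^2(\Omega\times\Pi)$ and $\nu(u_\theta;z)\ast\rho_\kappa\to \nu(u_\theta;z)$ (pointwise for a.e.\ $z$, with an $L^2(E,m)$-domination coming from \ref{A6}) propagate through the Lipschitz maps $\beta_\xi$, $\beta_\xi'$, $\beta_{\tilde\xi}$, $\beta_{\tilde\xi}'$. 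The sending of $l\to\infty$ is a direct application of Lebesgue differentiation in the variable $k$: the mollifier $J_l(u_\eps(t,x)-k)$ in $\mathcal{G}_9$ collapses the $k$-integration onto $k=u_\eps(t,x)$, and symmetrically $J_l(k-\beta_{\tilde\xi}(u_\theta^\kappa(s,y)))$ in $\mathcal{H}_7$ forces $k=\beta_{\tilde\xi}(u_\theta(t,y))$.

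Finally, I would send $\tilde\xi\to 0$. Using $\beta_{\tilde\xi}(r)\to r^+$ uniformly and $\beta_{\tilde\xi}'(r)\to \mathrm{sgn}^+(r)$ pointwise with $|\beta_{\tilde\xi}'|\le 1$, dominated convergence yields
\[
\beta_\xi\!\bigl(\beta_{\tilde\xi}(u_\theta)+\nu(u_\theta;z)-k\bigr)\longrightarrow \beta_\xi\!\bigl(u_\theta^+(t,y)+\nu(u_\theta(t,y);z)-k\bigr)
\]
and $\beta_\xi'(\beta_{\tilde\xi}(u_\theta)-k)\beta_{\tilde\xi}'(u_\theta)\to \beta_\xi'(u_\theta^+-k)\,\mathrm{sgn}^+(u_\theta)$. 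The algebraic identity that rewrites this limit in the stated form uses monotonicity of $\nu$ together with $\nu(0;z)=0$ from \ref{A6}: for every $r\in\R$ one has $\nu(r;z)\,\mathrm{sgn}^+(r)=\nu(r^+;z)$, so the correction term $\nu(u_\theta;z)\beta_\xi'(u_\theta^+-u_\eps)\mathrm{sgn}^+(u_\theta)$ coincides with $\nu(u_\theta^+(t,y);z)\beta_\xi'(u_\theta^+-u_\eps)$, which is precisely the expression appearing in the statement of the lemma. No analogous identity is needed on the $\mathcal{H}_7$-side since the argument of $\nu$ there is already $u_\eps(t,x)$, independent of the $\tilde\xi$-limit.

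The main obstacle I anticipate is the $n\to\infty$ step on the $\mathcal{H}_7$-side, where the $s$-integration of $J_l(k-\beta_{\tilde\xi}(u_\theta^\kappa(s,y)))$ involves the law of the process $u_\theta^\kappa$ and one must argue that $s\mapsto \beta_{\tilde\xi}(u_\theta^\kappa(s,y))$ admits Lebesgue points in time, $\mathbb{P}$-a.s.\ and a.e.\ in $(y,k)$; this is exactly the type of argument carried out in \cite[Section 4]{Majee-2019} by exploiting the mild regularity that $u_\theta^\kappa$ inherits from the viscous equation. Once this continuity-in-time argument is in hand, the remaining passages are routine dominated convergence arguments based on the Lipschitzness of $\beta$, $\beta'$, $\nu(\cdot;z)$ and the $L^2(E,m)$-control $0\le g(z)\le 1$ from \ref{A6}.
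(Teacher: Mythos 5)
Your proposal is correct and follows essentially the same route as the paper, which itself gives no details beyond invoking the standard limit arguments of \cite{Majee-2015, Majee-2019} (one-sided Lebesgue points in $s$, collapse of the $k$-integration via $J_l$, mollification properties as $\kappa \rightarrow 0$, and dominated convergence furnished by \ref{A6}), exactly the passages you outline in the stated order. The one non-routine ingredient you correctly isolate is the identity $\nu(r;z)\,\mathrm{sgn}^+(r)=\nu(r^+;z)$, which converts the limit $\nu(u_\theta;z)\beta_\xi^\prime(u_\theta^+-u_\eps)\,\mathrm{sgn}^+(u_\theta)$ of the correction term into the $\nu(u_\theta^+;z)\beta_\xi^\prime(u_\theta^+-u_\eps)$ appearing in the statement while leaving $\nu(u_\theta;z)$ untouched inside the first $\beta_\xi$; note this identity requires only $\nu(0;z)=0$ from \ref{A6}, not the monotonicity of $\nu$ that you additionally invoke.
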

Note that $\mathcal{G}_{10} = 0$. Invoking similar argument as done for $\mathcal{H}_6$ and making use of similar estimations as in \cite[Lemma 5.4 \& 5.5]{Majee-2015},  we pass to the limit in $\mathcal{H}_{8}$ to arrive at 
\begin{align}
&\underset{\Tilde{\xi} \rightarrow 0}{\lim}\,\underset{l \rightarrow \infty}{\lim}\,\underset{\kappa \rightarrow 0}{\lim}\,\underset{n \rightarrow \infty}{\lim}\, \mathcal{H}_{8}\notag \\   
=&\, \mathbb{E}\Big[\int_{\Pi}\int_{D}\int_{E}\Big\{\beta_\xi\big(u_\theta^+(t,y) + \nu( u_\theta(t,y);z)-u_\eps(t,x) - \nu(u_\eps(t,x);z)\big)\notag \\
&\hspace{2cm} - \beta_\xi\big(u_\theta^+(t,y) + \nu( u_\theta(t,y);z)-u_\eps(t,x)\big) + \beta_{\xi}\big(u_\theta^+(t,y))- u_\eps(t,x)\big)\notag \\
&\hspace{2.5cm} -\beta_\xi\big(u_\theta^+(t,y))-u_\eps(t,x) - \nu(u_\eps(t,x);z)\big)\Big\}\psi(t,x)\rho_m(x-y)\,m(dz)\,dx\,dy\,dt\Big]\notag \\
= & \,\mathbb{E}\Big[\int_{\Pi}\int_{D}\int_{E}\Big\{\beta_\xi\big(u_\theta^+(t,y) + \nu( u_\theta^+(t,y);z)-u_\eps(t,x) - \nu(u_\eps(t,x);z)\big)\notag \\ 
&\hspace{2cm} - \beta_\xi\big(u_\theta^+(t,y) + \nu( u_\theta(t,y);z)-u_\eps(t,x)\big) + \beta_{\xi}\big(u_\theta^+(t,y))- u_\eps(t,x)\big)\notag \\
&\hspace{2.5cm} -\beta_\xi\big(u_\theta^+(t,y))-u_\eps(t,x) - \nu(u_\eps(t,x);z)\big)\Big\}\psi(t,x)\rho_m(x-y)\,m(dz)\,dx\,dy\,dt\Big]\notag \\
 &+ \mathbb{E}\Big[\int_{\Pi}\int_{D}\int_{E}\Big(\beta_\xi\big(u_\theta^+(t,y) + \nu( u_\theta(t,y);z)-u_\eps(t,x) - \nu(u_\eps(t,x);z)\big)\notag \\ 
 & \hspace{2cm}- \beta_\xi\big(u_\theta^+(t,y) + \nu( u_\theta^+(t,y);z)-u_\eps(t,x) - \nu(u_\eps(t,x);z)\big)\Big)\psi(t,x)\rho_m(x-y)\,m(dz)\,dx\,dy\,dt\Big]\notag \\
 &=: \sum_{i=1}^{2}\mathcal{H}_{8,i}\,. \notag
\end{align}
Since $\beta_\xi^\prime \ge 0 $, $\nu$ is non-decreasing and $u_\theta \le u_\theta^+$, one has
\begin{align}
    \mathcal{H}_{8,2} = \mathbb{E}\Big[\int_{\Pi}\int_{D}\int_{E}\beta_\xi^\prime(\hat{a})\big(\nu(u_\theta(t,y);z)-\nu(u_\theta^+(t,y);z)\big)\psi(t,x)\rho_m(x-y)\,m(dz)\,dx\,dy\,dt\Big] \le 0,\notag
\end{align}
for some $\hat{a}\in \Big( u_\theta^+(t,y) + \nu( u_\theta;z)-u_\eps(t,x) - \nu(u_\eps;z),u_\theta^+(t,y) + \nu( u_\theta^+;z)-u_\eps(t,x) - \nu(u_\eps;z)\Big)$.
Thus, we get
\begin{align}\label{eq:H-8-4}
    \underset{\Tilde{\xi} \rightarrow 0}{\lim}\,\underset{l \rightarrow \infty}{\lim}\,\underset{\kappa \rightarrow 0}{\lim}\,\underset{n \rightarrow \infty}{\lim}\, \mathcal{H}_{8} \le \mathcal{H}_{8,1}.
\end{align}
Lemma \ref{lem:correction-term-due-levy} and \eqref{eq:H-8-4} gives
\begin{align}\label{eq-g9g10h7h8}
    &\underset{\Tilde{\xi} \rightarrow 0}{\lim}\,\underset{l \rightarrow \infty}{\lim}\,\underset{\kappa \rightarrow 0}{\lim}\,\underset{n \rightarrow \infty}{\lim}\, \big(\mathcal{G}_9 + \mathcal{G}_{10} + \mathcal{H}_7 + \mathcal{H}_8\big)\notag \\
    %%%%%%%%%%%%%%%%%%%%%%%%%%%%
    \iffalse  
    %& =  \mathbb{E}\Big[\int_{\Pi}\int_{D}\int_{E}\Big(\beta_\xi\big(u_\theta^+(t,y) + \nu( u_\theta^+(t,y);z)-u_\eps(t,x) - \nu(u_\eps(t,x);z)\big)\notag \\ & - \beta_\xi\big(u_\theta^+(t,y) + \nu( u_\theta^+(t,y);z)-u_\eps(t,x)\big) + \beta_{\xi}\big(u_\theta^+(t,y)- u_\eps(t,x)\big)\notag \\ & -\beta_\xi\big(u_\theta^+(t,y)-u_\eps(t,x) - \nu(u_\eps(t,x);z)\big)\Big)\psi(t,x)\rho_m(x-y)\,m(dz)\,dx\,dy\,dt\Big]\notag \\
%&+\,\mathbb{E}\Big[\int_{\Pi}\int_{D}\int_{E}\Big(\beta_\xi\big(u_\theta^+(t,y) - u_\eps(t,x) + \nu( u_\theta^+(t,y); z)\big) - \beta_\xi\big(u_\theta^+(t,y) - u_\eps(t,x)\big)\notag \\ &- \beta_\xi^{\prime}\big(u_\theta^+(t,y) - u_\eps(t,x)\big)\nu(u_\theta^+(t,y);z)\Big)\psi(t,x)\rho_m(x-y)\,m(dz)\,dy\,dx\,dt\Big]\notag\\
%&+\,\mathbb{E}\Big[\int_{\Pi}\int_{D}\int_{E}\Big(\beta_\xi\big(u_\theta^+(t,y) - u_\eps(t,x) - \nu( u_\eps(t,x); z)\big) - \beta_\xi\big(u_\theta^+(t,y) - u_\eps(t,x)\big)\notag \\ &+ \beta_\xi^{\prime}\big(u_\theta^+(t,y) - u_\eps(t,x)\big)\nu(u_\eps(t,x);z)\Big)\psi(t,x)\rho_m(x-y)\,m(dz)\,dy\,dx\,dt\Big]\notag \\
  =   \mathbb{E}\Big[\int_{\Pi}\int_{D}\int_{E}\Big(\beta_\xi\big(u_\theta^+(t,y) + \nu( u_\theta^+(t,y);z)-u_\eps(t,x) - \nu(u_\eps(t,x);z)\big) - \beta_\xi\big(u_\theta^+(t,y) - u_\eps(t,x)\big)\notag \\ &  + \beta_{\xi}^{\prime}\big(u_\theta^+(t,y)- u_\eps(t,x)\big)\big(\nu(u_\theta^+(t,y);z) - \nu(u_\eps(t,x);z)\big)\Big)\psi(t,x)\rho_m(x-y)\,m(dz)\,dx\,dy\,dt\Big]\notag \\
  \fi
  %%%%%%%%%%%%%%%%%%%%%%%%%%%%%%%%%%%%%%%%%%%%%%%%%%%%%%%%%%%%%
 & = \mathbb{E}\Big[\int_{\Pi}\int_{D}\int_{E}(1-\lambda)p^2\beta_{\xi}^{\prime\prime}(q + \lambda p) \psi(t,x)\rho_m(x-y)\,m(dz)\,dx\,dy\,dt\Big]\,,
\end{align}
where $p = \nu(u_\theta^+(t,y);z) - \nu(u_\eps(t,x);z)$ and $q=u_\theta^+(t,y) - u_\eps(t,x)$. In view of the assumptions \ref{A6}, we see that
\begin{align*}
    &p^2\beta^{\prime\prime}_{\xi}(q + \lambda p) \le (\lambda^\star)^2q^2\beta^{\prime\prime}_{\xi}(q + \lambda p)g^2(z).
\end{align*}
For $q \ge 0$, since $\nu$ is non-decreasing, it holds that
\begin{align*}
    0 \le q \le (q + \lambda p) \quad \text{for any} \quad \lambda \in [0, 1]\,.
\end{align*}
Thus, using \eqref{inq:for-beta-xi}, one has for $q \ge 0$
\begin{align}\label{inq:q-bigger-0}
    p^2\beta^{\prime\prime}_{\xi}(q + \lambda p) \le (\lambda^\star)^2(q + \lambda p)^2\beta^{\prime\prime}_{\xi}(q + \lambda p)g^2(z) \le C\xi g^2(z).
\end{align}
Since $\nu$ is non-decreasing, $p \le 0$ when $q < 0.$ Thus, for $q < 0$, we get
\begin{align}\label{inq:q-smaller-0}
   \beta^{\prime\prime}_\xi(q + \lambda p) = 0 \,. 
\end{align} 
 Combining \eqref{inq:q-bigger-0} and \eqref{inq:q-smaller-0} in \eqref{eq-g9g10h7h8}, we infer
 \begin{align}
     \underset{\Tilde{\xi} \rightarrow 0}{\lim}\,\underset{l \rightarrow \infty}{\lim}\,\underset{\kappa \rightarrow 0}{\lim}\,\underset{n \rightarrow \infty}{\lim}\, \big(\mathcal{G}_9 + \mathcal{G}_{10} + \mathcal{H}_7 + \mathcal{H}_8\big) \le C\xi.\label{inq:g9g10h7h8}
 \end{align}
Thus, we conclude the following lemma.
\begin{lem}\label{lem:G9-H8}
    \begin{align*}
         &\underset{\xi \rightarrow 0}{\lim}\,\underset{\Tilde{\xi} \rightarrow 0}{\lim}\,\underset{l \rightarrow \infty}{\lim}\,\underset{\kappa \rightarrow 0}{\lim}\,\underset{n \rightarrow \infty}{\lim}\, \big(\mathcal{G}_9 + \mathcal{G}_{10} + \mathcal{H}_7 + \mathcal{H}_8\big) \le 0.
    \end{align*}
\end{lem}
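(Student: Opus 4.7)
My plan is to piece together the computations that have just been carried out in the analysis preceding the statement, whose outputs already give all the building blocks. First I would observe that $\mathcal{G}_{10}=0$: the integrand inside the $\widetilde N$-integral in $\mathcal{G}_{10}$ multiplies the predictable factor $J_l(u_\eps(t,x)-k)$, so taking expectation and using the martingale property of the compensated Poisson integral kills the term. Similarly, $\mathcal{G}_8$ is already known to vanish, which is why no Itô-correction was needed on $\mathcal{G}$'s side in the earlier Brownian estimate.

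Second, I would pass to the limit in $\mathcal{H}_8$ by mimicking the Itô--Lévy product-rule argument used for $\mathcal{H}_6$: apply the Itô--Lévy formula to $J_l(k-\beta_{\tilde\xi}(u_\theta^\kappa(s,y)))$ on $[s-2/n,s]$, multiply with the stochastic integrand, and use the auxiliary process $\mathcal{M}[\beta,\rho_{m,n}]$ (the Lévy analogue of $\mathcal{K}$) whose $L^\infty$ bounds (cf.\ the estimates quoted from \cite[Lem.~5.4, 5.5]{Majee-2015}) force the $A_\theta$, Itô-correction and double--Lévy terms to vanish as $n\to\infty$. The only surviving contribution comes from the ``double jump'' piece, producing the quantity $\mathcal{H}_{8,1}+\mathcal{H}_{8,2}$ displayed in the excerpt. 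Here one uses that $u_\theta\le u_\theta^+$, the monotonicity of $\nu$ in its first argument, and $\beta_\xi'\ge 0$ to conclude $\mathcal{H}_{8,2}\le 0$, leaving $\mathcal{H}_{8,1}$.

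Third, combining $\mathcal{H}_{8,1}$ with the limit of $\mathcal{G}_9+\mathcal{H}_7$ provided by Lemma \ref{lem:correction-term-due-levy}, a Taylor expansion of $\beta_\xi$ to second order around $q=u_\theta^+(t,y)-u_\eps(t,x)$ yields exactly the integral in \eqref{eq-g9g10h7h8}, namely
\begin{align*}
\mathbb{E}\Big[\int_{\Pi}\int_{D}\int_{E}(1-\lambda)\,p^2\,\beta_\xi''(q+\lambda p)\,\psi\,\rho_m(x-y)\,m(dz)\,dx\,dy\,dt\Big],
\end{align*}
with $p=\nu(u_\theta^+(t,y);z)-\nu(u_\eps(t,x);z)$.

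The main obstacle, and the reason for the new assumption that $\nu$ is non-decreasing, is the control of this last integral. Because $\beta$ is asymmetric ($\beta_\xi''\ge 0$ only near $0^+$) one cannot simply use $|p|\le \lambda^\star|q|g(z)$ as in the symmetric case. The monotonicity of $\nu$ supplies the sign information: when $q\ge 0$, $p\ge 0$ and in fact $0\le q\le q+\lambda p$, so using $(q+\lambda p)^2\beta_\xi''(q+\lambda p)\le C\xi$ (from \eqref{inq:for-beta-xi}) together with $p^2\le(\lambda^\star)^2(q+\lambda p)^2 g(z)^2$ gives \eqref{inq:q-bigger-0}; when $q<0$, $p\le 0$ forces $q+\lambda p<0$, hence $\beta_\xi''(q+\lambda p)=0$ as in \eqref{inq:q-smaller-0}. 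Integrating against $g\in L^2(E,m)$ and $\psi\rho_m$ thus produces a bound of order $C\xi$, which vanishes upon finally sending $\xi\to 0$ and yields the required inequality.
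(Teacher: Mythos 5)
Your proposal is correct and follows essentially the same route as the paper: $\mathcal{G}_{10}=0$ by predictability, the It\^o--L\'evy product-rule treatment of $\mathcal{H}_8$ with the $\mathcal{M}[\beta,\rho_{m,n}]$-type bounds from \cite[Lemmas 5.4, 5.5]{Majee-2015}, the sign argument $\mathcal{H}_{8,2}\le 0$ from the monotonicity of $\nu$ and $u_\theta\le u_\theta^+$, the Taylor-expansion identity \eqref{eq-g9g10h7h8}, and the case analysis $q\ge 0$ (giving the $C\xi$ bound via \eqref{inq:for-beta-xi} and \ref{A6}) versus $q<0$ (where $\beta_\xi''(q+\lambda p)=0$). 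The only cosmetic difference is that you bound $p^2\le(\lambda^\star)^2(q+\lambda p)^2g(z)^2$ directly, whereas the paper passes through $p^2\le(\lambda^\star)^2q^2g(z)^2$ first; both are equivalent since $0\le q\le q+\lambda p$ in that case.
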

In view of Lemmas \ref{lem:H1-G1}-\ref{lem:G-11-H-10}, \ref{lem:G7-H6}
and \ref{lem:G9-H8} along with \eqref{eq:g5h4} and \eqref{eq:g01-h01}, we have
\begin{align}\label{eq:summarized}
\underset{\xi \rightarrow 0}{\lim}\, (\mathcal{G}_{0,1} + \mathcal{H}_{0,1}) & \le \int_{D}\int_{D}\big((u_\theta^0)^+(y)- (u_\eps^0)^+(x)\big)^+\psi(0,x)\rho_m(x-y)\,dx\,dy \notag \\ & + \mathbb{E}\Big[\int_{\Pi}\int_{D}\big(u_\theta^+(t,y)-u_\eps^+(t,x)\big)^+\partial_t\psi(t,x)\rho_m(x-y)\,dx\,dy\,dt\Big]\notag \\ & -\mathbb{E}\Big[\int_{\Pi}\int_{D}F^+(u_\theta^+(t,y), u_\eps^+(t,x))\nabla_x\psi(t,x)\rho_m(x-y)\,dx\,dy\,dt\Big] + \hat{\mathcal{G}}_1 + \hat{\mathcal{G}}_2 + \mathcal{F}_{u_\eps}(\psi\bar{\rho}_m) \notag \\
& + C(\theta^{\frac{1}{2}} + \eps^{\frac{1}{2}}) =: \sum_{i=1}^{3}\hat{\mathcal{H}}_i + \sum_{i=1}^{2}\hat{\mathcal{G}}_i + \mathcal{F}_{u_\eps}(\psi\bar{\rho}_m) + C(\theta^{\frac{1}{2}} + \eps^{\frac{1}{2}}),
\end{align}
where
\begin{align*}
  \mathcal{F}_{u_\eps}(\psi\bar{\rho}_m) : =
&- \mathbb{E}\Big[\int_{\Pi}\int_{D}\text{sgn}^+(u_\eps^-(t,x))\big \{ u_\eps(t,x)\partial_t\psi(t,x)\rho_m(x-y) - F(u_\eps(t,x)) \nabla_x\big(\psi(t,x)\rho_m(x-y)\big)\notag \\ & + \Phi(u_\eps(t,x))\Delta_x\big(\psi(t,x)\rho_m(x-y)\big) \big \}\,dx\,dt\,dy\Big] -\int_{D}\text{sgn}^+((u_\eps^0)^-)u_\eps^0(x)\psi(0,x)\bar{\rho}_m(x)\,dx\,.
\end{align*}
Using \eqref{eq:liminf-hat-G1} in \eqref{eq:summarized}, we get
\begin{align}\label{inq:surviving-inq}
    0 \le \underset{\xi \rightarrow 0}{\liminf}\, (\mathcal{G}_{0,1} + \mathcal{H}_{0,1}) - \hat{\mathcal{G}}_1 \le \sum_{i=1}^{3}\hat{\mathcal{H}}_i + \hat{\mathcal{G}}_2 + \mathcal{F}_{u_\eps}(\psi\bar{\rho}_m)+ C(\theta^{\frac{1}{2}} + \eps^{\frac{1}{2}}).
\end{align}
To proceed further, we would like to pass to the limit with respect to $\theta$ and $\eps$ in \eqref{inq:surviving-inq} in the Young measure sense. We refer to Dafermos \cite{dafermos} for the deterministic setting and  Balder \cite{Balder} for the stochastic version of Young measure theory. For this purpose, we fix $\theta$ and pass to the limit in $\eps$ in the sense of Young measure. Let us define
\begin{align}
    &\mathcal{C}(t,x, \omega; \mu) := \int_{D}\text{sgn}^+(u_\theta^+(t,y)-\mu)\big(\Phi( u_\theta^+(t,y))-\Phi(\mu)\big)\Delta\psi(t,x)\rho_m(x-y)\,dy.\notag
\end{align}
Observe that $\mathcal{C}$ is a Carath\'eodory function on $\Pi \times \Omega \times \R$ and $\{\mathcal{C}(\cdot, u_\eps)\}$ is  uniformly bounded sequence in $L^2(\Omega \times \Pi)$.
%%%%%%%%%%%%%%%%%%%%%%%%%%
\iffalse 
Indeed, thanks to the assumption \ref{A3},
\begin{align}
    &\mathbb{E}\big[\int_{\Pi}|\mathcal{C}(\cdot;\mu)|^2\,dx\,dt\big]\notag \\ & =  \mathbb{E}\Big[\int_{\Pi}\Big|\int_{D}\text{sgn}^+(u_\theta^+(t,y)-u_\eps(t,x))\big(\Phi( u_\theta^+(t,y))-\Phi(u_\eps(t,x))\big)\Delta\psi(t,x)\rho_m(x-y)\,dy\Big |^2\,dx\,dt\Big]\notag \\ & \le C(\psi)\, \mathbb{E}\Big[\int_{\Pi}\int_{D} \big|u_\theta^+(t,y)-u_\eps(t,x)\big|^2|\rho_m(x-y)|^2\,dy\,dx\,dt\Big] \le C(m). \notag
\end{align}
\fi
%%%%%%%%%%%%%%%%%%%%%%%
Thus, using \cite[Lemma 4.3]{Majee-2014} we have
\begin{align}
    &\underset{\eps \rightarrow 0}{\lim}\,\hat{\mathcal{G}}_2=
    \mathbb{E}\Big[\int_{\Pi}\int_{D}\int_0^1\text{sgn}^+(u_\theta^+(t,y)-{\tt v}^+(t,x, \gamma))\big(\Phi( u_\theta^+(t,y))-\Phi({\tt v}^+(t,x, \gamma))\big)\notag \\
    & \hspace{3cm} \times \Delta\psi(t,x)\rho_m(x-y)\,d\gamma\,dy\,dx\,dt\Big]\equiv {\pmb G}_2\,,\notag
\end{align}
where ${\tt v}(t,x, \gamma)$ is the Young measure-valued limit  of $\{u_\eps(t,x)\}$. Similarly,
%%%%%%%%%%%%%%%%%%%%%%%%%%%
\iffalse 
let us define
\begin{align}
    \Tilde{\mathcal{C}}(t,y,\omega; \mu) := \int_{D}\int_0^1\text{sgn}^+(\mu-{\tt v}(t,x, \gamma))\big(\Phi(\mu)-\Phi({\tt v}(t,x, \gamma))\big)\Delta\psi(t,x)\rho_m(x-y)\,d\gamma\,dx.\notag
\end{align}
Note that $\Tilde{\mathcal{C}}$ is a  Carath\'eodory function on $\Pi \times \Omega \times \R$ and $\Tilde{\mathcal{C}}(\cdot, u_\theta^+)$ is a uniformly bounded sequence in $L^2(\Omega \times \Pi)$. Thus,
\fi
%%%%%%%%%%%%%%%%%%%%%%%%%%
we get
\begin{align}
   &\underset{\theta \rightarrow 0}{\lim}\, {\pmb G}_2
   = \mathbb{E}\Big[\int_{\Pi}\int_{D}\int_0^1\int_0^1\text{sgn}^+({\tt u}^+(t,y, \alpha)-{\tt v}^+(t,x, \gamma))\big(\Phi( {\tt u}^+(t,y, \alpha))-\Phi({\tt v}^+(t,x, \gamma))\big)\notag \\ & \hspace{4.5cm}\times\Delta\psi(t,x)\rho_m(x-y)\,d\alpha\,d\gamma\,dy\,dx\,dt\Big],\notag 
\end{align}
where ${\tt u}(t, y, \alpha)$ is the Young measure-valued limit  of $\{u_\theta(t,y)\}$. Furthermore, one can pass to the limit with respect to $\eps$ and $\theta$ in other terms of \eqref{inq:surviving-inq} to get
\begin{align}\label{inq:1st-step-kato}
0 & \le  \int_{D}\int_{D}\big(\hat{u}_0^+(y)- u_0^+(x)\big)^+\psi(0,x)\rho_m(x-y)\,dx\,dy\notag \\ 
+&\,\mathbb{E}\Big[\int_{\Pi}\int_{D}\int_0^1\int_0^1\big({\tt u}^+(t, y, \alpha)-{\tt v}^+(t,x, \gamma)\big)^+\partial_t\psi(t,x)\rho_m(x-y)\,d\alpha\,d\gamma\,dx\,dy\,dt\Big]\notag \\
    -&\,\mathbb{E}\Big[\int_{\Pi}\int_{D}\int_0^1\int_0^1F^+({\tt u}^+(t, y, \alpha), {\tt v}^+(t,x, \gamma))\nabla_x\psi(t,x)\rho_m(x-y)\,d\alpha\,d\gamma\,dx\,dy\,dt\Big] \notag \\  
+&\,\mathbb{E}\Big[\int_{\Pi}\int_{D}\int_0^1\int_0^1\Phi^+({\tt u}^+(t, y, \alpha), {\tt v}^+(t,x, \gamma))\Delta\psi(t,x)\rho_m(x-y)\,d\alpha\,d\gamma\,dy\,dx\,dt\Big] + \mathcal{F}_{{\tt v}}(\psi\bar{\rho}_m)\notag \\:= & \sum_{i=1}^4 {\tt M}_i + \mathcal{F}_{{\tt v}}(\psi\bar{\rho}_m)\,,
\end{align}
where
\begin{align*}
    \mathcal{F}_{{\tt v}}(\psi\bar{\rho}_m)  : = &- \mathbb{E}\Big[\int_{\Pi}\int_{D}\int_0^1\text{sgn}^+({\tt v}^-(t,x, \gamma))\big \{{\tt v}(t,x,\gamma)\partial_t\psi(t,x)\rho_m(x-y)\notag \\ & - F({\tt v}(t,x, \gamma)) \nabla_x\big(\psi(t,x)\rho_m(x-y)\big) + \Phi({\tt v}(t,x, \gamma))\Delta_x\big(\psi(t,x)\rho_m(x-y)\big) \big \}\,d\gamma\,dx\,dt\,dy\Big] \notag \\ & -\int_{D}\text{sgn}^+((u_0(x))^-)u_0(x)\psi(0,x)\bar{\rho}_m(x)\,dx\,. 
\end{align*}
Observe that, thanks to Cauchy-Schwartz inequality
\begin{align}
   &\Big|{\tt M}_4 - \mathbb{E}\Big[\int_{\Pi}\int_0^1\int_0^1\Phi^+({\tt u}^+(t, x, \alpha), {\tt v}(t,x, \gamma))\Delta\psi(t,x)\,d\alpha\,d\gamma\,dx\,dt\Big]\Big| \notag \\ & \le \mathbb{E}\Big[\int_{\Pi}\int_{D}\int_0^1\int_0^1\Big|\Phi^+({\tt u}^+(t, y, \alpha), {\tt v}(t,x, \gamma)) - \Phi^+({\tt u}^+(t, x, \alpha), {\tt v}(t,x, \gamma))\Big|\notag \\ & \hspace{4cm}\times\Delta\psi(t,x)\rho_m(x-y)\,d\alpha\,d\gamma\,dy\,dx\,dt\Big]\notag \\
   & \le C(\psi)\Big(\mathbb{E}\Big[\int_{\Pi}\int_{D}\int_0^1\big|{\tt u}^+(t, y, \alpha)- {\tt u}^+(t,x, \alpha)\big|^2\rho_m(x-y)\,d\alpha\,dy\,dx\,dt\Big]\Big)^{\frac{1}{2}} \rightarrow 0 \quad \text{as} \quad m \rightarrow \infty.\notag 
\end{align}
Moreover, the entropy formulation for $k = 0$ with any regular non-negative test function $\varpi$ infers that
\begin{align}
% 0 \le & \,\mathbb{E}\Big[\int_{\Pi}\beta_{\xi}^{\prime\prime}(-u_\eps)|\nabla u_\eps|^2\big(\eps + \Phi'(u_\eps))\psi\,dx\,dt\Big] 
 0\le & \,\mathbb{E}\Big[\int_{\Pi}\Big\{\beta_{\xi}(-u_\eps)\partial_t\varpi + \nabla\varpi\int_0^{u_\eps}\beta_{\xi}^{\prime}(-r)F^{\prime}(r)\,dr  -   \Delta\varpi \int_0^{u_\eps}\beta_{\xi}^{\prime}(-r)\Phi^{\prime}(r)\,dr + \eps\beta_{\xi}(-u_\eps)\nabla u_\eps \nabla \varpi \Big\}\,dx\,dt\Big] \notag \\
 &\quad  + \frac{1}{2}\mathbb{E}\Big[\int_{\Pi}\beta_{\xi}^{\prime\prime}(-u_\eps)|\phi(u_\eps)|^2\varpi\,dx\,dt\Big] + \int_{D}\beta_\xi(-u_\eps^0)\varpi(0)\,dx  \notag \\
  & \qquad + \mathbb{E}\Big[\int_{\Pi}\big(\beta_\xi(-u_\eps-\nu(u_\eps;z)) - \beta_\xi(-u_\eps) + \nu(u_\eps;z)\beta_\xi^\prime(-u_\eps)\big)\varpi\,dx\,dt\Big]\,.\label{inq:entropy-k-0}
\end{align}
%%%%%%%%%%%%%%%%%%%%%%%%%
 \iffalse
Note that one can follow similar lines of argument as in \eqref{inq:g7g8h5h6} and \eqref{inq:g9g10h7h8} to conclude the correction terms in \eqref{inq:entropy-k-0} due to Brownian noise and jump noise go to $0$ as $\xi \rightarrow 0$. Thus, passing to the limit as $\xi$ tends to $0$ in the right-hand side of \eqref{inq:entropy-k-0} yields
\begin{align}
 0 & \le \mathbb{E}\Big[\int_{\Pi}\Big\{(-u_\eps)^+\partial_t\varpi + \text{sgn}^+(-u_\eps)F(u_\eps)\nabla\varpi - \text{sgn}^+(-u_\eps)\Phi(u_\eps)\Delta\varpi+ \eps\,\text{sgn}^+(-u_\eps)\nabla u_\eps \nabla \varpi\Big\}\,dx\,dt\Big]\notag \\ 
 &\qquad + \int_{D}(-u_\eps^0)^+\varpi(0)\,dx\,.  \label{inq:entropy-k-1} 
\end{align}
Now, we pass to the limit as $\eps \rightarrow 0$ in \eqref{inq:entropy-k-1} in Young measure sense to obtain, 
\begin{align}
    0 &\le \mathbb{E}\Big[\int_{\Pi}\int_0^1\Big\{(-{\tt v})^+\partial_t\psi + \text{sgn}^+(-{\tt v})F(-{\tt v})\nabla\psi - \text{sgn}^+(-{\tt v})\Phi({\tt v})\Delta\psi \Big\}\,d\gamma\,dx\,dt\Big]\notag \\  &+\int_{D}(-u_0)^+\psi(0)\,dx,\label{inq:entropy-k-2}
\end{align}
\fi
%%%%%%%%%%%%%%%%%%%%%%%%%%%%%%%
Note that one can follow similar lines of argument as in \eqref{inq:g7g8h5h6} and \eqref{inq:g9g10h7h8} to conclude that the correction terms in \eqref{inq:entropy-k-0}, due to Brownian noise and jump noise, go to $0$ as $\xi \rightarrow 0$. Passing to the limit as $\xi$ tends to $0$ and then $\eps \goto 0$ in \eqref{inq:entropy-k-0}, we obtain 
\begin{align}
    0 &\le \mathbb{E}\Big[\int_{\Pi}\int_0^1\Big\{(-{\tt v})^+\partial_t\varpi + \text{sgn}^+(-{\tt v})F({\tt v})\nabla\varpi - \text{sgn}^+(-{\tt v})\Phi({\tt v})\Delta\varpi \Big\}\,d\gamma\,dx\,dt\Big] + \int_{D}(-u_0)^+\varpi(0)\,dx\notag \\
    &=  \mathbb{E}\Big[\int_{\Pi}\int_0^1\Big\{{\tt v}^-\partial_t\varpi + F(-{\tt v}^-)\nabla\varpi -\Phi(-{\tt v}^-)\Delta\varpi \Big\}\,d\gamma\,dx\,dt\Big] +\int_{D}(u_0)^-\varpi(0)\,dx  = \mathcal{F}_{{\tt v}}(\varpi). \label{inq:f-v-psi-non-negative}
\end{align}
%%%%%%%%%%%%%%%
\iffalse 
 Thanks to the fact $(-a)^+ = a^-$ and $ \text{sgn}^+(-a)F(a) = F(-a^-)$, \eqref{inq:entropy-k-2} can be equivalently rewritten as follows.
\begin{align}
    0 \le \mathbb{E}\Big[\int_{\Pi}\int_0^1\Big\{{\tt v}^-\partial_t\psi + F(-{\tt v}^-)\nabla\psi -\Phi(-{\tt v}^-)\Delta\psi \Big\}\,d\gamma\,dx\,dt\Big] +\int_{D}(u_0)^-\psi(0)\,dx  = \mathcal{F}_{{\tt v}}(\psi). \label{inq:f-v-psi-non-negative}
\end{align}
\fi 
%%%%%%%%%%%%%%%%%%%%%%%%%%%%%
Thus, $\mathcal{F}_{{\tt v}}$ is a linear and non-negative operator over $\mathcal{D}^+([0,T] \times \R^d)$. Since $0 \le \psi\bar{\rho}_m \le \psi\bar{\rho}_{m+1} \le \psi$,  $\mathcal{F}_{{\tt v}}(\psi\bar{\rho}_m)$ exists as $m \rightarrow \infty$.
%%%%%%%%%%%%%%%%%%%%%%%%%%%%%%
\iffalse 
Thus, 
\begin{align}
 &\underset{m \rightarrow \infty}{\lim}\mathbb{E}\Big[\int_{\Pi}\int_{D}\int_0^1\int_0^1\Phi^+({\tt u}^+(t, y, \alpha), {\tt v}(t,x, \gamma))\Delta\psi(t,x)\rho_m(x-y)\,d\alpha\,d\gamma\,dy\,dx\,dt\Big]  \notag \\
 & = \mathbb{E}\Big[\int_{\Pi}\int_0^1\int_0^1\Phi^+({\tt u}^+(t, x, \alpha), {\tt v}(t,x, \gamma))\Delta\psi(t,x)\,d\alpha\,d\gamma\,dx\,dt\Big] \notag 
\end{align}
\fi 
%%%%%%%%%%%%%%%%%%%%%%%%%%%%
In view of the above discussion, we pass to the limit as $m$ tends to $\infty$ in \eqref{inq:1st-step-kato}, to get
\begin{align}\label{inq:1st  step to 1st half of kato}
0 & \le\,\mathbb{E}\Big[\int_{\Pi}\int_0^1\int_0^1\big({\tt u}^+(t, x, \alpha)-{\tt v}^+(t,x, \gamma)\big)^+\partial_t\psi(t,x)\,d\alpha\,d\gamma\,dx\,dt\Big]\notag \\
    &-\,\mathbb{E}\Big[\int_{\Pi}\int_0^1\int_0^1F^+({\tt u}^+(t, x, \alpha), {\tt v}^+(t,x, \gamma))\nabla\psi(t,x)\,d\alpha\,d\gamma\,dx\,dt\Big] +  \int_{D}\big(\hat{u}_0^+(x)- u_0^+(x)\big)^+\psi(0,x)\,dx \notag \\  
&+\,\mathbb{E}\Big[\int_{\Pi}\int_0^1\int_0^1\Phi^+({\tt u}^+(t, x, \alpha), {\tt v}^+(t,x, \gamma))\Delta\psi(t,x)\,d\alpha\,d\gamma\,dx\,dt\Big] + \underset{m \rightarrow \infty}{\lim}\,\mathcal{F}_{{\tt v}}(\psi\bar{\rho}_m). 
%\\& \,=: \Lambda_1({\tt u}^+, {\tt v}^+, \psi) +  \underset{m \rightarrow \infty}{\lim}\,\mathcal{F}_{{\tt v}}(\psi\bar{\rho}_m).
\end{align}
\begin{rem}{(\textbf{Local Kato's inequality.})}\label{rem:local-kato} Similar to Section \ref{sec:The first half of Kato's inequality}, for $\psi \in \mathcal{D}^+([0,T] \times D)$, we apply It\^o-le\'vy formula to $\beta_\xi(u_\theta^\kappa(s,y)-k)\psi\rho_m\rho_n$, multiply by $J_l(u_\eps(t,x)-k)$, take expectation and integrate with respect $t,x,k$ to get an entropy inequality  like \eqref{inq:doublingvariable-1st}. Again, applying the It\^o-L\'evy formula to $\int_D \beta_\xi(k-u_\eps(t,x))\psi\rho_m\rho_n\,dx$, multiplying by $J_l(k- u_\theta^\kappa(s,y))$ and integrating with respect to $k,s,y$, taking expectation, one gets the entropy inequality like \eqref{inq:doubling-variable-2nd}. Adding both these inequalities and passing to the limit with respect to various small parameters, by following similar set of argument as done in Section \ref{sec:The first half of Kato's inequality}, we arrive at the following local Kato's inequality: for any $ \psi\in \mathcal{D}^+([0,T] \times D)$, there holds
\begin{align} 
0 &\le\,\mathbb{E}\Big[\int_{\Pi}\int_0^1\int_0^1\big({\tt u}(t, x, \alpha)-{\tt v}(t,x, \gamma)\big)^+\partial_t\psi(t,x)\,d\alpha\,d\gamma\,dx\,dt\Big]\notag \\
    &-\,\mathbb{E}\Big[\int_{\Pi}\int_0^1\int_0^1F^+({\tt u}(t, x, \alpha), {\tt v}(t,x, \gamma))\cdot \nabla\psi(t,x)\,d\alpha\,d\gamma\,dx\,dt\Big] +  \int_{D}\big(\hat{u}_0(x)- u_0(x)\big)^+\psi(0,x)\,dx \notag \\  
&+\,\mathbb{E}\Big[\int_{\Pi}\int_0^1\int_0^1\Phi^+({\tt u}(t, x, \alpha), {\tt v}(t,x, \gamma))\Delta\psi(t,x)\,d\alpha\,d\gamma\,dx\,dt\Big]\,. \notag
\end{align}
\end{rem}
\subsubsection{\textbf{The second half of Kato's Inequality}} %\label{sec:The second half of Kato Inequality}
In this subsection, we are interested in driving the global Kato's inequality for $(b^- - a^-)^+$, as done for the first part of global Kato's inequality. Observe that $-u_\eps$ is a weak solution to the problem 
\begin{equation}\label{eq:-negative-u-eps}
        \displaystyle dv_\eps - \text{div}(\Tilde{F}(v_\eps))\,dt - \Delta \Tilde{\Phi}(v_\eps)\,dt  = \Tilde{\phi}(v_\eps)\,dW(t) + \int_{E} \Tilde{\nu}(v_\eps;z)\widetilde{N}(dz,dt) + \eps\Delta v_\eps,~\text{in}~\Omega \times \Pi,
\end{equation} 
with initial condition $-u_0$, where
$$\Tilde{F}(v_\eps):= - F(-v_\eps),~~ \Tilde{\Phi}(v_\eps):= -\Phi(-v_\eps),~~ \Tilde{\phi}(v_\eps):= - \phi(-v_\eps),~~\Tilde{\nu}(v_\eps;z) := -\nu(-v_\eps;z).$$ The same justification applies to $-u_\theta$. Since both $u_\eps$ and $u_\theta$ play symmetric roles, it is tempting to simply swap out $u_\theta$ and $u_\eps$ and use similar lines of reasoning to acquire  Kato's inequity as done in the previous subsection. However, the argument used in the proof adapted according to viscous solution $u_\theta$ doesn't work if we replace it with $u_\eps$ and vice-versa. Therefore, we prove the second half of Kato's inequality using different argument as needed and illustrate in the following discussion.
\vspace{0.1cm}

To this end, we choose the same partition of unity for $\psi$ in $\mathcal{D}^+([0, T) \times \R^d)$ with $\text{supp}(\psi) \subset \mathcal{B}:= \mathcal{B}_i$ for some $i \in \{1,\cdot\cdot\cdot,\bar{k}\},$ with shifted sequence of mollifier $\rho_m$ in $\R^d$ and $\rho_n$ with $\text{supp}(\rho_n) \subset [-\frac{2}{n}, 0]$ as discussed in Section \ref{sec:Global-Kato -ine}. However, the test function will be $\psi(s,y)\rho_m(y-x)\rho_n(t-s)$. Applying the It\^o-L\'evy formula to $\beta_\xi(\beta_{\Tilde{\xi}}(u_\eps(t,x)) -k)\psi\rho_m\rho_n$, multiplying by $J_l(u_\theta^\kappa(s, y) - k)$, taking expectation and integrating with respect to $t,x,k$, we have
\begin{align}\label{inq:doublingvariabl-3rd}
&\mathbb{E}\Big[\int_{\Pi}\int_{D}\int_{\R}\beta_\xi\big(\beta_{\Tilde{\xi}}(u_\eps(0,x))-k\big)\psi(s,y)\rho_n(-s)\rho_m(y-x)J_l\big(u_\theta^\kappa(s, y) - k\big)\,dk\,dy\,dx\,ds\Big] \notag \\
+\,&\mathbb{E}\Big[\int_{\Pi^2}\int_{\R}\beta_\xi\big(\beta_{\Tilde{\xi}}(u_\eps(t,x))-k\big)\psi(s,y)\partial_t\rho_n(t-s)\rho_m(y-x)J_l\big(u_\theta^\kappa(s, y) - k\big)\,dk\,dy\,ds\,dx\,dt\Big] \notag \\
+ \,& \mathbb{E}\Big[\int_{\Pi^2}\int_{\R}\int_{k}^{\beta_\xi(u_\eps(t,x))}\beta_\xi^{\prime}(r-k)F^\prime(r)\,dr\,\psi(s,y)\rho_n(t-s)\Delta_x\rho_m(y-x)J_l\big(u_\theta^\kappa(s, y) - k\big)\,dk\,dy\,ds\,dx\,dt\Big]\notag \\
%- \,& \mathbb{E}\Big[\int_{\Pi^2}\int_{\R}\Big(\beta_\xi^{\prime\prime}\big(\beta_{\Tilde{\xi}}(u_\eps(t,x))-k\big)|\beta_{\Tilde{\xi}}^\prime(u_\eps(t,x))|^2 + \beta_\xi^{\prime}\big(\beta_{\Tilde{\xi}}(u_\eps(t,x))-k\big)\beta_{\Tilde{\xi}}^{\prime\prime}(u_\eps(t,x))\Big)\nabla u_\eps(t,x)\notag \\&\hspace{3cm}\times F(u_\eps(t,x))\psi(s,y)\rho_n(t-s)\rho_m(y-x)J_l\big(u_\theta^\kappa(s, y) - k\big)\,dk\,dy\,ds\,dx\,dt\Big]\notag \\
- \,& \mathbb{E}\Big[\int_{\Pi^2}\int_{\R}\beta_\xi^{\prime}\big(\beta_{\Tilde{\xi}}(u_\eps(t,x))-k\big)\beta_{\Tilde{\xi}}^\prime(u_\eps(t,x))\nabla\Phi(u_\eps(t,x))\psi(s,y)\rho_n(t-s)\notag \\&\hspace{6cm}\times\nabla_x\rho_m(y-x)J_l\big(u_\theta^\kappa(s, y) - k\big)\,dk\,dy\,ds\,dx\,dt\Big]\notag\\
- \,& \mathbb{E}\Big[\int_{\Pi^2}\int_{\R} \beta_\xi^{\prime}\big(\beta_{\Tilde{\xi}}(u_\eps(t,x))-k\big)\beta_{\Tilde{\xi}}^{\prime\prime}(u_\eps(t,x))\nabla u_\eps(t,x)\nabla\Phi(u_\eps(t,x))\notag \\&\hspace{6cm}\times\psi(s,y)\rho_n(t-s)\rho_m(y-x)J_l\big(u_\theta^\kappa(s, y) - k\big)\,dk\,dy\,ds\,dx\,dt\Big]\notag \\
+\, &\frac{1}{2}\,\mathbb{E}\Big[\int_{\Pi^2}\int_{\R}\Big(\beta_\xi^{\prime\prime}\big(\beta_{\Tilde{\xi}}(u_\eps(t,x))-k\big)|\beta_{\Tilde{\xi}}^\prime(u_\eps(t,x))|^2 + \beta_\xi^{\prime}\big(\beta_{\Tilde{\xi}}(u_\eps(t,x))-k\big)\beta_{\Tilde{\xi}}^{\prime\prime}(u_\eps(t,x))\Big)\notag \\&\hspace{3cm}\times\big|\phi(u_\eps(t,x))\big|^2\psi(s,y)\rho_n(t-s)\rho_m(y-x)J_l\big(u_\theta^\kappa(s, y) - k\big)\,dk\,dy\,ds\,dx\,dt\Big]\notag \\
+&\,\mathbb{E}\Big[\int_{\Pi^2}\int_{\R}\beta_\xi^{\prime}\big(\beta_{\Tilde{\xi}}(u_\eps(t,x))-k\big)\beta_{\Tilde{\xi}}^\prime(u_\eps(t,x))\phi(u_\eps(t,x))\psi(s,y)\rho_n(t-s)\notag \\&\hspace{6cm}\times\rho_m(y-x)J_l\big(u_\theta^\kappa(s, y) - k\big)\,dk\,dy\,ds\,dx\,dW(t)\Big]\notag \\
+&\,\mathbb{E}\Big[\int_{\Pi^2}\int_{\R}\int_{E}\Big(\beta_\xi\big(\beta_{\Tilde{\xi}}(u_\eps(t,x)) + \nu( u_\eps(t,x); z)-k\big) - \beta_{\xi}\big(\beta_{\Tilde{\xi}}(u_\eps(t,x))-k\big)\notag\\  & \hspace{2cm}- \nu( u_\eps(t,x); z)\beta_\xi^\prime\big(\beta_{\Tilde{\xi}}(u_\eps(t,x) -k)\big)\beta_{\Tilde{\xi}}^\prime(u_\eps(t,x)\Big)\psi(s,y)\rho_n(t-s)\rho_m(y-x)\notag\\&\hspace{6cm} \times J_l\big(u_\theta^\kappa(s, y) - k\big)\,dk\,m(dz)\,dy\,ds\,dx\,dt\Big]\notag \\
+ &\,\mathbb{E}\Big[\int_{\Pi^2}\int_{\R}\int_{E}\Big(\beta_\xi\big(\beta_{\Tilde{\xi}}(u_\eps(t,x)) + \nu( u_\eps(t,x); z) -k\big) - \beta_{\xi}\big(\beta_{\Tilde{\xi}}(u_\eps(t,x))-k\big)\Big)\notag \\ &\hspace{3cm}\times\psi(s,y)\rho_n(t-s)\rho_m(y-x)J_l\big(u_\theta^\kappa(s, y) - k\big)\,dk\,\Tilde{N}(dz,dt)\,dy\,dx\,ds\Big] \notag \\
- \,& \eps\mathbb{E}\Big[\int_{\Pi^2}\int_{\R}\beta_\xi^{\prime}\big(\beta_{\Tilde{\xi}}(u_\eps(t,x))-k\big)\beta_{\Tilde{\xi}}^\prime(u_\eps(t,x))\nabla u_\eps(t,x)\psi(s,y)\rho_n(t-s)\notag \\&\hspace{6cm}\times\nabla_x\rho_m(y-x)J_l\big(u_\theta^\kappa(s, y) - k\big)\,dk\,dy\,ds\,dx\,dt\Big]\notag\\
- \,& \eps\mathbb{E}\Big[\int_{\Pi^2}\int_{\R} \Big(\beta_\xi^{\prime}\big(\beta_{\Tilde{\xi}}(u_\eps(t,x))-k\big)\beta_{\Tilde{\xi}}^{\prime\prime}(u_\eps(t,x)) + \beta_\xi^{\prime\prime}\big(\beta_{\Tilde{\xi}}(u_\eps(t,x))-k\big)|\beta_{\Tilde{\xi}}^\prime(u_\eps(t,x))|^2\Big)|\nabla u_\eps(t,x)|^2\notag \\&\hspace{4cm}\times\psi(s,y)\rho_n(t-s)\rho_m(y-x)J_l\big(u_\theta^\kappa(s, y) - k\big)\,dk\,dy\,ds\,dx\,dt\Big]\notag \\
\ge & \, \mathbb{E}\Big[\int_{\Pi^2}\int_{\R}\Big(\beta_\xi^{\prime\prime}\big(\beta_{\Tilde{\xi}}(u_\eps(t,x))-k\big)|\beta_{\Tilde{\xi}}^\prime(u_\eps(t,x))|^2 \nabla u_\eps(t,x)\nabla\Phi(u_\eps(t,x))\notag \\&\hspace{2cm}\times\psi(s,y)\rho_n(t-s)\rho_m(y-x)J_l\big(u_\theta^\kappa(s, y) - k\big)\,dk\,dy\,ds\,dx\,dt\Big]
%+ & \theta\mathbb{E}\Big[\int_{\Pi^2}\int_{\R}\beta_\xi^{\prime\prime}\big(\beta_{\Tilde{\xi}}(u_\eps(t,x))-k\big)|\beta_{\Tilde{\xi}}^\prime(u_\eps(t,x))|^2|\nabla u_\eps(t,x)|^2\notag \\&\hspace{5cm}\times\psi(s,y)\rho_n(t-s)\rho_m(y-x)J_l\big(\beta_{\Tilde{\xi}}(u_\theta^\kappa(s, y)) - k\big)\,dk\,dy\,ds\,dx\,dt\Big] \notag 
=: \sum_{i=1}^{11}\mathcal{I}_{i} \ge \mathcal{I}_0\,. %+ \hat{\mathcal{I}}_0.
\end{align}
On the other hand the It\^o-L\'evy formula applied to $\int_{D}\beta_{\xi}(k - u_\theta^\kappa(s, y))\
\psi(s,y)\rho_n(t-s)\rho_m(y-x)\,dy$ together with the multiplication by $J_l(k- \beta_{\xi}(u_\eps(t,x)))$ and integration with respect to $k,t,x$ yields
\begin{align}\label{inq:doubling-variable-4th}
&\mathbb{E}\Big[\int_{\Pi}\int_{D}\int_{\R}\beta_\xi\big(k- u_\theta(0,y)\big)\psi(0,y)\rho_n(t)\rho_m(y-x)J_l\big(k -\beta_{\Tilde{\xi}}(u_\eps(t,x))\big)\,dk\,dx\,dy\,dt\Big] \notag \\   
+\,& \mathbb{E}\Big[\int_{\Pi^2}\int_{\R}\beta_\xi\big(k-u_\theta^\kappa(s, y)\big)\partial_s\big(\psi(s,y)\rho_n(t-s)\big)\rho_m(y-x)J_l\big(k -\beta_{\Tilde{\xi}}(u_\eps(t,x))\big)\,dk\,dx\,dy\,ds\,dt\Big]\notag \\
+ \,& \mathbb{E}\Big[\int_{\Pi^2}\int_{\R}\beta_\xi^\prime(k-u_\theta^\kappa(s,y))(F(u_\theta(s,y))\ast \rho_\kappa)\Delta_y\big(\psi(s,y)\rho_m(y-x)\big)\rho_n(t-s)\notag \\ & \hspace{8cm}\times J_l\big(k -\beta_{\Tilde{\xi}}(u_\eps(t,x))\big)\,dk\,dx\,dy\,ds\,dt\Big]\notag \\
- \,& \mathbb{E}\Big[\int_{\Pi^2}\int_{\R}\beta_\xi^{\prime\prime}(k-u_\theta^\kappa(s,y))(F(u_\theta(s,y))\ast \rho_\kappa)\nabla u_\theta^\kappa(s,y)\nabla_y\big(\psi(s,y)\rho_m(y-x)\big)\rho_n(t-s)\notag \\ & \hspace{8cm}\times J_l\big(k -\beta_{\Tilde{\xi}}(u_\eps(t,x))\big)\,dk\,dx\,dy\,ds\,dt\Big]\notag \\
+ \,& \mathbb{E}\Big[\int_{\Pi^2}\int_{\R}\beta_\xi^\prime\big(k-u_\theta^\kappa(s, y)\big)(\nabla\Phi(u_\theta(s, y))\ast \rho_\kappa)\nabla_y\big(\psi(s,y)\rho_m(y-x)\big)\rho_n(t-s)\notag \\ &\hspace{8cm} \times J_l\big(k -\beta_{\Tilde{\xi}}(u_\eps(t,x))\big)\,dk\,dx\,dy\,ds\,dt\Big]\notag \\
+ \,& \frac{1}{2}\mathbb{E}\Big[\int_{\Pi^2}\int_{\R}\beta_\xi^{\prime\prime}(k-u_\theta^\kappa(s, y))|\phi( u_\theta(s, y))\ast \rho_\kappa|^2\psi(s,y)\rho_m(y-x)\rho_n(t-s) \notag \\
& \hspace{8cm} \times J_l\big(k -\beta_{\Tilde{\xi}}(u_\eps(t,x))\big)\,dk\,dx\,dy\,ds\,dt\Big]\notag \\
-\,&\mathbb{E}\Big[\int_{\Pi^2}\int_{\R}\beta_\xi^{\prime}(k-u_\theta^\kappa(s, y))(\phi( u_\theta(s, y))\ast \rho_\kappa)\psi(s,y)\rho_m(y-x)\rho_n(t-s) \notag \\
& \hspace{7cm} \times J_l\big(k -\beta_{\Tilde{\xi}}(u_\eps(t,x))\big)\,dk\,dx\,dy\,dt\,dW(s)\Big]\notag \\
+ &\mathbb{E}\Big[\int_{\Pi^2}\int_{E}\int_{\R}\Big(\beta_\xi\big(k-u_\theta^\kappa(s, y) - \nu(u_\theta(s, y);z)\ast \rho_\kappa\big) - \beta_\xi(k-u_\theta^\kappa(s, y))\notag \\ & \hspace{3cm}  + (\nu(u_\theta(s, y);z)\ast \rho_\kappa)\beta^\prime(k- u_\theta^\kappa(s, y))\Big)\psi(s,y)\rho_m(y-x)\rho_n(t-s) \notag \\
& \hspace{6cm} \times J_l\big(k -\beta_{\Tilde{\xi}}(u_\eps(t,x))\big)\,dk\,m(dz)\,dx\,dy\,ds\,dt\Big]\notag \\ 
+ & \mathbb{E}\Big[\int_{\Pi^2}\int_{E}\int_{\R}\Big(\beta_\xi\big(k-u_\theta^\kappa(s, y) - \nu(u_\theta(s, y);z)\ast \rho_\kappa\big) - \beta_\xi(k-u_\theta^\kappa(s, y))\Big)\psi(s,y)\rho_m(y-x)\rho_n(t-s)\notag \\ &\hspace{6cm}\times J_l\big(k -\beta_{\Tilde{\xi}}(u_\eps(t,x))\big)\,dk\,\widetilde{N}(dz,ds)\,dx\,dy\,dt\Big]\notag \\ 
+ \,& \theta\mathbb{E}\Big[\int_{\Pi^2}\int_{\R}\beta_\xi^\prime(k-u_\theta^\kappa(s, y))\nabla u_\theta^\kappa(s, y)\nabla_x\big(\psi(s,y)\rho_m(y-x)\big)\rho_n(t-s)J_l\big(k -\beta_{\Tilde{\xi}}(u_\eps(t,x))\big)\,dk\,dx\,dy\,ds\,dt\Big]\notag \\
-&  \theta\mathbb{E}\Big[\int_{\Pi^2}\int_{\R}\beta_\xi^{\prime\prime}(k-u_\theta^\kappa(s, y))|\nabla u_\theta^\kappa(s, y)|^2\psi(s,y)\rho_m(y-x)\rho_n(t-s)\notag \\& \hspace{9cm}\times J_l\big(k -\beta_{\Tilde{\xi}}(u_\eps(t,x))\big)\,dk\,dx\,dy\,ds\,dt\Big]
\notag \\
\ge \, &\mathbb{E}\Big[\int_{\Pi^2}\int_{\R}\beta_\xi^{\prime\prime}\big(k-u_\theta^\kappa(s, y)\big)|\nabla u_\theta^\kappa(s, y)|(\nabla\Phi(u_\theta(s, y))\ast\rho_\kappa)\psi(s,y)\rho_m(y-x)\rho_n(t-s)\notag \\& \hspace{5cm}\times J_l\big(k -\beta_{\Tilde{\xi}}(u_\eps(t,x))\big)\,dk\,dx\,dy\,ds\,dt\Big]
=: \sum_{i=1}^{11}\mathcal{J}_i \ge \mathcal{J}_0\,. %+ \hat{\mathcal{H}}_{0}.
\end{align}
Similar to the previous section, we  add \eqref{inq:doublingvariabl-3rd} and \eqref{inq:doubling-variable-4th} and pass to the limit with respect to $n, \kappa, l, \Tilde{\xi}, \xi, \eps, \theta$ and  $m$. Since  $\text{supp}(\rho_n) \subset [-\frac{2}{n}, 0]$,  we see that $\mathcal{J}_1 = 0$. Similar to Lemma \ref{lem:H1-G1}, we have
\begin{lem}\label{lem:I-1-J-1}
\begin{align}
 \underset{\xi \rightarrow 0}{\lim}\,\underset{\tilde{\xi} \rightarrow 0}{\lim}\,\underset{l \rightarrow \infty}{\lim}\,\underset{\kappa \rightarrow 0}{\lim}\,\underset{n \rightarrow \infty}{\lim} (\mathcal{I}_1 + \mathcal{J}_1) = & \int_{D}\int_{D}((u_\eps^0)^+(x))-(u_\theta^0)^+(y)\big)^+\psi(0,y)\rho_m(y-x)\,dy\,dx\notag \\
 & - \int_D\text{sgn}^+((u_\theta^0)^-(y))u_\theta^0(y)\psi(0,y)\bar{\rho}_m(y)\,dy\,.\notag
\end{align}
\end{lem}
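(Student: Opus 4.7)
The argument mirrors exactly that of Lemma \ref{lem:H1-G1}, with the roles of $(u_\theta,y)$ and $(u_\eps,x)$ interchanged. The plan is the following.

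First I would observe that $\mathcal{J}_1=0$. Indeed, its integrand carries the factor $\rho_n(t)$ with $\mathrm{supp}(\rho_n)\subset[-2/n,0]$, while $t$ runs over $(0,T)$, so the $t$-integration vanishes identically. Hence only the contribution of $\mathcal{I}_1$ survives the limit $n\to\infty$.

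Next, I would pass the five limits through $\mathcal{I}_1$ in the prescribed order. For $n\to\infty$, the factor $\rho_n(-s)$ concentrates at $s=0$ (from the right), which by the Lebesgue point theorem applied to the mapping $s\mapsto \psi(s,y)J_l(u_\theta^\kappa(s,y)-k)$ replaces $\psi(s,y)$ by $\psi(0,y)$ and $u_\theta^\kappa(s,y)$ by $u_\theta^\kappa(0,y)=u_\theta^0\ast\rho_\kappa(y)$, while removing the $s$-integration. The limit $\kappa\to 0$ then yields $u_\theta^0(y)$ in place of $u_\theta^\kappa(0,y)$, using the $L^2(D)$-convergence of the modified mollification introduced in \eqref{eq:mollification}. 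Finally, $l\to\infty$ contracts $J_l(u_\theta^0(y)-k)\,dk$ to the Dirac mass $\delta_{u_\theta^0(y)}$, eliminating the $k$-variable. At this point one is left with
\[
\int_D\!\int_D \beta_\xi\!\left(\beta_{\tilde\xi}(u_\eps^0(x))-u_\theta^0(y)\right)\psi(0,y)\rho_m(y-x)\,dx\,dy.
\]

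Passing $\tilde\xi\to 0$ (dominated convergence together with the uniform bound $|\beta_{\tilde\xi}(r)-r^+|\le C\tilde\xi$ from \eqref{inq:for-beta-xi} and the Lipschitz continuity of $\beta_\xi$) and then $\xi\to 0$ (again using \eqref{inq:for-beta-xi}) produces
\[
\int_D\!\int_D \big((u_\eps^0)^+(x)-u_\theta^0(y)\big)^+ \psi(0,y)\rho_m(y-x)\,dx\,dy.
\]
The final step is to apply identity \eqref{identity-for-dt-psi-0} with $a=u_\eps^0(x)$ and $b=u_\theta^0(y)$, which rewrites the integrand as $\big((u_\eps^0)^+(x)-(u_\theta^0)^+(y)\big)^+ - \mathrm{sgn}^+((u_\theta^0)^-(y))u_\theta^0(y)$. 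The first piece is exactly the first term on the right-hand side of the claim, and integrating the second piece in $x$ produces the factor $\bar\rho_m(y)=\int_D\rho_m(y-x)\,dx$, yielding the stated expression.

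I do not foresee a genuine obstacle: the argument is the symmetric counterpart of Lemma \ref{lem:H1-G1} and uses only the ingredients already assembled (properties of $\rho_m$, $\rho_n$, $J_l$, the modified mollifier \eqref{eq:mollification}, inequality \eqref{inq:for-beta-xi} and identity \eqref{identity-for-dt-psi-0}). The only point requiring mild care is the order of limits: $n\to\infty$ must be taken first so that the $s$-integration collapses before using continuity of translations in the $\kappa$ variable, and $\tilde\xi\to 0$ must precede $\xi\to 0$ because $\beta_\xi$ is evaluated at the argument $\beta_{\tilde\xi}(\cdot)-u_\theta^0(y)$.
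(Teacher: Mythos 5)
Your proposal is correct and follows essentially the same route as the paper: the paper also notes that $\mathcal{J}_1=0$ because of the factor $\rho_n(t)$ with $\mathrm{supp}(\rho_n)\subset[-\tfrac{2}{n},0]$, and then treats $\mathcal{I}_1$ exactly as the symmetric counterpart of Lemma \ref{lem:H1-G1}, passing the limits in the order $n\to\infty$, $\kappa\to 0$, $l\to\infty$, $\tilde{\xi}\to 0$, $\xi\to 0$ and concluding with identity \eqref{identity-for-dt-psi-0}. Your spelled-out version of that symmetry argument, including the appearance of $\bar{\rho}_m(y)$ from the $x$-integration, matches the paper's intended proof.
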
 
To deal with $\mathcal{I}_2$ and $\mathcal{J}_2$, we pass to the limit as done in Lemma \ref{lem:G-2-H-3}, to get 
\begin{lem} % \label{lem:I2J2}
\begin{align} 
    \underset{\xi \rightarrow 0}{\lim}\,\underset{\tilde{\xi} \rightarrow 0}{\lim}\,\underset{l \rightarrow \infty}{\lim}\,\underset{\kappa \rightarrow 0}{\lim}\,\underset{n \rightarrow \infty}{\lim} (\mathcal{I}_2 + \mathcal{J}_2)= &  \mathbb{E}\Big[\int_{\Pi}\int_{D}\big(u_\eps^+(t,x)-u_\theta^+(t,y)\big)^+\partial_t\psi(t,y)\rho_m(y-x)\,dx\,dy\,dt\Big] \notag \\
    & - \mathbb{E}\Big[\int_{\Pi}\text{sgn}^+(u_\theta^-(t,y))u_\theta(t,y)\partial_t\psi(t,y)\bar{\rho}_m(y)\,dy\Big]\,.\notag
\end{align}    
\end{lem}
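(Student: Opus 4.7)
The plan is to mimic the proof of Lemma \ref{lem:G-2-H-3}, with the roles of $u_\eps$ and $u_\theta$ interchanged, taking care of the new placement of $\rho_n, \rho_m, \psi$. The starting point is the algebraic observation that $(\partial_t+\partial_s)\rho_n(t-s)=0$, which means $\partial_s\rho_n(t-s)=-\partial_t\rho_n(t-s)$. Expanding the product rule in $\mathcal{J}_2$,
\begin{align*}
\partial_s\big(\psi(s,y)\rho_n(t-s)\big)=\partial_s\psi(s,y)\rho_n(t-s)-\psi(s,y)\partial_t\rho_n(t-s),
\end{align*}
so the $\partial_t\rho_n$ pieces of $\mathcal{I}_2$ and of $-\psi\,\partial_t\rho_n$ from $\mathcal{J}_2$ should combine and, as $l\to\infty$, their integrands agree: using the symmetry of the mollifier $J_l$ in $k$, both
$\int_\mathbb{R}\beta_\xi(\beta_{\tilde\xi}(u_\eps)-k)J_l(u_\theta^\kappa-k)\,dk$ and $\int_\mathbb{R}\beta_\xi(k-u_\theta^\kappa)J_l(k-\beta_{\tilde\xi}(u_\eps))\,dk$ converge to the same quantity $\beta_\xi(\beta_{\tilde\xi}(u_\eps)-u_\theta^\kappa)$, so these two contributions cancel.

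What survives in $\mathcal{I}_2+\mathcal{J}_2$ is the $\partial_s\psi(s,y)\rho_n(t-s)$ piece from $\mathcal{J}_2$:
\begin{align*}
\mathbb{E}\Big[\int_{\Pi^2}\int_\mathbb{R}\beta_\xi(k-u_\theta^\kappa(s,y))\partial_s\psi(s,y)\rho_n(t-s)\rho_m(y-x)J_l(k-\beta_{\tilde\xi}(u_\eps(t,x)))\,dk\,dx\,dy\,ds\,dt\Big].
\end{align*}
I would then send $l\to\infty$, $\kappa\to 0$, and $n\to\infty$ in the by-now standard way (Lebesgue point theorem for $J_l$, strong $L^p$-continuity of the shifted mollification $\ast\rho_\kappa$, and the continuity of translations in time to collapse $\rho_n(t-s)$), arriving at
\begin{align*}
\mathbb{E}\Big[\int_\Pi\int_D\beta_\xi\big(\beta_{\tilde\xi}(u_\eps(t,x))-u_\theta(t,y)\big)\partial_t\psi(t,y)\rho_m(y-x)\,dx\,dy\,dt\Big].
\end{align*}

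Next, I would pass $\tilde\xi\to 0$ to replace $\beta_{\tilde\xi}(u_\eps)$ by $u_\eps^+$ (uniform $|\beta_{\tilde\xi}(r)-r^+|\le C\tilde\xi$ and Lipschitzness of $\beta_\xi$), and then $\xi\to 0$ to replace $\beta_\xi$ by the positive part. The resulting integrand is $\big(u_\eps^+(t,x)-u_\theta(t,y)\big)^+\partial_t\psi(t,y)\rho_m(y-x)$. Finally I would apply the identity \eqref{identity-for-dt-psi-0} with $a=u_\eps$ and $b=u_\theta$,
\begin{align*}
\big(u_\eps^+-u_\theta\big)^+=\big(u_\eps^+-u_\theta^+\big)^+-\mathrm{sgn}^+(u_\theta^-)\,u_\theta,
\end{align*}
and perform the trivial $x$-integration $\int_D\rho_m(y-x)\,dx=\bar\rho_m(y)$ in the second summand to reach exactly the two terms claimed in the lemma.

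The only subtlety I expect is the cancellation of the two $\partial_t\rho_n$ contributions before taking any other limit: one must first take $l\to\infty$ (so that the $J_l$'s produce the same symmetric expression $\beta_\xi(\beta_{\tilde\xi}(u_\eps)-u_\theta^\kappa)$ in both integrals) and exploit $\partial_s\rho_n(t-s)=-\partial_t\rho_n(t-s)$ together with Fubini, rather than attempting to control the two terms individually, where the factors $\partial_t\rho_n=O(n^2)$ would blow up. Everything else is routine, parallel to Lemma \ref{lem:G-2-H-3}.
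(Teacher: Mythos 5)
Your overall route and endpoint coincide with the paper's proof, which simply invokes the argument of Lemma \ref{lem:G-2-H-3}: cancel the $\partial_t\rho_n$ contributions, pass to the limits in the surviving $\partial_s\psi\,\rho_n(t-s)$ term, and finish with the identity \eqref{identity-for-dt-psi-0} (with $a=u_\eps$, $b=u_\theta$) and $\int_D\rho_m(y-x)\,dx=\bar\rho_m(y)$. Those parts of your write-up are fine.

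The flaw is in how you justify the cancellation. You assert that the two $k$-integrals only agree \emph{as} $l\to\infty$, and you therefore propose to take $l\to\infty$ first. But the statement prescribes the iterated limit with $n\to\infty$ innermost, then $\kappa\to 0$, and only then $l\to\infty$; performing $l\to\infty$ before $n\to\infty$ is a different iterated limit, and interchanging them is precisely the kind of step that needs justification (as you yourself note, at fixed $l$ any residual discrepancy would be multiplied by $\partial_t\rho_n=O(n^2)$). The point you are missing is that no interchange is needed: the two $k$-integrals are \emph{identically equal} for every fixed $l,\kappa,\xi,\tilde{\xi}$. Indeed, substituting $k=u_\theta^\kappa(s,y)-r$ in one and $k=\beta_{\tilde{\xi}}(u_\eps(t,x))+r$ in the other gives
\begin{align*}
\int_{\R}\beta_\xi\big(\beta_{\tilde{\xi}}(u_\eps)-k\big)J_l\big(u_\theta^\kappa-k\big)\,dk
=\int_{\R}\beta_\xi\big(\beta_{\tilde{\xi}}(u_\eps)-u_\theta^\kappa+r\big)J_l(r)\,dr
=\int_{\R}\beta_\xi\big(k-u_\theta^\kappa\big)J_l\big(k-\beta_{\tilde{\xi}}(u_\eps)\big)\,dk\,,
\end{align*}
with no symmetry of $J_l$ required. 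Hence the $\psi\,\partial_t\rho_n$ piece of $\mathcal{I}_2$ cancels the $-\psi\,\partial_t\rho_n$ piece of $\mathcal{J}_2$ exactly, pointwise, before any limit is taken---this is exactly how the paper treats $\mathcal{G}_2+\mathcal{H}_2$, where $(\partial_t+\partial_s)\rho_n(t-s)=0$ yields an exact identity, not an asymptotic statement. With this one-line repair your remaining limit passages go through in the prescribed order and deliver the lemma.
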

%Similar to terms $\mathcal{G}_3, \mathcal{G}_4$ and $\mathcal{H}_3$, we pass to the limit in the terms associated with the flux function. \begin{align}\label{eq:i3i4j3}&\underset{\xi \rightarrow 0}{\lim}\,\underset{\tilde{\xi} \rightarrow 0}{\lim}\,\underset{l \rightarrow \infty}{\lim}\,\underset{\kappa \rightarrow 0}{\lim}\,\underset{n \rightarrow \infty}{\lim} \mathcal{I}_3  = - \mathbb{E}\Big[\int_{\Pi}\int_{D}F^+\big(u_\eps^+(t,x), u_\theta(t,y)\big)\psi(t,y)\nabla_x\rho_m(y-x)\,dx\,dy\,dt\Big], \notag \\ &\underset{\xi \rightarrow 0}{\lim}\,\underset{\tilde{\xi} \rightarrow 0}{\lim}\,\underset{l \rightarrow \infty}{\lim}\,\underset{\kappa \rightarrow 0}{\lim}\,\underset{n \rightarrow \infty}{\lim}(\mathcal{J}_3 + \mathcal{J}_4) = - \mathbb{E}\Big[\int_{\Pi}\int_{D}F^+\big(u_\eps^+(t,x), u_\theta(t,y)\big)\nabla_y\big(\psi(t,y)\rho_m(y-x)\big)\,dx\,dy\,dt\Big].\end{align}
Regarding the terms associated with flux function, we have the following lemma.
\begin{lem} %\label{lem:I3-I4-J3}
It holds that
    \begin{align} 
        \underset{\xi \rightarrow 0}{\lim}\,\underset{\tilde{\xi} \rightarrow 0}{\lim}\,\underset{l \rightarrow \infty}{\lim}\,\underset{\kappa \rightarrow 0}{\lim}\,\underset{n \rightarrow \infty}{\lim} \big(\mathcal{I}_3 +\sum_{i=3}^4 \mathcal{J}_i\big) = &  - \mathbb{E}\Big[\int_{\Pi}\int_{D}F^+\big(u_\eps^+(t,x), u_\theta^+(t,y)\big)\nabla_y\psi(t,y)\rho_m(y-x)\,dx\,dy\,dt\Big]\notag\\
        & + \mathbb{E}\Big[\int_{\Pi}\int_{D}\text{sgn}^+(u_\theta^-)F(u_\theta)\nabla_y\big(\psi(t,y)\rho_m(y-x)\big)\,dx\,dy\,dt\Big]\notag\,.
    \end{align}
\end{lem}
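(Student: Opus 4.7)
The approach mirrors the derivation of $\mathcal{G}_3 + \mathcal{G}_4 + \mathcal{H}_3$ in Section~\ref{sec:The first half of Kato's inequality}, with the roles of $(\theta,y)$ and $(\eps,x)$ interchanged to reflect the new test function $\psi(s,y)\rho_m(y-x)\rho_n(t-s)$. The plan is to first send $n\to\infty$, $\kappa\to 0$, $l\to\infty$ in $\mathcal{J}_3+\mathcal{J}_4$ using the Lebesgue point theorem, properties of convolution, and the fact that as $\kappa \to 0$ we have $(F(u_\theta)\ast\rho_\kappa)\nabla u_\theta^\kappa$ converging to $F^\prime(u_\theta)|\nabla u_\theta|$ in a chain-rule sense. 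The combination $\mathcal{J}_3+\mathcal{J}_4$ collapses to
$$\mathbb{E}\Big[\int_\Pi\int_D \int_{u_\eps(t,x)}^{u_\theta(t,y)} \beta_{\tilde\xi}^\prime(r)\beta_\xi^\prime\big(\beta_{\tilde\xi}(r)-u_\eps(t,x)\big)F^\prime(r)\,dr\,\psi(t,y)\,\Delta_y\rho_m(y-x)\,dx\,dy\,dt\Big]$$
after integration by parts in $y$. Here one uses that, by the choice of $\rho_m$, the map $y\mapsto \rho_m(y-x)$ lies in $\mathcal{D}(D)$ for $x\in \mathcal{B}\cap D$, so no boundary term appears.

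Next I would send $\tilde\xi\to 0$ and then $\xi\to 0$, which replaces $\beta_{\tilde\xi}^\prime(r)$ by $\text{sgn}^+(r)$ and $\beta_\xi^\prime$ by $\text{sgn}^+$, and gives after a change of variable $r\mapsto r^+$ a contribution of the form
$$\mathbb{E}\Big[\int_\Pi\int_D \text{sgn}^+\big(u_\theta^+(t,y)-u_\eps(t,x)\big)\big(F(u_\theta^+)-F(u_\eps^+)\big)\psi(t,y)\,\nabla_y\rho_m(y-x)\cdot\text{(one }\nabla_y\text{)}\,dx\,dy\,dt\Big]$$
(a further $\nabla_y$ comes out after integration by parts). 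Treating $\mathcal{I}_3$ analogously — pass $n,\kappa,l$ first, then exploit the chain-rule structure of $\int_k^{\beta_\xi(u_\eps)}\beta_\xi^\prime(r-k)F^\prime(r)\,dr$, send $\tilde\xi,\xi\to 0$, and use the shift $\nabla_x\rho_m(y-x)=-\nabla_y\rho_m(y-x)$ — produces the mirror contribution with $F^+(u_\eps^+(t,x),u_\theta^+(t,y))$ paired against $\nabla_x\big(\psi(t,y)\rho_m(y-x)\big)$.

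Combining these three contributions by using identity \eqref{identity-for-dt-psi-01} in the form
$\text{sgn}^+(u_\eps^+-u_\theta)(F(u_\eps^+)-F(u_\theta))=\text{sgn}^+(u_\eps^+-u_\theta^+)(F(u_\eps^+)-F(u_\theta^+))-\text{sgn}^+(u_\theta^-)F(u_\theta)$, together with $\text{sgn}^+(u_\eps^+-u_\theta^+)=\text{sgn}^+(u_\eps^+-u_\theta)$ and $\nabla_x\rho_m(y-x)=-\nabla_y\rho_m(y-x)$, the derivatives of $\rho_m$ cancel pairwise, and only the $\nabla_y\psi$ factor remains in the first surviving piece, together with the boundary-type correction $\text{sgn}^+(u_\theta^-)F(u_\theta)\nabla_y(\psi(t,y)\rho_m(y-x))$ carried out of the cancellation. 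This is exactly the right-hand side of the stated lemma.

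The main obstacle is the careful bookkeeping of which variable each derivative acts upon: since $\psi$ now depends on $y$ rather than $x$, the first-half trick of writing everything in terms of $\nabla_x\psi$ has to be reversed, and one must verify that after integrations by parts in the mollified variable all cross-terms cancel. A secondary subtlety is that the regularization $u_\theta^\kappa$ is carried out with the modified boundary-adapted mollifier from \eqref{eq:mollification}, so when passing $\kappa\to 0$ one has to invoke Young's inequality for this convolution and the uniform bound \eqref{inq:apriori-bound} rather than the standard convolution estimates; however this step is routine and proceeds as in the analogous limit of $\mathcal{G}_3+\mathcal{G}_4$.
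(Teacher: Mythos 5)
Your high-level skeleton --- mirror the first-half computation, apply \eqref{identity-for-dt-psi-01} with $a=u_\eps$, $b=u_\theta$, and let the $\rho_m$-derivative parts cancel via $\nabla_x\rho_m(y-x)=-\nabla_y\rho_m(y-x)$ --- is indeed the paper's (implicit) route, and your final combination step is stated correctly. But the intermediate limits you compute are structurally wrong, in exactly the way the paper warns about when it says the second half is \emph{not} obtained by swapping $u_\theta$ and $u_\eps$ in the first-half argument. In \eqref{inq:doubling-variable-4th} the $\mathcal{J}$-terms come from the \emph{single} composition $\beta_\xi(k-u_\theta^\kappa)$, with $J_l\big(k-\beta_{\tilde{\xi}}(u_\eps(t,x))\big)$ concentrating $k$ at $\beta_{\tilde{\xi}}(u_\eps(t,x))$; hence the limit of $\mathcal{J}_3+\mathcal{J}_4$ must be the mirror of \eqref{inq:H3}, namely
\begin{align}
\mathbb{E}\Big[\int_{\Pi}\int_{D}\int_{u_\eps^+(t,x)}^{u_\theta(t,y)}\text{sgn}^+\big(u_\eps^+(t,x)-r\big)F^\prime(r)\,dr\cdot\nabla_y\big(\psi(t,y)\rho_m(y-x)\big)\,dx\,dy\,dt\Big],\notag
\end{align}
i.e.\ $-\,\mathbb{E}\big[\int\int\text{sgn}^+(u_\eps^+-u_\theta)\big(F(u_\eps^+)-F(u_\theta)\big)\cdot\nabla_y(\psi\rho_m)\big]$. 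Your displayed formula instead carries the double-composition integrand $\beta_{\tilde{\xi}}^\prime(r)\beta_\xi^\prime(\beta_{\tilde{\xi}}(r)-u_\eps)$, which is the structure of $\mathcal{G}_3+\mathcal{G}_4$ in \eqref{inq:G3G4} (and here of $\mathcal{I}_3$, since in \eqref{inq:doublingvariabl-3rd} the double composition sits on the \emph{non-mollified} $u_\eps$), and it pairs the vector-valued integral of $F^\prime$ with the scalar $\Delta_y\rho_m(y-x)$: a Laplacian cannot occur in a flux term (it belongs to the $\Phi$-terms, cf.\ \eqref{inq:g5}); a first-order term does not acquire a second $\nabla_y$ by integration by parts.

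You have also swapped which side carries the full product derivative. Since $\psi$ now depends on $y$, it is the $\mathcal{J}$-side ($y$-integration by parts) that produces $\nabla_y(\psi\rho_m)$, while $\mathcal{I}_3$ ($x$-integration by parts) produces only $\psi(t,y)\nabla_x\rho_m(y-x)$; your proposal assigns them the other way around. This is not cosmetic: the identity \eqref{identity-for-dt-psi-01} in the form you quote applies to a term of the shape $\text{sgn}^+(u_\eps^+-u_\theta)\big(F(u_\eps^+)-F(u_\theta)\big)$, and \emph{neither} of your two intermediate expressions has this shape (one has $\text{sgn}^+(u_\theta^+-u_\eps)\big(F(u_\theta^+)-F(u_\eps^+)\big)$, the other is already $F^+(u_\eps^+,u_\theta^+)$), so your final step cannot be executed on what you derived. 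The correct bookkeeping is: the $\mathcal{J}_3+\mathcal{J}_4$ limit is the term split by \eqref{identity-for-dt-psi-01}, yielding both the boundary correction $\text{sgn}^+(u_\theta^-)F(u_\theta)$ against the full derivative $\nabla_y(\psi\rho_m)$ and a clean piece $-\mathbb{E}\big[\int\int F^+(u_\eps^+,u_\theta^+)\cdot\big(\nabla_y\psi\,\rho_m+\psi\nabla_y\rho_m\big)\big]$; the $\mathcal{I}_3$ limit is $+\mathbb{E}\big[\int\int F^+(u_\eps^+,u_\theta^+)\,\psi\,\nabla_y\rho_m\big]$ (after $\nabla_x\rho_m=-\nabla_y\rho_m$ and $\text{sgn}^+(a^+-b^+)=\text{sgn}^+(a^+-b)$), which cancels exactly the $\psi\nabla_y\rho_m$ part, leaving the two terms of the lemma. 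Once you redo the two limits with this composition/derivative assignment, your cancellation scheme goes through.
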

Now, we focus on the degenerate terms. Observe that $\underset{l \rightarrow \infty}{\lim}\,\underset{\kappa \rightarrow 0}{\lim}\,\underset{n \rightarrow \infty}{\lim}  \mathcal{I}_5 \le 0,$ and 
\begin{align}\label{eq:hatJ-1-2}
   &\underset{\xi \rightarrow 0}{\lim}\,\underset{\tilde{\xi} \rightarrow 0}{\lim}\,\underset{l \rightarrow \infty}{\lim}\,\underset{\kappa \rightarrow 0}{\lim}\,\underset{n \rightarrow \infty}{\lim} \big(\mathcal{I}_4 + \mathcal{J}_5)\notag \\  & =  \mathbb{E}\Big[\int_{\Pi}\int_{D}\text{sgn}^+\big(u_\eps^+(t,x)-u_\theta(t,y)\big)\big(\Phi(u_\eps^+(t,x)) - \Phi(u_\theta^+(t,y))\big)\psi(t,y)\Delta_x\rho_m(y-x)\,dy\,dx\,dt\Big]\notag\\
   & +\mathbb{E}\Big[\int_{\Pi}\int_{D}\text{sgn}^+\big(u_\eps^+(t,x)-u_\theta^+(t,y)\big)\big(\Phi(u_\eps^+(t,x)) - \Phi(u_\theta^+(t,y))\big)\Delta_y\big(\psi(t,y)\rho_m(y-x)\big)\,dx\,dy\,dt\Big]\notag \\
   &-\mathbb{E}\Big[\int_{\Pi}\int_{D}\text{sgn}^+(u_\theta^-(t,y))\Phi(u_\theta^+(t,y))\Delta_y\big(\psi(t,y)\rho_m(y-x)\big)\,dx\,dy\,dt\Big]\notag \\
   & = 2\mathbb{E}\Big[\int_{\Pi}\int_{D}\text{sgn}^+\big(u_\eps^+(t,x)-u_\theta^+(t,y)\big)\big(\Phi(u_\eps^+(t,x)) - \Phi(u_\theta^+(t,y))\big)\notag \\& \hspace{6cm}\times\big(\psi(t,y)\Delta_x\rho_m(y-x) - \nabla\psi(t,y)\nabla_x\rho_m(y-x)\big)\,dx\,dy\,dt\Big]\notag \\&+\mathbb{E}\Big[\int_{\Pi}\int_{D}\text{sgn}^+\big(u_\eps^+(t,x)-u_\theta^+(t,y)\big)\big(\Phi(u_\eps^+(t,x)) - \Phi(u_\theta^+(t,y))\big)\Delta_y\psi(t,y)\rho_m(y-x)\,dx\,dy\,dt\Big]\notag \\
   &-\mathbb{E}\Big[\int_{\Pi}\int_{D}\text{sgn}^+(u_\theta^-(t,y))\Phi(u_\theta^+(t,y))\Delta_y\big(\psi(t,y)\rho_m(y-x)\big)\,dx\,dy\,dt\Big]\notag \\
   &=: \sum_{i=1}^2 \hat{\mathcal{J}}_i -\mathbb{E}\Big[\int_{\Pi}\int_{D}\text{sgn}^+(u_\theta^-(t,y))\Phi(u_\theta^+(t,y))\Delta_y\big(\psi(t,y)\rho_m(y-x)\big)\,dx\,dy\,dt\Big].
\end{align}
Using the properties of mollifier and the Lebesgue point theorem, passing to the limit in $\mathcal{I}_0$ and $\mathcal{J}_0$ is routine. Thanks to similar reasoning as done for $\mathcal{G}_0$ and $\mathcal{H}_0$, we have 
\begin{align*}
&\underset{\tilde{\xi} \rightarrow 0}{\lim}\,\underset{l \rightarrow \infty}{\lim}\,\underset{\kappa \rightarrow 0}{\lim}\,\underset{n \rightarrow \infty}{\lim} \big(\mathcal{I}_0 + \mathcal{J}_0) \notag \\ & =  
\mathbb{E}\Big[\int_{\Pi}\int_{D}\beta_\xi^{\prime\prime}\big(u_\eps^+(t,x))-u_\theta(t,y)\big)|\nabla u_\eps^+(t,x)|^2\Phi^\prime(u_\eps^+(t,x))\psi(t,y)\rho_m(y-x)\,dy\,dx\,dt\Big]\notag \\
& + \mathbb{E}\Big[\int_{\Pi}\int_{D}\beta_\xi^{\prime\prime}\big(u_\eps^+(t,x)-u_\theta(t,y)\big)|\nabla u_\theta(t,y)|^2\Phi^\prime(u_\theta(t,y))\psi(t,y)\rho_m(y-x)\,dx\,dy\,dt\Big]\notag \\
&\ge \mathbb{E}\Big[\int_{\Pi}\int_{D}\beta_\xi^{\prime\prime}\big(u_\eps^+(t,x))-u_\theta(t,y)\big)\text{sgn}^+(u_\theta(t,y))|\nabla u_\eps^+(t,x)|^2\Phi^\prime(u_\eps^+(t,x))\psi(t,y)\rho_m(y-x)\,dy\,dx\,dt\Big]\notag \\
& + \mathbb{E}\Big[\int_{\Pi}\int_{D}\beta_\xi^{\prime\prime}\big(u_\eps^+(t,x)-u_\theta(t,y)\big)\text{sgn}^+(u_\theta(t,y))|\nabla u_\theta(t,y)|^2\Phi^\prime(u_\theta(t,y))\psi(t,y)\rho_m(y-x)\,dx\,dy\,dt\Big]\notag \\
&\ge 2\mathbb{E}\Big[\int_{\Pi}\int_{D}\int_{u_\theta(t,y)}^{u_\eps^+(t,x)}\int_{\mu}^{u_\theta(t,y)}\beta^{\prime\prime}(\mu-\sigma)\text{sgn}^+(\sigma)\sqrt{\Phi^\prime(\sigma)}\,d\sigma \sqrt{\Phi^\prime(\mu)}\,d\mu \notag \\ & \hspace{6cm}\times\text{div}_x\big(\nabla_y\big(\psi(t,y)\rho_m(y-x)\big)\big)\,dx\,dy\,dt\Big] =: \Hat{\mathcal{I}_0} +\Hat{\mathcal{J}_0}\,.
\end{align*}
Using Lemma \ref{lem:to-deal-with-degnerate-term}, one has
\begin{align}\label{eq:lim-xi-0-degnerate}
    \underset{\xi \rightarrow 0}{\lim}\,\int_{\mu}^{u_\theta(t,y)}\beta^{\prime\prime}(\mu-\sigma)\text{sgn}^+(\sigma)\sqrt{\Phi^\prime(\sigma)}\,d\sigma = -\text{sgn}^+\big(\mu- u_\theta(t,y)\big)\text{sgn}^+(\mu)\sqrt{\Phi^\prime(\mu)}.
\end{align}
With \eqref{eq:lim-xi-0-degnerate} in hand and an application of generalized Fatou's lemma yields
\begin{align}\label{eq:xi-0-hatJ-1}
    &\underset{\xi \rightarrow 0}{\liminf}\big(\Hat{\mathcal{I}_0} +\Hat{\mathcal{J}_0}\big)\notag \\
    & \ge 
-2\mathbb{E}\Big[\int_{\Pi}\int_{D}\int_{u_\theta(t,y)}^{u_\eps^+(t,x)}\text{sgn}^+(\mu)\text{sgn}^+\big(\mu- u_\theta(t,y)\big)\Phi^\prime(\mu)\,d\mu\, \text{div}_x\big(\nabla_y\big(\psi(t,y)\rho_m(y-x)\big)\big)\,dx\,dy\,dt\Big]\notag \\ & = - 2\mathbb{E}\Big[\int_{\Pi}\int_{D}\int_{u_\theta^+(t,y)}^{u_\eps^+(t,x)}\text{sgn}^+\big(\mu- u_\theta(t,y)\big)\Phi^\prime(\mu)\,d\mu\, \text{div}_x\big(\nabla_y\big(\psi(t,y)\rho_m(y-x)\big)\big)\,dx\,dy\,dt\Big]\notag = \Hat{\mathcal{J}}_1.\notag \\
    & \Rightarrow \underset{\xi \rightarrow 0}{\liminf}\big(\Hat{\mathcal{I}_0} +\Hat{\mathcal{J}_0}\big) - \Hat{\mathcal{J}}_1 \ge 0.
\end{align}
Similar to Lemma \ref{lem:G-11-H-10}, we have
\begin{lem}\label{lem:i1011j1011}
    \begin{align*}
        \underset{\xi \rightarrow 0}{\lim}\underset{\Tilde{\xi} \rightarrow 0}{\lim}\,\underset{l \rightarrow \infty}{\lim}\,\underset{\kappa \rightarrow 0}{\lim}\,\underset{n \rightarrow \infty}{\lim} \big(\mathcal{I}_{10}+ \mathcal{I}_{11} + \mathcal{J}_{10} + \mathcal{J}_{11}\big) \le C(\theta^{\frac{1}{2}} + \eps^{\frac{1}{2}}).
    \end{align*}
\end{lem}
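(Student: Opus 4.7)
The plan is to mirror the argument used for Lemma \ref{lem:G-11-H-10}, exploiting the fact that the structure of $\mathcal{I}_{10},\mathcal{I}_{11}$ (resp.~$\mathcal{J}_{10},\mathcal{J}_{11}$) is identical to that of $\mathcal{H}_{9},\mathcal{H}_{10}$ (resp.~$\mathcal{G}_{11},\mathcal{G}_{12}$) with the roles of the two viscous solutions swapped and the shifted mollifier $\rho_m(y-x)$ replacing $\rho_m(x-y)$. The key observation is that $\mathcal{I}_{11}$ and $\mathcal{J}_{11}$ are sign-definite, while $\mathcal{I}_{10}$ and $\mathcal{J}_{10}$ can be controlled pointwise by $\eps^{1/2}$ and $\theta^{1/2}$ respectively via Cauchy--Schwarz and the a priori estimate \eqref{inq:apriori-bound}.

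First, I would note that for every choice of the small parameters $(n,\kappa,l,\tilde{\xi},\xi)$, both $\mathcal{I}_{11}\le 0$ and $\mathcal{J}_{11}\le 0$. Indeed, each integrand contains the factor $|\nabla u_\eps|^2$ (respectively $|\nabla u_\theta^\kappa|^2$) multiplied by a non-negative combination of the form $\beta_\xi^{\prime\prime}|\beta_{\tilde\xi}^\prime|^{2}+\beta_\xi^\prime\beta_{\tilde\xi}^{\prime\prime}$, a non-negative test function $\psi\rho_m\rho_n$, and $J_l\ge 0$, while the prefactor carries a minus sign. Hence these two terms contribute non-positively to the bound, and can be dropped.

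Next, for $\mathcal{I}_{10}$, I would pass to the iterated limit in $n\to\infty$, $\kappa\to 0$, $l\to\infty$, $\tilde\xi\to 0$, $\xi\to 0$ exactly as in the derivation of $\tilde{\mathcal{G}}_{11}$, using the Lebesgue point theorem, continuity of translation in $L^p$, the dominated convergence theorem, and properties of the modified mollification \eqref{eq:mollification}. This yields
\begin{align*}
\tilde{\mathcal{I}}_{10} = -\eps\,\mathbb{E}\Big[\int_{\Pi}\int_{D}\mathrm{sgn}^+\big(u_\eps^+(t,x)-u_\theta^+(t,y)\big)\,\mathrm{sgn}^+(u_\eps(t,x))\,\nabla u_\eps(t,x)\cdot\nabla_x\rho_m(y-x)\,\psi(t,y)\,dy\,dx\,dt\Big].
\end{align*}
Applying Cauchy--Schwarz in the product $\eps^{1/2}\cdot\eps^{1/2}$ and using the a priori bound \eqref{inq:apriori-bound},
\begin{align*}
|\tilde{\mathcal{I}}_{10}|\le \eps^{1/2}\,C(\psi)\Big(\eps\,\mathbb{E}\int_{\Pi}|\nabla u_\eps|^{2}dx\,dt\Big)^{1/2}\Big(\mathbb{E}\int_{\Pi}\!\!\int_D|\nabla_x\rho_m(y-x)|^{2}dy\,dx\,dt\Big)^{1/2}\le C(\psi,m)\,\eps^{1/2}.
\end{align*}
The treatment of $\mathcal{J}_{10}$ is completely analogous, but with the viscous solution $u_\theta^\kappa$, the viscosity parameter $\theta$, and the bound $\theta\,\mathbb{E}\!\int|\nabla u_\theta|^{2}\le C$ from \eqref{inq:apriori-bound}; this produces $|\tilde{\mathcal{J}}_{10}|\le C(\psi,m)\,\theta^{1/2}$.

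Summing the four estimates, the claim follows with $C=C(\psi,m)$. There is no essential obstacle here: the only technical care is in verifying that the iterated passage to the limit in $\mathcal{I}_{10}$ and $\mathcal{J}_{10}$ does not destroy the $\eps^{1/2}$/$\theta^{1/2}$ factor, which is safeguarded by extracting the square root of $\eps$ (resp.~$\theta$) \emph{before} applying Cauchy--Schwarz, so that the remaining $\eps\,\mathbb{E}\int|\nabla u_\eps|^{2}$ stays uniformly bounded along the limit procedure.
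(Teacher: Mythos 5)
Your proof is correct and follows essentially the same route as the paper, which simply invokes the argument of Lemma \ref{lem:G-11-H-10}: drop the sign-definite quadratic-gradient terms $\mathcal{I}_{11},\mathcal{J}_{11}\le 0$, and bound the cross terms via Cauchy--Schwarz (splitting $\eps=\eps^{1/2}\cdot\eps^{1/2}$, resp.\ $\theta=\theta^{1/2}\cdot\theta^{1/2}$) together with the a priori estimate \eqref{inq:apriori-bound}. The only cosmetic slip is the structural pairing --- $\mathcal{I}_{10},\mathcal{I}_{11}$ are the role-swapped analogues of $\mathcal{G}_{11},\mathcal{G}_{12}$ and $\mathcal{J}_{10},\mathcal{J}_{11}$ of $\mathcal{H}_{9},\mathcal{H}_{10}$, not the other way around --- which has no bearing on the estimates.
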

Regarding the It\^o correction term, we have the following result.
\begin{lem}\label{lem:I6-I7}It holds that,
    \begin{align}
        \underset{l \rightarrow \infty}{\lim}\,\underset{\kappa \rightarrow 0}{\lim}\,\underset{n \rightarrow \infty}{\lim} \mathcal{I}_6 = \, &\frac{1}{2}\,\mathbb{E}\Big[\int_{\Pi}\int_{D}\Big(\beta_\xi^{\prime\prime}\big(\beta_{\Tilde{\xi}}(u_\eps(t,x))-u_\theta(t,y)\big)|\beta_{\Tilde{\xi}}^{\prime}(u_\eps(t,x))|^2 \notag \\&+ \beta_\xi^{\prime}\big(\beta_{\Tilde{\xi}}(u_\eps(t,x))-u_\theta(t,y)\big)\beta_{\Tilde{\xi}}^{\prime\prime}(u_\eps(t,x))\Big)\big|\phi(u_\eps(t,x))\big|^2\psi(t,y)\rho_m(y-x)\,dy\,dx\,dt\Big],\notag\\
        \underset{l \rightarrow \infty}{\lim}\,\underset{\kappa \rightarrow 0}{\lim}\,\underset{n \rightarrow \infty}{\lim} \mathcal{J}_6  = \,&\frac{1}{2}\mathbb{E}\Big[\int_{\Pi}\int_{D}\beta_\xi^{\prime\prime}(\beta_{\Tilde{\xi}}(u_\eps(t,x))-u_\theta(t,y))|\phi( u_\theta(t,y))|^2\psi(t,y)\rho_m(y-x)\,dx\,dy\,dt\Big]\notag. 
    \end{align}
\end{lem}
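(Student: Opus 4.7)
The plan is to mimic Lemma \ref{lem:ito-correction}, which handled the analogous It\^o correction terms $\mathcal{G}_7$ and $\mathcal{H}_5$ appearing in the first half of Kato's inequality. Both $\mathcal{I}_6$ and $\mathcal{J}_6$ are ordinary (non-stochastic) integrals in $\omega$, so I can pass to the nested limits $n \to \infty$, $\kappa \to 0$, $l \to \infty$ one at a time by dominated convergence, using standard mollifier properties together with the a priori bound \eqref{inq:apriori-bound}.

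For $\mathcal{I}_6$, every $u_\eps$-dependent factor sits at $(t,x)$ and is inert under the three limits. First I would send $n \to \infty$: since $\rho_n(t-s)$ is supported in $[-\tfrac{2}{n},0]$, the Lebesgue point theorem together with the continuity of translations in $L^p$ collapses the $s$ variable onto $t$, replacing $u_\theta^\kappa(s,y)$ by $u_\theta^\kappa(t,y)$ and $\psi(s,y)$ by $\psi(t,y)$. Next I would send $\kappa \to 0$: using $u_\theta^\kappa \to u_\theta$ in $L^2(D)$ (a consequence of the modified-convolution property recalled after \eqref{eq:mollification}) together with the fact that $J_l$ is bounded and continuous, dominated convergence yields the same integrand with $u_\theta^\kappa$ replaced by $u_\theta$. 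Finally, $l \to \infty$: since $J_l$ is an approximation of the Dirac mass at $0$, the $k$-integral collapses onto $k = u_\theta(t,y)$, producing the expression claimed.

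The treatment of $\mathcal{J}_6$ is symmetric with the roles of $u_\theta$ and $u_\eps$ swapped, the only extra care being in the $\kappa \to 0$ stage, where I would invoke Young's inequality for convolution together with the Lipschitzness of $\phi$ (assumption \ref{A4}) to justify $\phi(u_\theta) \ast \rho_\kappa \to \phi(u_\theta)$ in $L^2$. The $n$- and $l$-limits are entirely analogous, with the final $l$-limit collapsing $k$ onto $\beta_{\tilde\xi}(u_\eps(t,x))$.

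The only technical concern is uniform integrability in the presence of the potentially singular factor $\beta_\xi''$. However, since $\xi$ is held fixed throughout this lemma, \eqref{inq:for-beta-xi} furnishes $\beta_\xi'' \le \tfrac{C}{\xi}\mathbf{1}_{\{|r|\le\xi\}}$, which is a uniform bound. Combined with the Lipschitzness of $\phi$ and the a priori estimate \eqref{inq:apriori-bound} (specifically $\sup_t\mathbb{E}[\|u_\eps(t)\|_{L^2(D)}^2]\le C$ and its analogue for $u_\theta$), this supplies dominating integrable majorants at every step, so I expect no essentially new ideas beyond those of Lemma \ref{lem:ito-correction} will be required.
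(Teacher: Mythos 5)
Your proposal is correct and follows essentially the same route as the paper: the paper handles $\mathcal{I}_6$ and $\mathcal{J}_6$ (just like the analogous terms $\mathcal{G}_7$, $\mathcal{H}_5$ in Lemma \ref{lem:ito-correction}) by exactly the ``conventional methods'' you describe --- the Lebesgue point theorem for the $n\to\infty$ collapse, mollification properties and Young's inequality for convolution for $\kappa\to 0$, the Dirac-approximation property of $J_l$ for $l\to\infty$, all justified via dominated convergence, citing \cite{Bauzet-2012,Majee-2014,Bauzet-2015,Majee-2019}. Your observation that the fixed-$\xi$ bound $\beta_\xi''\le \tfrac{C}{\xi}\mathbf{1}_{\{|r|\le\xi\}}$ from \eqref{inq:for-beta-xi}, together with assumption \ref{A4} and the a priori bound \eqref{inq:apriori-bound}, furnishes the dominating majorants is precisely what makes these routine limit passages legitimate.
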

%%%%%%%%%%%%%%%%%%%%%%%%%%%%%%%%%%%%
\iffalse
Let us consider the stochastic resulting from Brownian noise. In this regard, we define
\begin{align*}
\Tilde{\varsigma}(s) := \int_{D}\beta_\xi^{\prime}(k-u_\theta^\kappa(s, y))\phi( u_\theta^\kappa(s, y))\psi(t,y)\rho_m(y-x)\rho_n(t-s)\,dy.
\end{align*}
Note that, $\varsigma(t) = J_l(k- \beta_{\Tilde{\xi}}(u_\eps(t,x)))$ is predictable, thus $\mathcal{J}_7 =0$ and 
\fi 
%%%%%%%%%%%%%%%%%%%%%
Regarding the stochastic terms due to Brownian noise, we have $\mathcal{J}_7 =0$  and $\mathcal{I}_7$ can be re-written as
\begin{align} % \label{eq:I7}
    &\mathcal{I}_7 =  \mathbb{E}\Big[\int_{\Pi^2}\int_{\R}\beta_\xi^{\prime}\big(\beta_{\Tilde{\xi}}(u_\eps(t,x))-k\big)\beta_{\Tilde{\xi}}^\prime(u_\eps(t,x))\phi(u_\eps(t,x))\psi(s,y)\rho_n(t-s)\rho_m(y-x)\notag \\&\hspace{2cm}\times\Big(J_l\big(u_\theta^\kappa(s,y) - k\big)- J_l\big(u_\theta^\kappa(s-\frac{2}{n},y)-k)\big)\Big)\,dk\,dy\,ds\,dx\,dW(t)\Big]\,. \notag 
\end{align}
Let us define
\begin{align}
    \Hat{\mathcal{K}}[\beta^\prime, \rho_{m,n}](s,y,k) := \int_{\Pi}\beta_\xi^{\prime}\big(\beta_{\Tilde{\xi}}(u_\eps(t,x))-k\big)\beta_{\Tilde{\xi}}^\prime(u_\eps(t,x))\phi(u_\eps(t,x))\psi(s,y)\rho_n(t-s)\rho_m(y-x)\,dx\,dW(t)\,.\notag
\end{align}
Observe that, similar to \eqref{lem:for-ito-term}, one has
\begin{align}\label{lem:for-ito-2ndpart}
    &\underset{0 \le s \le T}{\sup}\mathbb{E}\Big[\|\Hat{\mathcal{K}}[\beta^{\prime\prime}, \rho_{m,n}](t, \cdot, \cdot)\|_{{L^\infty}(D \times R)}^2\Big] \le \frac{C(\psi)n^{\frac{2(p-1)}{p}}m^\frac{2}{p}}{\xi^a\Tilde{\xi}^b},\notag \\
    &\underset{0 \le s \le T}{\sup}\mathbb{E}\Big[\|\Hat{\mathcal{K}}[\beta^{\prime\prime\prime}, \rho_{m,n}](t, \cdot, \cdot)\|_{{L^\infty}(D \times R)}^2\Big] \le \frac{C(\psi)n^{\frac{2(p-1)}{p}}m^\frac{2}{p}}{\xi^{\hat{a}}\Tilde{\xi}^{\hat{b}}},
\end{align}
for some $a, b, \hat{a}, \hat{b} \ge 0$ depending upon $d$. 
Using with the  It\^o-L\'evy formula on $J_l\big(u_\theta^\kappa(s,y) - k\big)$ and integration by parts, we obtain
\begin{align}
   \mathcal{I}_7 =  &-\mathbb{E}\Big[ \int_{\Pi} \int_{\mathbb{R}} \Hat{\mathcal{K}}[\beta'', \rho_{m,n}](s,y,k)\int_{s-\frac{2}{n}}^{s}J_l(u_\theta^\kappa(r,y))-k)A_\theta(r,y)\,dr\,dk\,ds\,dy \Big]\notag \\
        &-\,\mathbb{E}\Big[ \int_{\Pi} \int_{\mathbb{R}}\int_D\int_{s-\frac{2}{n}}^s\beta_\xi^{\prime\prime}\big(\beta_{\Tilde{\xi}}(u_\eps(t,x))-k\big)\beta_{\Tilde{\xi}}^\prime(u_\eps(t,x))(\phi( u_\theta(r,y))\ast \rho_\kappa)\phi( u_\eps(t, x))\psi(r,y)\notag\\ &\hspace{4cm}\times\rho_m(y-x)\rho_n(t-r) J_l\big(u_\theta^\kappa(r,y)-k)\big)\,dt\,dx\,dk\,dr\,dy\Big]\notag \\
        &+ \frac{1}{2}\mathbb{E}\Big[\int_{\Pi}\int_{\R}\int_{D}\hat{\mathcal{K}}[\beta^{\prime\prime\prime}, \rho_{m,n}](s,y,k) \int_{s-\frac{2}{n}}^s J_l\big(u_\theta^\kappa(r,y)-k)\big)|\phi( u_\theta(r, y))\ast\rho_\kappa|^2\,dr\,dy\,dk\,ds\,dx\Big]\notag \\
        &+ \mathbb{E}\Big[\int_{\Pi}\int_{\R}\int_{D}\Hat{\mathcal{K}}[\beta^{\prime\prime\prime}, \rho_{m,n}](s,y,k)\int_{s-\frac{2}{n}}^{s}\int_{E}\int_0^1 (1-\lambda)J_l\big(u_\theta^\kappa(r,y) + \lambda\nu( u_\theta(r,y);z)\ast \rho_\kappa -k \big)\notag \\ & \hspace{3cm}\times\big|\nu( u_\theta(r,y);z) \ast \rho_\kappa\big|^2\,d\lambda\,m(dz)\,dr\,\,dy\,dk\,ds\,dx \Big]\notag 
        := \sum_{i=1}^4\mathcal{I}_{7,i} \,.
\end{align}
Similar to the analysis of $\mathcal{H}_6$ and using \eqref{lem:for-ito-2ndpart}, we get
\begin{align}
   & \mathcal{I}_{7,1},~ \mathcal{I}_{7,3},~\mathcal{I}_{7,4} \rightarrow 0 \quad \text{as} \quad n \rightarrow \infty\,,\notag \\
   & \underset{l \rightarrow \infty}{\lim}\,\underset{k \rightarrow 0}{\lim}\,\underset{n \rightarrow \infty}{\lim} \mathcal{I}_{7,2} = -\,\mathbb{E}\Big[ \int_{\Pi} \int_{\mathbb{R}}\int_D\beta_\xi^{\prime\prime}\big(\beta_{\Tilde{\xi}}(u_\eps(t,x))-u_\theta(t,y)\big)\beta_{\Tilde{\xi}}^\prime(u_\eps(t,x))\phi( u_\theta(t,y))\notag \\& \hspace{5cm}\times \phi( u_\eps(t, x))\psi(t,y)\rho_m(y-x) \,dt\,dx\,dy\Big]\,.\notag 
\end{align}
In view of Lemma \ref{lem:I6-I7} and above estimations, we get
\begin{align}
   & \underset{l \rightarrow \infty}{\lim}\,\underset{k \rightarrow 0}{\lim}\,\underset{n \rightarrow \infty}{\lim}\big(\mathcal{I}_6 + \mathcal{I}_7 + \mathcal{J}_6 + \mathcal{J}_7 \big)\notag \\ & \le C\Tilde{\xi} +  \frac{1}{2}\,\mathbb{E}\Big[\int_{\Pi}\int_{D}\beta_\xi^{\prime\prime}\big(\beta_{\Tilde{\xi}}(u_\eps(t,x))-u_\theta(t,y)\big)|\beta_{\Tilde{\xi}}^{\prime}(u_\eps(t,x))\phi(u_\eps(t,x)) -\phi(u_\theta(t,y))|^2\notag \\ & \hspace{6cm}\times\psi(t,y)\rho_m(y-x)\,dy\,dx\,dt\Big]\notag \\ &\underset{\Tilde{\xi} \rightarrow 0}{\longrightarrow}  \frac{1}{2}\,\mathbb{E}\Big[\int_{\Pi}\int_{D}\beta_\xi^{\prime\prime}\big(u_\eps^+(t,x))-u_\theta(t,y)\big)|\phi(u_\eps^+) -\phi(u_\theta)|^2\psi(t,y)\rho_m(y-x)\,dy\,dx\,dt\Big]
   \le C\xi\,. \notag
\end{align}
Thus, we have the following result.
\begin{lem}\label{lem:IJ67}It holds that,
    \begin{align} 
       \underset{\xi \rightarrow 0}{\lim}\,\underset{ \Tilde{\xi} \rightarrow 0}{\lim}\,\underset{l \rightarrow \infty}{\lim}\,\underset{k \rightarrow 0}{\lim}\,\underset{n \rightarrow \infty}{\lim}\big(\mathcal{I}_6 + \mathcal{I}_7 + \mathcal{J}_6 + \mathcal{J}_7 \big) \le 0. \notag 
    \end{align}
\end{lem}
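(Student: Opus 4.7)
The plan is to mirror Lemma~\ref{lem:G7-H6} for the first half, with the roles of $u_\eps$ and $u_\theta$ interchanged. First, I would observe that $\mathcal{J}_7 = 0$: since $\supp(\rho_n) \subset [-2/n,0]$ forces $t \le s$, the integrand of the $dW(s)$-integral in $\mathcal{J}_7$, once the $t,x,k$ integrations have been performed, is $\mathcal{F}_s$-predictable, so the expectation of the martingale vanishes. The genuine work concerns $\mathcal{I}_7$, which is anticipative because $J_l(u_\theta^\kappa(s,y) - k)$, paired with $dW(t)$, depends on $u_\theta$ at the later time $s$.

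For $\mathcal{I}_7$ I would use the identity
\[
J_l(u_\theta^\kappa(s,y) - k) = \bigl\{J_l(u_\theta^\kappa(s,y) - k) - J_l(u_\theta^\kappa(s - 2/n, y) - k)\bigr\} + J_l(u_\theta^\kappa(s - 2/n, y) - k).
\]
The second summand is $\mathcal{F}_{s-2/n}$-measurable, hence $\mathcal{F}_t$-predictable on the support of $\rho_n(t-s)$, so its contribution to $\mathcal{I}_7$ has zero expectation. For the first summand I would apply the It\^o--L\'evy formula to $r \mapsto J_l(u_\theta^\kappa(r,y) - k)$ on $[s - 2/n, s]$ and perform integration by parts in $k$ to transfer derivatives onto the auxiliary process $\hat{\mathcal{K}}[\beta', \rho_{m,n}]$, producing the four-term decomposition $\mathcal{I}_7 = \sum_{i=1}^{4} \mathcal{I}_{7,i}$ stated in the text. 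The pieces $\mathcal{I}_{7,1}, \mathcal{I}_{7,3}, \mathcal{I}_{7,4}$ each couple the factor $2/n$ coming from the inner integration window with $\hat{\mathcal{K}}[\beta'', \cdot]$ or $\hat{\mathcal{K}}[\beta''', \cdot]$; choosing $p$ large enough, the sharp sup-norm bounds \eqref{lem:for-ito-2ndpart} make these three terms $O(n^{-1/p})$, and they vanish as $n \to \infty$ exactly as the analogous $\mathcal{H}_{6,i}$ ($i \ne 2$) do in Lemma~\ref{lem:G7-H6}. The surviving piece $\mathcal{I}_{7,2}$ passes under $n \to \infty$, $\kappa \to 0$, $l \to \infty$ to the cross-pairing between $\phi(u_\eps)$ and $\phi(u_\theta)$ displayed in the text.

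Combining this cross-pairing with the limits of $\mathcal{I}_6$ and $\mathcal{J}_6$ supplied by Lemma~\ref{lem:I6-I7} produces the completed square
\[
\tfrac{1}{2}\,\mathbb{E}\!\int_{\Pi}\!\int_D \beta_\xi''\bigl(\beta_{\tilde\xi}(u_\eps) - u_\theta\bigr)\, \bigl|\beta_{\tilde\xi}'(u_\eps)\phi(u_\eps) - \phi(u_\theta)\bigr|^2 \psi(t,y)\,\rho_m(y-x)\,dy\,dx\,dt,
\]
plus a residual $\beta_\xi'\, \beta_{\tilde\xi}''(u_\eps)\,\phi(u_\eps)^2$ contribution from $\mathcal{I}_6$ of size $O(\tilde\xi)$ (controlled by $\beta_{\tilde\xi}''(r) \le C\tilde\xi^{-1} {\tt 1}_{\{|r| \le \tilde\xi\}}$ together with assumption~\ref{A4}, which yields $|\phi(u_\eps)|^2 \le C\tilde\xi^2$ on the support of $\beta_{\tilde\xi}''$). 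Sending $\tilde\xi \to 0$ turns the square into $\beta_\xi''(u_\eps^+ - u_\theta)\,|\phi(u_\eps^+) - \phi(u_\theta)|^2$, and the support property $\beta_\xi''(r) \le C\xi^{-1} {\tt 1}_{\{|r| \le \xi\}}$ from \eqref{inq:for-beta-xi} combined with the Lipschitz bound $|\phi(u_\eps^+) - \phi(u_\theta)|^2 \le C\xi^2$ on that support gives $\le C\xi$, which vanishes with $\xi$. The principal obstacle is the anticipativity of $\mathcal{I}_7$; the add-subtract device at time $s - 2/n$, the It\^o--L\'evy expansion on $[s - 2/n, s]$, and the pointwise estimates \eqref{lem:for-ito-2ndpart} for $\hat{\mathcal{K}}$ are what make this step work, after which the completed-square identity and the support bound on $\beta_\xi''$ finish the argument mechanically.
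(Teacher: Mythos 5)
Your proposal is correct and takes essentially the same route as the paper's own proof: the predictability argument giving $\mathcal{J}_7=0$, the add--subtract of $J_l\big(u_\theta^\kappa(s-\tfrac{2}{n},y)-k\big)$ to de-anticipate $\mathcal{I}_7$, the It\^o--L\'evy expansion on $[s-\tfrac{2}{n},s]$ with integration by parts in $k$ producing $\sum_{i=1}^{4}\mathcal{I}_{7,i}$, the sup-norm bounds \eqref{lem:for-ito-2ndpart} on $\hat{\mathcal{K}}$ killing $\mathcal{I}_{7,1},\mathcal{I}_{7,3},\mathcal{I}_{7,4}$ as $n\to\infty$, and the completed square with the residual $O(\tilde{\xi})$ term followed by the bound $C\xi$ from \eqref{inq:for-beta-xi} and \ref{A4} are exactly the paper's steps. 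No gaps.
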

We move on to concentrate on the additional term due to jump noise. Similar to Lemma \ref{lem:correction-term-due-levy}, one has
\begin{lem}\label{lem:i8j8}
    \begin{align}
        &\underset{\tilde{\xi} \rightarrow 0}{\lim}\,\underset{l \rightarrow \infty}{\lim}\,\underset{k \rightarrow 0}{\lim}\,\underset{n \rightarrow \infty}{\lim}\big(\mathcal{I}_8 + \mathcal{J}_8\big)\notag \\ & = \,\mathbb{E}\Big[\int_{\Pi}\int_{D}\int_{E}\Big(\beta_\xi\big(u_\eps^+(t,x) + \nu( u_\eps(t,x); z)-u_\theta(t,y)\big) - \beta_{\xi}\big(u_\eps^+(t,x)-u_\theta(t,y)\big)\notag\\  & \hspace{2cm}- \nu( u_\eps^+(t,x); z)\beta_\xi^\prime\big(u_\eps^+(t,x) -u_\theta(t,y)\big)\psi(t,y)\rho_m(y-x)\,m(dz)\,dy\,dx\,dt\Big]\notag \\
        & +\mathbb{E}\Big[\int_{\Pi}\int_{D}\int_{E}\Big(\beta_\xi\big(u_\eps^+(t,x)-u_\theta(t, y) - \nu(u_\theta(t, y);z)\big) - \beta_\xi(u_\eps^+(t,x)-u_\theta(t, y))\notag \\ & \hspace{2cm} +\nu(u_\theta(t, y);z)\beta^\prime(u_\eps^+(t,x)- u_\theta(t, y))\Big)\psi(t,y)\rho_m(y-x)\,m(dz)\,dx\,dy\,dt\Big]\notag \,.
    \end{align}
\end{lem}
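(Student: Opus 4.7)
The plan is to pass to the iterated limits in the order prescribed by the statement ($n\to\infty$, $\kappa\to 0$, $l\to\infty$, $\tilde{\xi}\to 0$), mirroring the argument used for Lemma \ref{lem:correction-term-due-levy}. A decisive simplification here is that both $\mathcal{I}_8$ and $\mathcal{J}_8$ are \emph{compensator} expressions integrated against the deterministic intensity measure $m(dz)$, not against the martingale measure $\widetilde{N}$; consequently no It\^o--L\'evy duality or stochastic product rule is required, and the entire passage reduces to verifying dominated convergence at a fixed $\xi>0$.

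First, I would send $n \to \infty$ to eliminate the time convolution $\rho_n(t-s)$ via the Lebesgue point theorem, followed by $\kappa \to 0$ to replace $u_\theta^\kappa$ and $\nu(u_\theta;z)\ast\rho_\kappa$ by $u_\theta$ and $\nu(u_\theta;z)$ respectively; here the uniform integrability is provided by $|\nu(u;z)| \le \lambda^\star g(z)|u|$, the \emph{a priori} bound \eqref{inq:apriori-bound}, and $g \in L^2(E,m)$. Then $l \to \infty$ removes the mollifier $J_l$ and substitutes $k = u_\theta^\kappa(s,y)$ in $\mathcal{I}_8$ (which after the previous limits becomes $u_\theta(t,y)$) and $k = \beta_{\tilde{\xi}}(u_\eps(t,x))$ in $\mathcal{J}_8$. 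Finally $\tilde{\xi} \to 0$ sends $\beta_{\tilde{\xi}}(u_\eps) \to u_\eps^+$ and $\beta_{\tilde{\xi}}'(u_\eps) \to \mathrm{sgn}^+(u_\eps)$; the elementary identity
\[
\nu(u;z)\,\mathrm{sgn}^+(u) = \nu(u^+;z),
\]
which follows from $\nu(0;z)=0$ and monotonicity in assumption \ref{A6}, then converts $\nu(u_\eps;z)\beta_{\tilde{\xi}}'(u_\eps)$ into $\nu(u_\eps^+;z)$ in the $\mathcal{I}_8$-limit, and the $\mathcal{J}_8$-limit falls out directly since no $\beta_{\tilde{\xi}}$ is applied to $u_\theta$. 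This yields exactly the form claimed in the statement.

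The main technical point to verify (rather than a genuine obstacle) is a uniform $L^1$-domination of the compensator integrand across all small parameters. Using the Taylor-type bound
\[
|\beta_\xi(a+h) - \beta_\xi(a) - h\, \beta_\xi'(a)| \le \tfrac12 \|\beta_\xi''\|_\infty \, h^2,
\]
the integrand is dominated by $C_\xi |\nu(\cdot;z)|^2 \le C_\xi(\lambda^\star)^2 g(z)^2 \|u\|_{L^2(D)}^2$, which is $m$-integrable thanks to assumption \ref{A5}--\ref{A6}; the factors $\psi$ and $\rho_m$ provide spatial compact support, and predictability plays no role since no stochastic integral is present. The interaction between the $\kappa$-limit and the subsequent limits in $\mathcal{J}_8$, where $u_\theta^\kappa$ and $\nu(u_\theta;z)\ast\rho_\kappa$ both depend on $\kappa$, is handled by Lipschitz continuity of $\beta_\xi$ and $L^2$-convergence of these mollifications. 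No new ideas are needed beyond those already employed in Section \ref{sec:The first half of Kato's inequality}, and the conclusion follows.
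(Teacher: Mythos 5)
Your overall strategy is the same as the paper's: the paper disposes of this lemma in one line ("Similar to Lemma \ref{lem:correction-term-due-levy}", which itself is justified only by citing \cite{Majee-2015, Majee-2019} and properties of the convolution), i.e.\ precisely the routine sequential limits you describe. Your two structural observations are correct and are exactly the omitted content: $\mathcal{I}_8$ and $\mathcal{J}_8$ are compensator integrals against the deterministic measure $m(dz)$, so predictability and the It\^o--L\'evy product-rule machinery (which the paper reserves for $\mathcal{I}_9$, while $\mathcal{J}_9$ vanishes by adaptedness) play no role; and the identity $\nu(u;z)\,\text{sgn}^+(u)=\nu(u^+;z)$ (which needs only $\nu(0;z)=0$, not monotonicity) is what converts $\nu(u_\eps;z)\beta_{\tilde{\xi}}'(u_\eps)$ into the factor $\nu(u_\eps^+;z)$ appearing in the statement.

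The gap is in your domination step. The Taylor bound applies verbatim to $\mathcal{J}_8$, whose integrand is an exact Taylor remainder, but the integrand of $\mathcal{I}_8$ is
\begin{align*}
\beta_\xi(A+\nu)-\beta_\xi(A)-\nu\,\beta_\xi'(A)\,\beta_{\tilde{\xi}}'(u_\eps),\qquad A=\beta_{\tilde{\xi}}(u_\eps)-k,\quad \nu=\nu(u_\eps;z),
\end{align*}
i.e.\ a Taylor remainder \emph{plus} the residual $\nu\,\beta_\xi'(A)\,\big(1-\beta_{\tilde{\xi}}'(u_\eps)\big)$, supported on $\{u_\eps\le\tilde{\xi}\}$ and only linear in $|\nu|$. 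Concretely, for $u_\eps<0$ one has $\beta_{\tilde{\xi}}(u_\eps)=0$ and $\beta_{\tilde{\xi}}'(u_\eps)=0$, so the integrand reduces to $\beta_\xi(\nu-k)-\beta_\xi(-k)$, which equals $\nu$ exactly whenever $-k>\xi$ and $\nu-k>\xi$; this is of size $|\nu|\le\lambda^\star g(z)|u_\eps|$, and it exceeds $C_\xi|\nu|^2$ throughout the small-jump regime $|\nu|<\xi/C$ --- precisely the regime where the L\'evy measure can be infinite. Since \ref{A6} only gives $g\in L^2(E,m)$, not $g\in L^1(E,m)$, a first-order-in-$\nu$ integrand need not be $m$-integrable, so dominated convergence cannot be invoked with the dominating function you propose, and the passage $\tilde{\xi}\to 0$ (indeed even the absolute convergence of the $m(dz)$-integral in $\mathcal{I}_8$) is not justified as written. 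A repair has to exploit the sign structure you leave unused: by monotonicity of $\nu$ and $\nu(0;z)=0$ the residual is $\le 0$ on $\{u_\eps<0\}$ (so this part, which also appears unchanged in the limit expression, can be treated by Fatou/monotonicity arguments on both sides), while on $\{0<u_\eps\le\tilde{\xi}\}$ it is $\ge 0$ and bounded by $\lambda^\star\tilde{\xi}\,g(z)$, where one can further split $E$ into $\{g>\delta\}$ --- of finite $m$-measure, since $m(\{g>\delta\})\le\delta^{-2}\|g\|_{L^2(E,m)}^2$ --- and its complement. Without some argument of this kind, the quadratic-domination step fails for $\mathcal{I}_8$.
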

Invoking similar lines of argument as done for $\mathcal{I}_7$ and $\mathcal{H}_8$, we have
\begin{align}\label{eq:I9-aftersendinglimit}
 \underset{\Tilde{\xi} \rightarrow 0}{\lim}\,\underset{l \rightarrow \infty}{\lim}\underset{\kappa \rightarrow 0}{\lim}\,\underset{n \rightarrow \infty}{\lim}\, \mathcal{I}_9 &\le \mathbb{E}\Big[\int_{\Pi}\int_{D}\int_{E}\Big(\beta_\xi\big(u_\eps^+(t,x) + \nu( u_\eps^+(t,x); z) -u_\theta(t,y) - \nu(u_\theta(t,y);z)\big) \notag \\ &- \beta_\xi\big(u_\eps^+(t,x) + \nu( u_\eps(t,x); z) -u_\theta(t,y)\big) + \beta_{\xi}\big(u_\eps^+(t,x) -u_\theta(t,y)\big)\notag \\ & - \beta_{\xi}\big(u_\eps^+(t,x) -u_\theta(t,y)-\nu(u_\theta(t,y);z)\big)\Big)\psi(t,y)\rho_m(y-x) \,m(dz)\,dx\,dy\,dt\Big]\,.
\end{align}
Note that $\mathcal{J}_8 = 0$. Thus, combining \eqref{eq:I9-aftersendinglimit} with Lemma \ref{lem:i8j8}, we get
\begin{align}
    &\underset{\Tilde{\xi} \rightarrow 0}{\lim}\,\underset{l \rightarrow \infty}{\lim}\,\underset{\kappa \rightarrow 0}{\lim}\,\underset{n \rightarrow \infty}{\lim}\, \big(\mathcal{I}_8 + \mathcal{I}_{9} + \mathcal{J}_8 + \mathcal{J}_9\big)\notag \\ 
 %& =   \mathbb{E}\Big[\int_{\Pi}\int_{D}\int_{E}\Big(\beta_\xi\big(u_\eps^+(t,x) + \nu( u_\eps^+(t,x);z)-u_\theta(t,y) - \nu(u_\theta(t,y);z)\big) - \beta_\xi\big(u_\eps^+(t,x) - u_\theta(t,y)\big)\notag \\ &  + \beta_{\xi}^{\prime}\big(u_\eps^+(t,x)- u_\theta(t,y)\big)\big(\nu(u_\eps^+(t,x);z) - \nu(u_\theta(t,y);z)\big)\Big)\psi(t,y)\rho_m(y-x)\,m(dz)\,dx\,dy\,dt\Big]\notag \\
 & \le  \mathbb{E}\Big[\int_{\Pi}\int_{D}\int_{E}(1-\lambda)p^2\beta_{\xi}^{\prime\prime}(q + \lambda p) \psi(t,y)\rho_m(y-x)\,m(dz)\,dx\,dy\,dt\Big]\notag 
\end{align}
where $p = \nu(u_\eps^+(t,x);z) - \nu(u_\theta(t,y);z)$ and $q=u_\eps^+(t,x) - u_\theta(t,y)$. By using similar lines of argument as done for Lemma \ref{lem:G9-H8}, we get the following result.
\begin{lem}\label{lem:i89j89}It holds that,
    \begin{align*}
         &\underset{\xi \rightarrow 0}{\lim}\,\underset{\Tilde{\xi} \rightarrow 0}{\lim}\,\underset{l \rightarrow \infty}{\lim}\,\underset{\kappa \rightarrow 0}{\lim}\,\underset{n \rightarrow \infty}{\lim}\, \big(\mathcal{I}_8 + \mathcal{I}_{9} + \mathcal{J}_8 + \mathcal{J}_9\big) \le 0.
    \end{align*}
\end{lem}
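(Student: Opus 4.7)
The plan is to follow exactly the same template as the proof of Lemma \ref{lem:G9-H8}, since the roles of $u_\theta^+$ and $u_\eps$ there are played here by $u_\eps^+$ and $u_\theta$ respectively, and the only structural property of either pair actually used is the monotonicity of $\nu$ assumed in \ref{A6}. The starting point is the bound produced in the discussion just before the lemma statement: $\mathcal{J}_9 = 0$ holds because the compensated Poisson integrand is orthogonal to the predictable factor $J_l(k-\beta_{\Tilde{\xi}}(u_\eps(t,x)))$, so combining Lemma \ref{lem:i8j8} with \eqref{eq:I9-aftersendinglimit} the quantity under study is already controlled by
\[
\mathbb{E}\Big[\int_{\Pi}\int_{D}\int_{E}\int_0^1 (1-\lambda)\,p^2\,\beta_{\xi}^{\prime\prime}(q+\lambda p)\,d\lambda\,\psi(t,y)\rho_m(y-x)\,m(dz)\,dx\,dy\,dt\Big],
\]
with $p = \nu(u_\eps^+(t,x);z) - \nu(u_\theta(t,y);z)$ and $q = u_\eps^+(t,x) - u_\theta(t,y)$. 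It therefore suffices to show that this expression vanishes as $\xi \to 0$.

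Next I would estimate $p^2\beta_\xi''(q+\lambda p)$ pointwise, splitting into cases on the sign of $q$. By \ref{A6}, $|p| \le \lambda^\star |q| g(z)$. When $q \ge 0$, the monotonicity of $\nu$ forces $p \ge 0$, so $0 \le q \le q + \lambda p$; hence $p^2 \le (\lambda^\star g(z))^2 (q+\lambda p)^2$, and the elementary inequality $r^2\beta_\xi''(r) \le C\xi$ (immediate from \eqref{inq:for-beta-xi}) yields $p^2 \beta_\xi''(q+\lambda p) \le C\xi\, g^2(z)$. When $q < 0$, the same monotonicity forces $p \le 0$, whence $q + \lambda p \le q < 0$; since $\beta_\xi' \equiv 0$ on $(-\infty, 0]$ one has $\beta_\xi''(q+\lambda p) = 0$. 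In either case $p^2\beta_\xi''(q+\lambda p) \le C\xi\, g^2(z)$.

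Integrating this pointwise bound against $\psi(t,y)\rho_m(y-x)\,m(dz)\,dx\,dy\,dt$ and using $g \in L^2(E,m)$ together with the compact support of $\psi\rho_m$, I would arrive at an upper bound of order $C(\psi,m)\xi$, which vanishes as $\xi \to 0$. This would conclude the proof.

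I do not anticipate any serious obstacle here. The only point that demands care — and the only place where \ref{A6} cannot be relaxed from monotone-plus-Lipschitz to merely Lipschitz — is the case $q < 0$ above: without monotonicity of $\nu$, the perturbation $\lambda p$ could be positive and push $q + \lambda p$ into the transition band $[-\xi,\xi]$ on which $\beta_\xi''$ is of order $\xi^{-1}$, destroying the $O(\xi)$ estimate. This is precisely the technical obstruction flagged in the remark following assumptions \ref{A1}--\ref{A6}, and it is the reason the symmetric argument of Lemma \ref{lem:G9-H8} transfers verbatim to the present asymmetric setup.
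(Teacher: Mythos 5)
Your proposal is correct and follows essentially the same route as the paper: combine Lemma \ref{lem:i8j8} with \eqref{eq:I9-aftersendinglimit} (and the vanishing of the anticipation-free jump martingale term, which the paper misprints as $\mathcal{J}_8=0$ but clearly means $\mathcal{J}_9=0$) to reduce everything to the Taylor-remainder expression $\mathbb{E}\big[\int(1-\lambda)p^2\beta_\xi''(q+\lambda p)\,\cdots\big]$, and then run the sign-of-$q$ case analysis from Lemma \ref{lem:G9-H8} using \ref{A6} to get the $O(\xi)$ bound. Your closing remark on where monotonicity of $\nu$ is indispensable matches the paper's own justification for that assumption.
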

Combining Lemmas \ref{lem:I-1-J-1}-\ref{lem:i1011j1011}, \ref{lem:IJ67} and \ref{lem:i89j89} along with \eqref{eq:hatJ-1-2} and \eqref{eq:xi-0-hatJ-1} and passing to the limit with respect to $\eps$ and $\theta$ in the sense of Young measure in the resulting inequality, we arrive at the following:
\begin{align}
  0 \le &\,\mathbb{E}\Big[\int_{\Pi}\int_{D}\int_0^1\int_0^1\big({\tt v}^+(t,x, \gamma)-{\tt u}^+(t, y, \alpha)\big)^+\partial_t\psi(t,y)\rho_m(y-y)\,d\alpha\,d\gamma\,dx\,dt\Big]\notag \\
    -&\,\mathbb{E}\Big[\int_{\Pi}\int_{D}\int_0^1\int_0^1F^+({\tt v}^+(t,x, \gamma), {\tt u}^+(t, y, \alpha) )\nabla\psi(t,y)\rho_m(y-x)\,d\alpha\,d\gamma\,dx\,dt\Big] \notag \\
+&\,\mathbb{E}\Big[\int_{\Pi}\int_{D}\int_0^1\int_0^1\Phi^+({\tt v}^+(t,x, \gamma), {\tt u}^+(t, y, \alpha))\Delta\psi(t,y)\rho_m(y-x)\,d\alpha\,d\gamma\,dx\,dt\Big]  \notag \\
+& \int_{D^2}\big(u_0^+(x)-\hat{u}_0^+(y) \big)^+\rho_m(y-x)\psi(0,y)\,dx\,dy  + \widetilde{\mathcal{F}}_{{\tt u}}(\psi\bar{\rho}_m),\label{inq:eps-theta-goes-to-0}
\end{align}
where
\begin{align*}
    \widetilde{\mathcal{F}}_{{\tt u}}(\psi\bar{\rho}_m) : = &- \mathbb{E}\Big[\int_{\Pi}\int_{D}\int_0^1\text{sgn}^+({\tt u}^-(t,y,\alpha))\big \{{\tt u}(t,x,\alpha)\partial_t\psi(t,y)\rho_m(y-x)\notag \\ & - F({\tt u}(t,y, \alpha)) \nabla_y\big(\psi(t,y)\rho_m(y-x)\big) + \Phi({\tt u}(t,y, \alpha))\Delta_y\big(\psi(t,y)\rho_m(y-x)\big) \big \}\,d\alpha\,dx\,dt\,dy\Big] \notag \\ & -\int_{D}\text{sgn}^+((u_0(y))^-)u_0(y)\psi(0,y)\bar{\rho}_m(y)\,dy\,.
\end{align*}
Similar to  \eqref{inq:f-v-psi-non-negative}, we infer that $\widetilde{\mathcal{F}}_{\tt u}$ is a non-negative and linear operator on $\mathcal{D}^+([0,T] \times \R^d)$ and $\underset{m \rightarrow \infty}{\lim}\,\widetilde{\mathcal{F}}_{\tt u}(\psi\bar{\rho}_m)$ exists. Thus, upon passing to the limit as $m \rightarrow \infty$ in \eqref{inq:eps-theta-goes-to-0} gives
\begin{align}
0 \le &\,\mathbb{E}\Big[\int_{\Pi}\int_0^1\int_0^1\big({\tt v}^+(t,x, \gamma)-{\tt u}^+(t, x, \alpha)\big)^+\partial_t\psi(t,x)\,d\alpha\,d\gamma\,dx\,dt\Big]\notag \\
    -&\,\mathbb{E}\Big[\int_{\Pi}\int_0^1\int_0^1F^+({\tt v}^+(t,x, \gamma), {\tt u}^+(t, x, \alpha) )\nabla\psi(t,x)\,d\alpha\,d\gamma\,dx\,dt\Big] + \int_{D}\big(u_0^+(x)-\hat{u}_0^+(x) \big)^+\psi(0,x)\,dx \notag \\
+&\,\mathbb{E}\Big[\int_{\Pi}\int_0^1\int_0^1\Phi^+({\tt v}^+(t,x, \gamma), {\tt u}^+(t, x, \alpha))\Delta\psi(t,x)\,d\alpha\,d\gamma\,dx\,dt\Big] + \underset{m \rightarrow \infty}{\lim}\,\widetilde{\mathcal{F}}_{{\tt u}}(\psi\bar{\rho}_m). \label{inq:2nd-half-kato-inequality-01}
\end{align}
%%%%%%%%%%%%%%%%%%%%%%%%
\iffalse
Observe that $-{\tt v}$ (resp. $-{\tt u}$) is a measure-valued entropy solution to 
\begin{equation}\label{eq:-negative-u}
        \displaystyle dv - \text{div}(\Tilde{F}(v))\,dt - \Delta \Tilde{\Phi}(v)\,dt  = \Tilde{\phi}(v)\,dW(t) + \int_{E} \Tilde{\nu}(v;z)\widetilde{N}(dz,dt) ~\text{in}~\Omega \times \Pi,
\end{equation} 
with initial condition $-u_0$ (resp. $-\hat{u}_0$) and $\Tilde{F}(v):= - F(-v), \Tilde{\Phi}(v):= -\Phi(-v), \Tilde{\phi}(v):= - \phi(-v)$ and $\Tilde{\nu}(v;z) = -\nu(-v;z)$.
\fi
%%%%%%%%%%%%%%%%%%%%%%
Since ${\tt v}$ (resp. ${\tt u}$) is the Young measure-valued limit of $\{u_\eps\}$ (resp. $\{u_\theta\}$), it is easily seen that $-{\tt v}$ (resp. $-{\tt u}$) is the Young measure-valued limit of $\{-u_\eps\}$ (resp.~$\{-u_\theta\}$), where $-u_\eps$ (resp. $-u_\theta$) is a weak solution to the problem \eqref{eq:-negative-u-eps} with initial condition $-u_0$ (resp. $-\hat{u}_0$). Thus, replacing ${\tt v}(t,x, \gamma)$ by $-{\tt v}(t,x, \gamma)$ and ${\tt u}(t,x, \alpha)$ by $-{\tt u}(t,x, \alpha)$ in \eqref{inq:2nd-half-kato-inequality-01} and using the fact that $a^- = (-a)^+$ for any $a \in \R$, we get
%%%%%%%%%%%%%%%%%%%%%%
\iffalse 
\begin{align}\label{eq:2nd-half-kato-0}
    0 \le &\,\mathbb{E}\Big[\int_{\Pi}\int_0^1\int_0^1\big({\tt v}^-(t,x, \gamma)-{\tt u}^-(t, x, \alpha)\big)^+\partial_t\psi(t,x)\,d\alpha\,d\gamma\,dx\,dt\Big]\notag \\
    -&\,\mathbb{E}\Big[\int_{\Pi}\int_0^1\int_0^1\Tilde{F}^+({\tt v}^-(t,x, \gamma), {\tt u}^-(t, x, \alpha) )\nabla\psi(t,x)\,d\alpha\,d\gamma\,dx\,dt\Big] +  \int_{D}\big(u_0^-(x)-\hat{u}_0^-(x) \big)^+\psi(0,x)\,dx \notag \\  
+&\,\mathbb{E}\Big[\int_{\Pi}\int_0^1\int_0^1\Tilde{\Phi}^+({\tt v}^-(t,x, \gamma), {\tt u}^-(t, x, \alpha))\Delta\psi(t,x)\,d\alpha\,d\gamma\,dx\,dt\Big] + \underset{m \rightarrow \infty}{\lim}\hat{\mathcal{F}}_{\tt u}(\psi\bar{\rho}_m)\,,
\end{align}
  
\end{align}
thanks to the identity $(-a)^- = a^+$. We re-write the \eqref{eq:2nd-half-kato-0} as follows. 
\fi 
%%%%%%%%%%%%%%%%%%%%%%%%%%%%%%%
\begin{align}\label{inq:2nd half of kato inquality} 0 \le &\,\mathbb{E}\Big[\int_{\Pi}\int_0^1\int_0^1\big({\tt v}^-(t,x, \gamma)-{\tt u}^-(t, x, \alpha)\big)^+\partial_t\psi(t,x)\,d\alpha\,d\gamma\,dx\,dt\Big]\notag \\
+&\,\mathbb{E}\Big[\int_{\Pi}\int_0^1\int_0^1\text{sgn}^+\big({\tt v}^-(t,x, \gamma) -{\tt u}^-(t, x, \alpha) \big)\big(F(-{\tt v}^-(t,x, \gamma))-F(-{\tt u}^-(t, x, \alpha))\big)\nabla\psi(t,x)\,d\alpha\,d\gamma\,dx\,dt\Big] \notag \\  
-&\,\mathbb{E}\Big[\int_{\Pi}\int_0^1\int_0^1\text{sgn}^+\big({\tt v}^-(t,x, \gamma) -{\tt u}^-(t, x, \alpha)\big)\big(\Phi(-{\tt v}^-(t,x, \gamma)) - \Phi(-{\tt u}^-(t, x, \alpha))\big)\Delta\psi(t,x)\,d\alpha\,d\gamma\,dx\,dt\Big]\notag \\  + &  \int_{D}\big(u_0^-(x)-\hat{u}_0^-(x) \big)^+\psi(0,x)\,dx + \underset{m \rightarrow \infty}{\lim}\hat{\mathcal{F}}_{\tt u}(\psi\bar{\rho}_m)\,,
%=:& \Lambda_2(v^-, u^-, \psi) + \underset{m \rightarrow \infty}{\lim}\hat{\mathcal{F}}_{\tt u}(\psi\bar{\rho}_m).
\end{align}
where
\begin{align}
   \hat{\mathcal{F}}_{\tt u}(\psi\bar{\rho}_m) &:= \,\mathbb{E}\Big[\int_{\Pi}\int_{D}\int_0^1\text{sgn}^+\big({\tt u}^+(t,y, \alpha)\big)\Big\{{\tt u}(t,y, \alpha)\partial_t\psi(t,y)\rho_m(y-x)\notag \\ &  - F({\tt u}(t,y, \alpha))\cdot\nabla_y(\psi(t,y)\rho_m(x-y)) +\Phi({\tt u}(t,y, \alpha))\Delta_y(\psi(t,y)\rho_m(y-x))\Big\}\,d\alpha\,dt\,dy\,dx\Big]\notag \\ &  + \int_{D}\text{sgn}^+(u_0^+(y))u_0(y)\psi(0,y)\bar{\rho}_m(y)\,dy\,.\notag
   \end{align}
\subsubsection{\textbf{Proof of Lemma} \ref{lem:Global-Kato}} %\label{sec:proof of lemma 3.3}
Observe that,
\begin{align}
&-\text{sgn}^+\big({\tt u}^+-{\tt v}^+\big)\big(F({\tt u}^+) - F({\tt v}^+)\big) + \text{sgn}^+\big({\tt v}^--{\tt u^-}\big)\big(F(-{\tt v^-}) - F(-{\tt u}^-)\big)= -F^+({\tt u}, {\tt v})\,, \label{identity-for-F+} \\
&  \text{sgn}^+\big({\tt u}^+-{\tt v}^+\big)\big(\Phi({\tt u}^+) - \Phi({\tt v}^+)\big) - \text{sgn}^+\big({\tt v}^--{\tt u^-}\big)\big(\Phi(-{\tt v^-}) - \Phi(-{\tt u}^-)\big)= \Phi^+({\tt u}, {\tt v})\,. \label{identity-for-phi+}
\end{align}
Combining \eqref{inq:1st  step to 1st half of kato} and \eqref{inq:2nd half of kato inquality} and using \eqref{eq:idenity-1}, \eqref{identity-for-F+} and \eqref{identity-for-phi+} in the resulting inequality, we get
\begin{align} \label{inq:global-kato-1}
0 \le&\,\mathbb{E}\Big[\int_{\Pi}\int_0^1\int_0^1\big({\tt u}(t, x, \alpha)-{\tt v}(t,x, \gamma)\big)^+\partial_t\psi(t,x)\,d\alpha\,d\gamma\,dx\,dt\Big]\notag \\
    -&\,\mathbb{E}\Big[\int_{\Pi}\int_0^1\int_0^1F^+({\tt u}(t, x, \alpha), {\tt v}(t,x, \gamma))\nabla\psi(t,x)\,d\alpha\,d\gamma\,dx\,dt\Big] \notag \\  
+&\,\mathbb{E}\Big[\int_{\Pi}\int_0^1\int_0^1\Phi^+({\tt u}(t, x, \alpha), {\tt v}(t,x, \gamma))\Delta\psi(t,x)\,d\alpha\,d\gamma\,dx\,dt\Big] + \int_{D}\big(\hat{u}_0(x)- u_0(x)\big)^+\psi(0,x)\,dx \notag \\ + &  \underset{m \rightarrow \infty}{\lim}\,\mathcal{F}_{\tt v}(\psi\bar{\rho}_m) + \underset{m \rightarrow \infty}{\lim}\,\hat{\mathcal{F}}_{\tt u} (\psi\bar{\rho}_m) =: \Lambda({\tt u},{\tt v},\psi) +  \underset{m \rightarrow \infty}{\lim}\,\mathcal{F}_{\tt v}(\psi\bar{\rho}_m) + \underset{m \rightarrow \infty}{\lim}\,\hat{\mathcal{F}}_{\tt u} (\psi\bar{\rho}_m),
\end{align}
for any $\psi \in \mathcal{D}^+([0,T] \times \mathcal{B})$. Since for large $m^\prime$, $\psi(t,x)\bar{\rho}_{m^\prime}\in \mathcal{D}^+([0,T]\times D)$, we have, from the \textbf{local Kato's inequality} cf.~Remark \ref{rem:local-kato},
\begin{align}
    \Lambda({\tt u}, {\tt v}, \psi_{m^\prime}) \ge 0, \quad \text{for sufficiently large} \quad m^\prime,\label{inq:local-kato-1}
\end{align}
where $\psi_{m^\prime}(t,x) = \psi(t,x)\bar{\rho}_{m^\prime}(x)$. Since $\mathcal{F}_{{\tt v}}$ and $\hat{\mathcal{F}}_{{\tt u}}$ are linear operators and $\bar{\rho}_{m^\prime}\bar{\rho}_m = \bar{\rho}_{m^\prime}$ for large enough $m > m ^{\prime}$, we get 
\begin{align}
    &\underset{m^\prime, m \rightarrow \infty}{\lim}\,\mathcal{F}_{\tt u} (\psi(1-\bar{\rho}_{m^\prime})\bar{\rho}_m) + \underset{m^\prime, m \rightarrow \infty}{\lim}\,\hat{\mathcal{F}}_{\tt u} (\psi(1-\bar{\rho}_{m^\prime})\bar{\rho}_m) = 0. \label{limit-operator-final}
\end{align}
Since $\psi = \bar{\rho}_{m^\prime}\psi + (1- \bar{\rho}_{m^\prime})\psi=\psi_{m^\prime} + (1- \bar{\rho}_{m^\prime})\psi$, by using \eqref{inq:local-kato-1}, \eqref{inq:global-kato-1} and \eqref{limit-operator-final}, we arrive at the following Kato's inequality: for any  $\psi \in \mathcal{D}^+([0,T] \times \mathcal{B})$ 
%%%%%%%%%%%%%%%%%%%%%%%%%%%
\iffalse 
\begin{align}
     0 \le  &\,\Lambda({\tt u}, {\tt v}, \psi\psi_{m^\prime}) + \Lambda({\tt u}, {\tt v}, (1-\psi_{m^\prime})\psi)\notag \\ & = \Lambda({\tt u}, {\tt v}, \psi) +   \underset{m \rightarrow \infty}{\lim}\,\mathcal{F}_{\tt v}(\psi(1-\bar{\rho}_{m^\prime})\bar{\rho}_m) + \underset{m \rightarrow \infty}{\lim}\,\hat{\mathcal{F}}_{\tt u} (\psi(1-\bar{\rho}_{m^\prime})\bar{\rho}_m)\notag
\end{align}
\fi 
%%%%%%%%%%%%%%%%%%%%%%%%%%%%%%
\begin{align}
  0 \le \Lambda({\tt u}, {\tt v}, \psi)\,.\label{inq:global-kato-02}  
\end{align}
 Recall that $\mathcal{B}_0 \subset D$ and $\mathcal{B}_0 \cup \big(\cup_{i=1}^{\bar{k}}\mathcal{B}_i\big)$ is a covering for $D$. Let $\{\varphi_i\}_{0 \le i \le \bar{k}}$ be the partition of unity for the above covering such that $\varphi_i \in \mathcal{D}^+(\mathcal{B}_i)$, for $0 \le i \le \bar{k}$. Let $\psi \in \mathcal{D}^+([0,T] \times \R^d)$ and $\psi_i := \psi\varphi_i,$ for $0 \le i \le \bar{k}$. From \eqref{inq:global-kato-02} and Remark \ref{rem:local-kato}, one has
\begin{align}
     0 \le \Lambda({\tt u}, {\tt v}, \psi_i), \quad \text{for} \quad 0 \le i \le \bar{k}.\label{inq:global-kato-03}
\end{align}
Thanks to the linearity of $\psi \rightarrow \Lambda(\psi)$ and summing \eqref{inq:global-kato-03} over $i$, we get
$$0 \le \Lambda({\tt u}, {\tt v}, \psi)~~~\text{for any}~~~\psi \in \mathcal{D}^+([0,T] \times \R^d).$$ This essentially proves Lemma \ref{lem:Global-Kato}.
\subsection{Uniqueness of measure-valued limit processes}\label{sec:Uniqueness of measure-valued solution}
When $\hat{u}_0 = u_0$, we follow the similar lines of argument as invoked in \cite{Bauzet-2015, Majee-2019} to conclude the following~(from the global Kato's inequality): for a.e. $t > 0$
\begin{align}
\mathbb{E}\Big[\int_{D}\int_0^1\int_0^1\big({\tt u}(t, x, \alpha)-{\tt v}(t,x, \gamma)\big)^+\,d\alpha\,d\gamma\,dx\Big] = 0.\notag
\end{align}
This implies that for a.e. $(t,x)\in \Pi$ and $\mathbb{P}$-a.s.,  ${\tt u}(t, x, \alpha)={\tt v}(t,x, \gamma)$. 
%%%%%%%%%%%%%%%%%%%%%%%%%%%%%
\iffalse 

As done in \eqref{inq:2nd half of kato inquality}, we replace ${\tt u}$ by $-{\tt u}$ and ${\tt v}$ by $-{\tt v}$ in  \eqref{eq:uniquness-1} to get
\begin{align}\label{eq:uniquness-2}
&\mathbb{E}\Big[\int_{D}\int_0^1\int_0^1\big(-{\tt u}(t, x, \alpha)+{\tt v}(t,x, \gamma)\big)^+\,d\alpha\,d\gamma\,dx\Big] = 0\notag \\
\Rightarrow &\mathbb{E}\Big[\int_{D}\int_0^1\int_0^1\big({\tt u}(t, x, \alpha)-{\tt v}(t,x, \gamma)\big)^-\,d\alpha\,d\gamma\,dx\Big] = 0.
\end{align}
for a.e. $t >0$. Note that \eqref{eq:uniquness-1} and \eqref{eq:uniquness-2} ensure the uniqueness of the measure-valued solution coming from viscous regularization.
\fi 
%%%%%%%%%%%%%%%%%%%%%%%%%%%%%%%%%%%%%%%%%%%
Moreover, it also implies that the unique measure-valued limit process is independent of the additional variables $\alpha$ and $\gamma$, say $u(t,x)$. Thus, we infer that the viscous approximations $\{u_\eps(t,x)\}$ converges weakly to $u(t,x)$ in $L^2(\Omega \times \Pi)$. Since the limit process is independent of the additional variable, one can easily conclude that 
$$ u_\eps\goto u~~~\text{in}~~~L_{\text{loc}}^p(\Omega \times \Pi)~~\text{ for any }~~1 \le p <2.$$ 
\subsection{Well-posedness of entropy solution}
The existence of an entropy solution to the problem \eqref{eq:degenerate-SPDE} is based upon the classical viscosity method, the proof of which can be directly adapted from \cite{Bauzet-2015} and \cite{Majee-2019} for bounded domain. Using the a-priori bounds \eqref{inq:apriori-bound} and strong convergence of viscous solution in $L_{\text{loc}}^p(\Omega \times \Pi)$ for any $1 \le p <2$, we follow \cite[Section 4.2]{Majee-2019} and \cite{Bauzet-2015} to conclude that viscous limit $u(t,x)$ is indeed
an entropy solution to problem \eqref{eq:degenerate-SPDE} in the sense of Definition \ref{def-1}.  For its uniqueness, 
we compare any entropy solution to the viscous solution using Kur\v{z}kov's doubling the variable technique and then pass to the limit as viscosity goes to zero (cf.~ Subsection \ref{sec:Uniqueness of measure-valued solution}). In particular, we conclude that any entropy solution is equal to the limit of the viscous approximations---which then imply uniqueness of entropy solutions of \eqref{eq:degenerate-SPDE} . As a consequence, we also have the following contraction principle.
\begin{prop}Let $u_1$ and $u_2$ be the entropy solution to \eqref{eq:degenerate-SPDE} corresponding to initial data $u_{0_1}\in L^2(D)$ and $u_{0_2} \in L^2(D)$, respectively. Then for any $t\in (0,T)$,
    \begin{align}
        \mathbb{E}\Big[\int_{D}(u_1(t) - u_2(t))^+\,dx\Big] \le \int_{D}(u_{0_1} - u_{0_2})^+\,dx. \notag
    \end{align}
\end{prop}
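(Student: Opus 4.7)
The plan is to deduce the contraction estimate directly from the global Kato's inequality (Lemma \ref{lem:Global-Kato}) by a careful choice of test function that kills the flux and degenerate terms. The key observation is that, by the well-posedness result already established, any entropy solution coincides with the (deterministic, $\alpha,\gamma$-independent) Young measure-valued limit of the associated viscous approximations. Hence, for entropy solutions $u_1$, $u_2$ with initial data $u_{0_1}$, $u_{0_2}$, the Young measure-valued limits ${\tt u}(t,x,\alpha)$ and ${\tt v}(t,x,\gamma)$ appearing in Lemma \ref{lem:Global-Kato} may be identified pointwise with $u_1(t,x)$ and $u_2(t,x)$, respectively. Thus \eqref{inq:Global-Kato} reduces to
\begin{align*}
0 \le &\,\mathbb{E}\Big[\int_{\Pi}(u_1-u_2)^+\partial_t\psi\,dx\,dt\Big]-\mathbb{E}\Big[\int_{\Pi}F^+(u_1,u_2)\cdot \nabla\psi\,dx\,dt\Big]\\
&+\mathbb{E}\Big[\int_{\Pi}\Phi^+(u_1,u_2)\Delta\psi\,dx\,dt\Big]+\int_{D}(u_{0_1}-u_{0_2})^+\psi(0,x)\,dx,
\end{align*}
valid for every $\psi\in \mathcal{D}^+([0,T]\times\R^d)$.

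The main idea is now to separate the space and time variables in $\psi$ by choosing a tensor-product test function whose spatial part is constant on $\bar D$. Concretely, since $D$ is bounded, I would pick $\phi\in \mathcal{D}^+(\R^d)$ with $\phi \equiv 1$ on an open neighborhood $U$ of $\bar D$ (possible because $U$ can be taken relatively compact in $\R^d$). Then $\nabla \phi \equiv 0$ and $\Delta \phi \equiv 0$ on $D$, so for any smooth $\chi\ge 0$ with $\chi(T)=0$, the test function $\psi(t,x):=\chi(t)\phi(x)$ lies in $\mathcal{D}^+([0,T]\times\R^d)$ and produces only the time-derivative and initial contributions:
\begin{equation*}
0\le \mathbb{E}\Big[\int_{\Pi}(u_1-u_2)^+\chi'(t)\,dx\,dt\Big]+\int_{D}(u_{0_1}-u_{0_2})^+\chi(0)\,dx.
\end{equation*}

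To conclude, I would select $\chi=\chi_\delta$ approximating the indicator $\mathbf{1}_{[0,t]}$ from above: take $\chi_\delta$ smooth, non-increasing, with $\chi_\delta \equiv 1$ on $[0,t]$, $\chi_\delta \equiv 0$ on $[t+\delta,T]$, and $\chi_\delta'$ concentrated near $t$. Sending $\delta\downarrow 0$, the first integral converges (for a.e.\ $t\in (0,T)$, by the Lebesgue differentiation theorem and the fact that $s\mapsto \mathbb{E}\big[\int_D (u_1(s)-u_2(s))^+\,dx\big]$ lies in $L^1(0,T)$) to $-\mathbb{E}\big[\int_D (u_1(t)-u_2(t))^+\,dx\big]$, while $\chi_\delta(0)\to 1$, yielding
\[
\mathbb{E}\Big[\int_{D}(u_1(t)-u_2(t))^+\,dx\Big]\le \int_{D}(u_{0_1}-u_{0_2})^+\,dx,
\]
as required. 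The only mildly delicate point is to ensure validity of the Lebesgue point argument for a.e.\ $t$; this follows from $u_1,u_2\in N_\omega^2(0,T;L^2(D))$ so that the relevant map is integrable in time, after which the estimate can be extended to all $t\in (0,T)$ by the usual continuity in time arguments, or simply retained as an a.e. statement consistent with the theorem's formulation.
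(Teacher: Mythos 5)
Your proposal is correct and follows essentially the same route the paper intends: identify each entropy solution with the (variable-independent) Young measure-valued limit of its viscous approximations via the uniqueness argument, apply the global Kato's inequality of Lemma \ref{lem:Global-Kato}, and kill the flux and degenerate terms with a test function whose spatial factor is identically $1$ on a neighborhood of $\Bar{D}$ before localizing in time. The paper leaves these details implicit ("as a consequence"), and your write-up supplies exactly the standard argument, including the correct handling of the a.e.-in-$t$ Lebesgue point issue.
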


%%%%%%%%%%%%%%%%%%%%%%%%%%%%%%%%%%%%%%%%%%%%%%%%%%%%%%%%%%%%%%%%%%%%%%%%
\vspace{0.5cm}

\noindent{\bf Acknowledgement:} 
The first author would like to acknowledge the financial support of CSIR, India.
%%%%%%%%%%%%%%%%%%%%%%%%%%%%%%%%%%%%%%%%%%%%%%%%%%%%%%%%%%%%%%%%%%%%%%%%%%%%%%%%%%%%%%%%%%

\vspace{.2cm}

\noindent{\bf Data availability:}  Data sharing does not apply to this article as no data sets were generated or analyzed during the current study.

\vspace{0.2cm}

\noindent{\bf Conflict of interest:}  The authors have not disclosed any competing interests.

 \vspace{0.5cm}

\end{document}